\definecolor{rojo}{rgb}{1,0,0}
\definecolor{blanco}{rgb}{1,1,1}
\let \o=\omega
\title{{\bf New estimates for the maximal singular integral \\
}}
\author{\Large{\Large Joan Mateu, Joan Orobitg, Carlos P\'{e}rez and Joan Verdera}}
\newtheorem{teo}{Theorem}
\newtheorem{teor}{Theorem}
\newtheorem{cor}[teo]{Corollary}
\newtheorem{lemma}[teo]{Lemma}
\newtheorem*{lemanom}{Lemma}
\theoremstyle{definition}
\newtheorem*{gracies}{Acknowledgements}
\newcommand{\Rn}{{\mathbb R}^n}
\newcommand{\C}{\mathbb{C}}
\newcommand{\R}{\mathbb R}
\newcommand{\BC}{\Rn \setminus \overline B}
\newcommand{\n}{\frac{n}{2}}
\begin{document}
\date{}

\maketitle

\begin{abstract}

In this paper we pursue the study of the problem of controlling the
maxi\-mal singular integral $T^{*}f$ by the singular integral $Tf$.
Here $T$ is a smooth homogeneous Calder\'{o}n-Zygmund singular integral
of convolution type. We consider two forms of control, namely, in
the $L^2(\Rn)$ norm and via pointwise estimates of $T^{*}f$ by
$M(Tf)$ or $M^2(Tf)$\,, where $M$ is the Hardy-Littlewood maximal
operator and $M^2=M \circ M$ its iteration. It is known that the
parity of the kernel plays an essential role in this question. In a
previous article we considered the case of even kernels and here we
deal with the odd case. Along the way, the question of estimating
composition operators of the type $T^\star \circ T$ arises. It turns
out that, again, there is a remarkable difference between even and
odd kernels. For even kernels we obtain, quite unexpectedly, weak
$(1,1)$ estimates, which are no longer true for odd kernels.  For
odd kernels we obtain sharp weaker inequalities involving a weak
$L^1$ estimate for functions in $L\,LogL$.
\end{abstract}

\section{Introduction: the model examples}

In this paper we prove new estimates for the maximal singular
integral associated with a singular integral of Calder\'{o}n-Zygmund
type. We start by considering two model examples, the Hilbert
transform and the Beurling transform. The Hilbert transform is the
linear operator defined for almost every $x \in \R$ by the principal
value integral
$$
Hf(x) = p.v.\,  \,\int_{\R} \frac{f(y)}{y-x}\,dy \,,
$$
where $f$ is a function in some $L^p(\R)\,, 1 \leq p < \infty\,,$
and the maximal Hilbert transform is
$$
H^*f(x) = \sup_{\epsilon>0}  \left| \int_{ |y-x|>\epsilon}
\frac{f(y)}{y-x}\,dy \right|\,, \quad x \in \R\,.
$$

The Beurling transform is the one complex variable analog of the
Hilbert transform, that is,
$$
Bf(z) = p.v.\, \int_{\C} \frac{f(\o)}{(\o-z)^2}\,d\o \,,
$$
where $f$ is in some $L^p(\C)\,, 1 \leq p < \infty\,,$ and the
maximal Beurling transform is
$$
\qquad B^*f(z) =  \sup_{\epsilon>0}  \left| \int_{ |\o-z|>\epsilon}
\frac{f(\o)}{(\o-z)^2}\,d\o \right|\,, \quad z \in \C \,.
$$

 Our motivation comes from classical Cotlar's
pointwise estimate
\begin{equation}\label{CotlarClassicalpointwise}
T^*(f)(x)\leq C\,\left(M(Tf)(x) + M(f)(x)\right)
\end{equation}
where $T$ is any Calder\'{o}n-Zygmund singular operator, not necessarily
of convolution type, and $M$ is the standard Hardy-Littlewood
maximal function. (See the original result in \cite{Cot} and the
modern treatment in \cite[p.~185]{GrMF}).
It has been shown in \cite{MV} that it is possible, in some cases,
to improve this estimate by removing $M(f)$ in the right hand side
of \eqref{CotlarClassicalpointwise}. For example, for the Beurling
transform one gets
\begin{equation}\label{pointwiseB}
B^*(f)\leq C\,M(Bf)\,.
\end{equation}
 It follows from the same paper that the analogous estimate for the
Hilbert transform
\begin{equation}\label{pointwiseH1}
H^*(f)\leq C\,M(Hf)
\end{equation}
does not hold. In this paper we show that the right substitute for
the inequality above is
\begin{equation}\label{pointwiseH2}
H^*(f)\leq C\,M^2(Hf)
\end{equation}
where $M^2 = M \circ M$ is the iterated Hardy-Littlewood maximal
operator.

The crucial property to derive \eqref{pointwiseB} is the even
character of the kernel defining $B$. For further developments in
this direction see \cite{MOV}, where one characterizes those even
smooth homogeneous Calder\'{o}n-Zygmund kernels for which the pointwise
estimate \eqref{pointwiseB} holds with $B$ replaced by the
convolution operator $T$ associated with the kernel. One
characterizing condition is the $L^2$ estimate,
\begin{equation} \label{L2estimateB}
 \| T^{*} f  \|_2 \leq  C \| T f \|_2\,,\quad f \in L^2(\Rn)
 \,,
\end{equation}
an apparently weaker condition. Another description is expressed in
terms of a purely algebraic condition involving the spherical
harmonics expansion of the kernel. In particular, even higher order
Riesz transforms $T$ do satisfy \eqref{pointwiseB}, with $B$
replaced by $T$.

The first purpose of this paper is to pursue this point of view in
the case of odd smooth kernels, for which the model example provided
by the Hilbert transform points towards pointwise inequalities of
the type \eqref{pointwiseH2}. The main result is given in Theorem
\ref{T} (see Section 2 below) where the pointwise inequality
\eqref{pointwiseH2}, with $H$ replaced by $T$\,, is shown to be
equivalent to the $L^2$ estimate \eqref{L2estimateB} and, as in the
even case, to a purely algebraic condition in terms of the spherical
harmonics expansion of the kernel.

The second purpose of this paper is to gain a better understanding
of why \eqref{pointwiseH1} fails and to provide appropriate sharp
substitutes. The failure of \eqref{pointwiseH1} is related to the
endpoint boundedness properties of the composition of the maximal
singular integral operator and the singular integral operator
itself. For instance, for the Hilbert transform we are referring to
the operator of the form
\begin{equation}\label{compoH}
f \rightarrow (H^* \circ H) (f)=H^*(Hf).
\end{equation}
We show in Section \ref{contraejemploHilbert} that, indeed, this
operator is \textbf{not} of weak type $(1,1)$ and, as a consequence,
\eqref{pointwiseH1} cannot hold. On the other hand, we show that
$H^* \circ H$ satisfies an ``$L\log L$" type estimate, namely, that
there is a constant $C$ such that
\begin{equation}\label{LlogLH}
\big|\{x\in\R: H^*(Hf)(x)>t\}\big| \le C\,\int_{\R}
\Phi\left(\frac{|f(x)|}{t}\right)\,dx, \quad t>0
\end{equation}
where $\Phi(t)=t\,\log(e+t)$. This estimate seems to be the right
replacement for \eqref{pointwiseH1}, because of the presence of
$M^2$ in \eqref{pointwiseH2} and because it is well known (see
\cite{P2}) that
\begin{equation}\label{endpointM^2}
\big|\{x\in\Rn: M^2 f(x)>t\}\big| \le C\,\int_{\Rn}
\Phi\left(\frac{|f(x)|}{t}\right)\,dx \,.
\end{equation}
We remark that, since  \,$\| M^2 \|_{L^{1,\infty}}=\infty$\,, the
preceding inequality is sharp.

%\comment{ \color{blue} hay que poner referencias y terminar detalles de arriba}

In fact, we show that the above ``$L\log L$" phenomenon holds for
arbitrary Calder\'{o}n-Zygmund singular integral operators, not
necessarily of convolution type. Specifically,  if \, $T_1$ and
$T_2$ \, are such operators, then $T^*_1 \circ T_2$ and even $T^*_1
\circ T^*_2$ satisfy inequalities similar to \eqref{LlogLH} (see
Theorem \ref{teoremamaximal} in Section 2 below). However, there are
special situations, always associated to even kernels, in which one
gets a weak type $(1,1)$ inequality. For example, for the Beurling
transform $B$ one has
\begin{equation*}\label{weakB*B}
\big|\{z\in\C: B^*(Bf)(z)>t\}\big| \le \frac{C}{t} \,\int_{\C}
|f(z)|\,dA(z), \quad t>0\,,
\end{equation*}
$dA$ being two dimensional Lebesgue measure. The explanation is that
even operators enjoy an extra cancellation property smoothing out
the composition.

\section{Main results}

\subsection{The pointwise estimate for odd
kernels}\label{oddkernels}

Let $T$ be a smooth homogeneous Calder\'{o}n-Zygmund singular
integral operator on $\Rn$ with kernel
\begin{equation}\label{eq1}
K(x)=\frac{\Omega(x)}{|x|^n},\quad x \in \Rn \setminus \{0\},
\end{equation}
where $\Omega$ is a (real valued) homogeneous function of degree $0$
whose restriction to the unit sphere $S^{n-1}$ is of class
$C^\infty(S^{n-1})$ and satisfies the cancellation property
$$
\int_{|x|=1} \Omega(x)\,d\sigma(x)=0,
$$
$\sigma$ being the normalized surface measure on $S^{n-1}$. Recall
that $Tf$ is the principal value convolution operator
\begin{equation}\label{eq2}
Tf(x)= p.v. \int f(x-y)\,K(y)\,dy \equiv \lim_{\epsilon\rightarrow
0} T^\epsilon f(x),
\end{equation}
where $T^\epsilon$ is the truncation at level $\epsilon$ defined by
$$
T^{\epsilon}f(x)= \int_{| y-x| > \epsilon} f(x-y) K(y) \,dy\, .
$$
It is well known that the limit in \eqref{eq2} exists for almost all
$x$ for $f$ in $L^p(\Rn), 1 \leq p < \infty$.

The operator $T$ is said to be odd (or even) if the kernel is odd
(or even), that is, if  $\Omega(-x)= - \Omega(x),\; x \in
\Rn\setminus\{0\}$ (or $\Omega(-x)= \Omega(x),\; x \in
\Rn\setminus\{0\}$).

Let $T^{*}$ be the maximal singular integral
$$
T^{*}f(x)= \sup_{\epsilon > 0} | T^{\epsilon}f(x)|, \quad x \in \Rn.
$$

Consider the problem of controlling $T^{*} f$ by $Tf$. The most
basic form of control one may think of is the $L^2$ estimate
\begin{equation} \label{eq3}
 \| T^{*} f  \|_2 \leq  C \| T f \|_2\,,\quad f \in L^2(\Rn) \,.
\end{equation}
 Another way of saying that $T^{*} f$ is dominated by  $Tf$,
much stronger, is provided by the pointwise inequality
\begin{equation}\label{eq4}
T^{*}f(x) \leq C \, M(Tf)(x),\quad x \in \Rn\,,
\end{equation}
where M denotes the Hardy-Littlewood maximal operator. A third form
of control, weaker than \eqref{eq4}, but which still implies the
$L^2$ inequality \eqref{eq3}, is given by the condition
\begin{equation}\label{eq4bis}
T^{*}f(x) \leq C \, M^2(Tf)(x),\quad x \in \Rn\,,
\end{equation}
where $M^2 = M \circ M$ is the iterated Hardy-Littlewood maximal
operator. It was shown in \cite{MV} that the Hilbert transform does
not satisfy \eqref{eq4}, but does satisfy a pointwise inequality
slightly weaker than \eqref{eq4bis} (see \cite[p.959] {MV}).

In this paper we prove that if $T$ is an odd higher order Riesz
transform, then \eqref{eq4bis} holds.  In \cite{MOV} it was shown
that even higher order Riesz transforms satisfy the stronger
inequality \eqref{eq4}. Recall that $T$ is a higher order Riesz
transform if its kernel is given by a function $\Omega$ of the form
$$
\Omega(x)=\frac{P(x)}{|x|^d}, \quad x \in \Rn \setminus \{0\},
$$
with $P$ a homogeneous harmonic polynomial of degree $d \geq 1$. If
$P(x)= x_j$, then one obtains the $j$-th Riesz transform $R_j$. If
the homogeneous polynomial $P$ is not required to be harmonic, but
has still zero integral on the unit sphere, then we call $T$ a
polynomial operator.

Condition \eqref{eq4bis} clearly implies the $L^p$ inequality
\begin{equation*} \label{eq4tris}
 \| T^{*} f  \|_p \leq  C \| T f \|_p\,,\quad f \in L^p(\Rn)\quad 1 < p \le \infty
 \,.
\end{equation*}
As we said before, the Hilbert Transform does not satisfy
\eqref{eq4}. Therefore the presence of the iterated Hardy-Littlewood
maximal operator in the case of odd kernels is in the nature of the
problem.

Our main result states that for odd operators inequalities
\eqref{eq3} and \eqref{eq4bis} are equivalent to an algebraic
condition involving the expansion of $\Omega$ in spherical
harmonics. This condition may be very easily checked in practice and
so, in particular,  we can produce extremely simple examples of odd
polynomial operators for which \eqref{eq3} and \eqref{eq4bis} fail.
For these operators no alternative way of controlling $T^{*}f$ by
$Tf$ is known. To state our main result we need to introduce a piece
of notation.

Recall that $\Omega$ has an expansion in spherical harmonics, that
is,
\begin{equation}\label{eq6}
\Omega(x) = \sum_{j=1}^\infty P_j(x), \quad x \in S^{n-1},
\end{equation}
where $P_j$ is a homogeneous harmonic polynomial of degree $j$. If
$\Omega$ is odd, then only the $P_j$ of odd degree $j$ may be
non-zero.

%The Hardy-Littlewood maximal function is
%$$
%Mf(x)= \sup_{r>0} \frac{1}{|B(x,r)|} \int_{B(x,r)} |f(y)|\,dy,
%$$
%where $B(x,r)$ is the open ball of center $x$ and radius $r$ and $f$
%is a locally integrable function.

An important role in this paper will be played by the algebra $A$
consisting of the bounded operators on $L^2(\Rn)$ of the form
$$
\lambda I + S,
$$
where $\lambda$ is a real number and $S$ a smooth homogeneous
Calder\'{o}n-Zygmund operator.

 Our main result reads as follows.

\begin{teor} \label{T}
Let $T$ be an odd smooth homogeneous Calder\'{o}n-Zygmund operator
with kernel \eqref{eq1} and assume that $\Omega$ has the expansion
\eqref{eq6}. Then the following are equivalent.
\begin{enumerate}
\item[(i)]
$$
T^{*}f(x) \leq C \, M^2(Tf)(x)\,,\quad x \in \Rn\,.
$$
\item [(ii)]
$$
 \| T^{*} f  \|_2 \leq  C \| T f \|_2\,,\quad f \in L^2(\Rn) \,.
$$
\item [(iii)]
The operator $T$ can be factorized as $T=R\circ U$, where $U$ is an
invertible operator in the algebra $A$ and $R$ is an odd higher
order Riesz transform associated to a harmonic homogeneous
polynomial $P$ which divides each $P_j$ in the ring of polynomials
in $n$ variables with real coefficients .
\end{enumerate}
\end{teor}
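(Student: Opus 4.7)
The plan is to establish the three conditions cyclically as $(i)\Rightarrow(ii)\Rightarrow(iii)\Rightarrow(i)$. The first link, $(i)\Rightarrow(ii)$, is immediate from the boundedness of $M^2$ on $L^2(\Rn)$: the pointwise bound $T^{*}f\le C\,M^2(Tf)$ yields $\|T^{*}f\|_2\le C\|M^2(Tf)\|_2\le C'\|Tf\|_2$.

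For $(iii)\Rightarrow(i)$ the main input is the pointwise inequality $R^{*}g(x)\le C\,M^2(Rg)(x)$ for any odd higher order Riesz transform $R$, which is announced in the introduction as one of the main results of the paper and will be established separately. Granted it, the factorization $T=R\circ U$ allows one to apply this bound to $g=Uf$, obtaining $R^{*}(Uf)\le C\,M^2(Tf)$. It then remains to compare $T^{*}f$ with $R^{*}(Uf)$. Writing $U=\lambda I+S$ with $S$ a smooth homogeneous Calder\'{o}n-Zygmund operator, the difference $T^{\epsilon}f-R^{\epsilon}(Uf)$ is a commutator-type error arising from the mismatch of truncations with the composition of kernels; it is controlled pointwise by $Mf+M(Sf)$ via Cotlar's inequality applied to the auxiliary CZ operators. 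Using the invertibility of $U$ in $A$, namely $U^{-1}=\mu I+V\in A$, these auxiliary maximal terms are absorbed into $M^2(Tf)$ by a second application of Cotlar, yielding the pointwise bound for $T^{*}$.

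The heart of the theorem is $(ii)\Rightarrow(iii)$, which I would argue on the Fourier side. The multiplier of $T$ takes the form
\[
m_T(\xi)\;=\;\sum_{j \text{ odd}}\gamma_j\,\frac{P_j(\xi)}{|\xi|^{j}},
\]
a degree-zero homogeneous function, smooth off the origin. The $L^2$ domination in (ii) is very rigid: by constructing test functions whose Fourier transforms concentrate sharply near a direction $\xi_0\in S^{n-1}$ at which $m_T(\xi_0)=0$, one forces every $P_j$ to vanish at $\xi_0$ as well --- otherwise $Tf$ would be small in $L^2$ while $T^{*}f$ would remain of size bounded below, contradicting (ii). Once the common vanishing of the $P_j$ on the zero set of $m_T$ in $S^{n-1}$ is established, a purely algebraic argument in $\R[x_1,\dots,x_n]$ extracts a harmonic homogeneous polynomial $P$ dividing every $P_j$; writing $P_j=P\cdot Q_j$, the multiplier factors as $m_T(\xi)=c\,(P(\xi)/|\xi|^d)\cdot m_U(\xi)$, identifying $R$ as the Riesz transform associated with $P$ and $U\in A$ with non-vanishing symbol, hence invertible in $A$.

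The main obstacle is clearly the step $(ii)\Rightarrow(iii)$: first the Fourier-side test-function argument that converts the $L^2$ domination into pointwise common vanishing of the $P_j$ on the zero set of $m_T$, and then the algebraic passage from this common vanishing on the sphere to divisibility by a single harmonic homogeneous polynomial $P$ in $\R[x_1,\dots,x_n]$. Both pieces rest on the subtle interplay between the smoothness and homogeneity of the symbol $m_T$ and the algebraic structure of odd harmonic homogeneous polynomials, and are significantly more delicate here than in the even case treated in \cite{MOV}.
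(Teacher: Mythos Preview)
Your implication $(i)\Rightarrow(ii)$ is fine and matches the paper.

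For $(iii)\Rightarrow(i)$ you take a genuinely different route from the paper. The paper does \emph{not} reduce to the Riesz estimate $R^*g\le C\,M^2(Rg)$ via the factorization and then patch up commutator errors. Instead it produces, for each $T$ satisfying $(iii)$, a single function $\gamma\in BMO(\Rn)$ with $|\gamma(x)|\le C|x|^{-n-1}$ for $|x|\ge 2$ such that $K(x)\chi_{\BC}(x)=T(\gamma)(x)$; then $T^1f(0)=\int\gamma\,Tf$, and the BMO/John--Nirenberg argument of Section~4 gives $M^2(Tf)(0)$ directly. This is done first for Riesz transforms (Section~4), then for polynomial operators (Section~5), and finally for general $T$ via a delicate compactness argument (Section~6). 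Your approach, once made precise, seems to work: with $U=\lambda I+S$ and $W:=R\circ S$ one has $T^\epsilon f-R^\epsilon(Uf)=W^\epsilon f-R^\epsilon(Sf)$, hence $T^*f\le R^*(Uf)+W^*f+R^*(Sf)$; Cotlar gives $W^*f\le C(M(Wf)+Mf)$ and $R^*(Sf)\le C(M(Wf)+M(Sf))$, and each of $Mf$, $M(Sf)$, $M(Wf)$ is $\le C\,M^2(Tf)$ after writing $f=U^{-1}(Tf)$ and using the pointwise bound $M(Vg)\le C\,M^2g$ for smooth CZ operators $V$ (this last inequality is the content of \eqref{fslogl} plus the trivial bound on the subtracted constant). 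Two points to tighten in your write--up: the error is not just $Mf+M(Sf)$ but also involves $M(Wf)$; and the absorption step hinges on $M(Vg)\le C\,M^2g$, which you should state and justify rather than hide under ``a second application of Cotlar''. What your shortcut loses is exactly what the paper needs next: the explicit identity $K(x)\chi_{\BC}(x)=T(b)(x)+S(x)\chi_B(x)$, with $b$ and the polynomial $S$ built in Sections~4--5, is the launching pad for the necessary condition.

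For $(ii)\Rightarrow(iii)$ your sketch is too vague and does not match the mechanism of the paper. There is no ``test functions concentrating near $\xi_0$'' argument. Instead, from $K\chi_{\BC}=T(b)+S\chi_B$ one gets $\|S\chi_B*f\|_2\le C\|Tf\|_2$, hence by Plancherel the pointwise multiplier inequality $|\widehat{S\chi_B}(\xi)|\le C\,|Q(\xi)|/|\xi|^{2N+1}$. Writing $\xi=r\xi_0$ and expanding $\widehat{S\chi_B}(r\xi_0)=\sum_p a_{2p+1}(\xi_0)r^{2p+1}$ as an entire series in $r$, the vanishing of $Q(\xi_0)$ forces all $a_{2p+1}(\xi_0)=0$, and an explicit computation of these Taylor coefficients (Lemma~\ref{L4}, with nontrivial combinatorics postponed to Section~8) shows inductively that $P_{2j+1}(\xi_0)=0$ for every $j$. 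This is the Zero Sets Lemma; the subsequent division argument producing the harmonic $P$ and the invertible $U$ is then carried over from the even case. The passage from a polynomial $T$ to a general $T$ requires further uniform control of the coefficients $a_{2p+1}^N$ (Lemmas~\ref{L5} and~\ref{L6}). None of this is captured by ``concentrating test functions'', and the algebraic extraction of a common harmonic divisor is not as automatic as you suggest.
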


Two remarks are in order.

%$ {\bf{Remark 2.}} We do not know whether the $L^p$ inequality
%$$
 %\| T^{*} f  \|_p \leq  C \| T f \|_p\,,\quad f \in L^p(\Rn) \,,
%$$
%is equivalent to $(ii)$ for $p \neq 2\,, \, 1<p<\infty $.

{\bf{Remark 1.}} As in \cite{MOV}, $(iii)$ can be reformulated in a
more concrete fashion as follows. Assume that the expansion of
$\Omega$ in spherical harmonics is
$$
\Omega(x) = \sum_{j=j_o}^\infty P_{2j+1}(x)\,,\,\, P_{2j_0+1} \neq
0\,.
$$
Then $(iii)$ is equivalent to the following

{\it{$(iv)$} For each $j$ there exists a homogeneous polynomial
$Q_{2j-2j_0}$ of degree $2j-2j_0$ such that $P_{2j+1}= P_{2j_0 +1
}\,Q_{2j-2j_0}$ and $\sum_{j=j_o}^\infty \gamma_{2j+1}\,\,
Q_{2j-2j_0}(\xi) \neq 0\,, \quad \xi \in S^{n-1}$.}

Here for a positive integer $j$ we have set
\begin{equation}\label{eq7}
\gamma_j = i^{-j}\, \pi^{\frac{n}{2}}\,
\frac{\Gamma(\frac{j}{2})}{\Gamma ( \frac{n+j}{2})}\,.
\end{equation}
The quantities $\gamma_j$ appear in the computation of the Fourier
multiplier of the higher order Riesz transform $R$ with kernel given
by a homogeneous harmonic polynomial $P$ of degree $j$. One has (see
\cite[p.73]{St})
$$
\widehat{Rf}(\xi)= \gamma_j\,\,\frac{P(\xi)}{|\xi|^j} \,
\hat{f}(\xi),\quad f \in L^2(\Rn)\,.
$$
Throughout this paper the Fourier transform of $f$ is $\hat{f}(\xi)=
\int f(x) e^{-i x\cdot \xi} dx \,, \,\, \xi \in \Rn\,.$

The proof that $(iii)$ and $(iv)$ are equivalent is exactly as in
\cite{MOV}.

{\bf{Remark 2.}} Condition $(iii)$ is rather easy to check in
practice. For instance,  consider the polynomial of third degree
$$
P(x)= x_1 + (n+1) \left(x_1^3 - 3 x_1 x_2^2\right)\,, \quad x \in
S^{n-1}\,,
$$
The polynomial operator associated with $P$ does not satisfy $(i)$
nor $(ii)$, because the definition of $P$ above is also the
spherical harmonics expansion of $P$ and, although $x_1$ divides the
two terms, a calculation shows that $\gamma_1 + \gamma_3
\,(n+1)\,(\xi_1^2- 3\,\xi_2^2)$ vanishes on the sphere. On the other
hand, if $-1 < \lambda < 1$ the polynomial operator associated with
the polynomial
$$
P(x)= x_1 + \lambda\, (n+1) \left(x_1^3 - 3 x_1 x_2^2\right)\,,
\quad x \in S^{n-1}\,,
$$
does satisfy $(i)$ and $(ii)$. Thus, as in the even case, we
conclude that the condition on $\Omega$ so that $T$ satisfies $(i)$
or $(ii)$ is rather subtle.

For the proofs we will rely heavily on \cite{MOV} and the reader
will be assumed to have some familiarity with that paper. The
strategy for the proof is essentially the same as in \cite{MOV}, but
two main differences arise, which require some new ideas. In the
even case, in the proof of ``$(iii)$ implies $(i)$" (``the
sufficient condition") for polynomial operators associated with a
homogeneous polynomial of degree $2N$, the differential operator
$\Delta^N$ plays an essential role. In the odd case the natural
substitute for $\Delta^N$ is a pseudo-differential operator, which
is non local. Thus one loses the support of certain functions. This
is a new difficulty which must be overcome. A second difference is
that one cannot hope to have the subtle $L^\infty$ estimates of
\cite{MOV}, which have to be replaced by $BMO$ estimates. This is,
in some sense, favorable, because proofs are simpler at some points,
just because an $L^\infty$ estimate is not possible and must be
replaced by a straight $BMO$ estimate.

We devote Sections $4$, 5 and 6 to the proof of the sufficient
condition ($(iii)$ implies $(i)$). In Section 4 we prove that the
odd higher order Riesz transforms satisfy $(i)$. Section 5 is
devoted to the proof of the sufficient condition for polynomial
operators.  The drawback of the argument used is that we lose
control on the dependence of the constants on the degree of the
polynomial. The main difficulty we have to overcome in Section 6 to
complete the proof of the sufficient condition in the general case,
is to find a second approach to the polynomial case which gives some
estimates with constants independent of the degree of the
polynomial. This allows the use of a compactness argument to finish
the proof. As in the even case, the approach in Section 4 cannot be
dispensed with, because it provides certain properties which are
vital for the final argument and do not follow otherwise.

In Section 7 we prove the necessary condition, that is, $(ii)$
implies $(iii)$. First we deal with the polynomial case. Analysing
the inequality $(ii)$ via Plancherel at the frequency side we obtain
various inclusion relations among zero sets of certain polynomials.
This requires a considerable computational effort, as in the even
case. In a second step we solve the division problem which leads us
to $(iii)$ by a recurrent argument with some algebraic geometry
ingredients . The question of independence on the degree of the
polynomial appears again, this time related to the coefficients of
certain expansions. Section 8 is devoted to the proof of the
combinatorial lemmas used in the previous sections.

\subsection{Composing maximal singular integrals with singular integrals}

In the previous section we discussed several estimates for operators
of the form $H^*\circ H$ or $B^*\circ B$. We now extend these
results by considering general Calder\'on-Zygmund singular integral
operators. Our point of view is strongly motivated by a celebrated
result of R. Coifman and C. Fefferman from the seventies, Theorem
\ref{CoifFefferman} below. We recall that $A_{\infty}$ is the class
of weights $\cup_{p\ge 1}A_p$.

\begin{teor}\label{teoremamaximal}

Let $T_1$ and $T_2$ be two the Calder\'on-Zygmund singular integral
operators.

a) If $0<p<\infty$ and  $w\in A_{\infty}$,  then there is a
constant $C$ depending on the $A_{\infty}$ constant of $w$ such
that
\begin{equation}\label{coiffeffstrongM2}
\int_{\Rn} (T^*_1\circ T_2)(f)(x)^p\, w(x) dx\leq C\,\int_{ \Rn}
(M^2(f)(x))^p \,w(x)dx,
\end{equation}
and
\begin{equation}\label{coiffeffweakM2}
\sup_{ t >0}  \frac{1}{ \Phi(\frac{1}{t}) }\,w( \{ x\in\Rn:
(T^*_1\circ T_2)(f)(x) > t \} ) \le C\, \sup_{ t
>0} \frac{1}{ \Phi(\frac{1}{t}) } w( \{ x\in \Rn:
M^{2}(f)(x)> t \} )
\end{equation}
where $\Phi(t)= t\log(e+t)$ and  $f$  is a function for which the
left hand side is finite.

b) The estimates \eqref{coiffeffstrongM2} and
\eqref{coiffeffweakM2} hold with $T^*_1 \circ T_2$ replaced by
$T^*_1 \circ T^*_2$ in the left hand side.

\end{teor}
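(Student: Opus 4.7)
The plan is to derive both \eqref{coiffeffstrongM2} and \eqref{coiffeffweakM2} from a single pointwise sharp--maximal--function estimate. Setting $S = T_1^{*}\!\circ T_2$, the goal is to prove
\begin{equation*}
M^{\#}_{\delta}(Sf)(x) \,\leq\, C\, M^{2} f(x), \qquad x\in\Rn,
\end{equation*}
for some fixed $\delta\in(0,1)$, where $M^{\#}_{\delta}g = \bigl(M^{\#}(|g|^\delta)\bigr)^{1/\delta}$. Once this is in hand, the Fefferman--Stein inequality $\|h\|_{L^p(w)}\leq C\,\|M^{\#}_{\delta}h\|_{L^p(w)}$ for $w\in A_\infty$ yields \eqref{coiffeffstrongM2}, and its weak--type rearrangement analogue adapted to the gauge $\Phi(t)=t\log(e+t)$ gives \eqref{coiffeffweakM2}; in both cases the \emph{a priori} finiteness of the left sides is exactly the hypothesis needed to run the extrapolation. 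Part (b), where $T_2$ is replaced by $T_2^{*}$, follows from the same scheme once Cotlar's inequality $T_2^{*}f \leq C(M(T_2 f)+Mf)$ is applied one extra time to handle the additional maximal operator, without introducing new structural difficulties.

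To prove the sharp maximal estimate, I would fix a cube $Q$ centred at $x_Q$ and decompose $f = f_1 + f_2$ with $f_1 = f\chi_{3Q}$. By sublinearity of $T_1^{*}$ and the choice $c_Q = T_1^{*}(T_2 f_2)(x_Q)$, the $\delta$--oscillation of $Sf$ on $Q$ splits into a local contribution coming from $f_1$ and a far contribution coming from $f_2$. For the local piece, the key input is the endpoint mapping $L\log L \to L^{1,\infty}$ of the composition $T_1^{*}\!\circ T_2$ on $3Q$, a standard Perez--type result for compositions of Calder\'{o}n--Zygmund operators with their maximal versions (in the spirit of \eqref{LlogLH} and \eqref{endpointM^2}), combined with Kolmogorov's inequality in $L^\delta$, $\delta<1$; this yields
$$
\Bigl(\frac{1}{|Q|}\int_Q |T_1^{*}(T_2 f_1)|^\delta\,dx\Bigr)^{1/\delta}
\;\leq\; C\,\|f\|_{L\log L,\, 3Q}
\;\leq\; C\, M^2 f(x_Q),
$$
where the last step is the familiar identification of the normalized Luxemburg $L\log L$ average of $f$ on $3Q$ with $M^2 f$ at any point of $3Q$.

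The far piece is the principal obstacle, because $T_2 f_2$ is not compactly supported and the $\sup_\epsilon$ defining $T_1^{*}$ couples all scales at once. My plan is to apply Cotlar's inequality to the outer operator, $T_1^{*}g \leq C\,(M(T_1 g)+Mg)$ with $g = T_2 f_2$, reducing the question to controlling the oscillation on $Q$ of $M(T_1 T_2 f_2)$ and $M(T_2 f_2)$. Since $f_2$ vanishes on $3Q$ and both $T_2$ and $T_1\!\circ T_2$ are Calder\'{o}n--Zygmund operators, the standard H\"{o}rmander smoothness of their kernels produces an oscillation bound of order $Mf(x_Q)$ on $Q$ for both $T_2 f_2$ and $T_1 T_2 f_2$; the outer maximal average is then absorbed harmlessly into $M^2 f(x_Q)$. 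Summing the local and far contributions completes the proof of the sharp maximal bound, and hence of Theorem~\ref{teoremamaximal}.
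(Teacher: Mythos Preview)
Your argument has a genuine circularity in the local piece. To bound
\[
\Bigl(\frac{1}{|Q|}\int_Q |T_1^{*}(T_2 f_1)|^\delta\,dx\Bigr)^{1/\delta}
\]
you invoke an $L\log L \to L^{1,\infty}$ estimate for the composition $T_1^{*}\circ T_2$ ``in the spirit of \eqref{LlogLH}''. But \eqref{LlogLH}, and more generally the $L\log L$ endpoint bound for $T_1^{*}\circ T_2$, is exactly Corollary~\ref{coroLlogLCZO}, which the paper deduces \emph{from} Theorem~\ref{teoremamaximal}. There is no independent ``standard P\'erez-type result'' giving this bound for an arbitrary composition of two Calder\'on--Zygmund operators; that is precisely the content being proved here. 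So the local estimate, as written, assumes the conclusion.

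The far piece has a separate problem: you assert that $T_1\circ T_2$ is a Calder\'on--Zygmund operator and invoke the H\"ormander smoothness of its kernel. For general (non-convolution) Calder\'on--Zygmund operators this composition need not be Calder\'on--Zygmund, so the kernel regularity you need is not available. Moreover, once you apply Cotlar to $T_1^{*}(T_2 f_2)$ you obtain $M(T_1 T_2 f_2)+M(T_2 f_2)$, and the constant $c_Q=T_1^{*}(T_2 f_2)(x_Q)$ you chose no longer cancels anything; controlling the oscillation of $T_1 T_2 f_2$ on $Q$ does not by itself control $M(T_1 T_2 f_2)$ on $Q$.

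The paper avoids both difficulties by never attempting a sharp-maximal estimate on the full composition. Instead it peels off $T_1^{*}$ first via the Coifman--Fefferman inequality (Theorem~\ref{CoifFefferman}), obtaining $\|T_1^{*}(T_2 f)\|_{L^p(w)}\le C\|M(T_2 f)\|_{L^p(w)}$; then applies Fefferman--Stein (Theorem~\ref{FeffermanStein}) to pass from $M(T_2 f)$ to $M^{\#}(T_2 f)$; and finally uses the single-operator pointwise estimate $M^{\#}(T_2 f)\le C\,M^2 f$ of Lemma~\ref{pointwise}. Only one Calder\'on--Zygmund operator is ever handled at the pointwise level, so no composition kernel and no $L\log L$ input are needed. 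For part (b) the extra ingredient is Cotlar's inequality $T_2^{*}f\le C(M_\delta(T_2 f)+Mf)$ together with the elementary bound $M\circ M_\delta\le C\,M$.
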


\begin{cor}\label{coroLlogLCZO}
Let $T_1$ and $T_2$ as above and let $w\in A_1$. Then there is a
constant $C$ depending on the $A_1$ constant of $w$ such that
\begin{equation}\label{LlogLCZO}
w( \{x\in\R: T^*_1\circ T_2(f)(x)>t\}) \le C\,\int_{\R}
\Phi\left(\frac{|f(x)|}{t}\right)\,w(x)dx, \quad t>0
\end{equation}
where $\Phi(t)= t\log(e+t)$.

The estimate \eqref{LlogLCZO} holds with $T^*_1 \circ T_2$
replaced by $T^*_1 \circ T^*_2$ in the left hand side.

\end{cor}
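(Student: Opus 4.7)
The corollary should follow quite directly from Theorem \ref{teoremamaximal} once one has in hand the weighted $L\log L$ endpoint estimate for the iterated Hardy--Littlewood maximal operator $M^2$. Precisely, the required ingredient is: for every $w \in A_1$ there exists $C = C(\|w\|_{A_1})$ such that
\begin{equation*}
w\bigl(\{x \in \Rn : M^2(f)(x) > t\}\bigr) \le C \int_{\Rn} \Phi\!\left(\frac{|f(x)|}{t}\right) w(x)\, dx, \qquad t > 0.
\end{equation*}
This is the natural $A_1$-weighted analogue of \eqref{endpointM^2}; it can be obtained by writing $M^2 = M \circ M$ and applying, successively, the weighted weak $(1,1)$ bound for $M$ and the $A_1$ version of Stein's $L\log L$ estimate for the maximal function. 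Closely related results are in \cite{P2}.

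Granted this, the plan is as follows. Since $A_1 \subset A_\infty$, part (a) of Theorem \ref{teoremamaximal} applies to the weight $w$, and in particular \eqref{coiffeffweakM2} holds. Fixing an arbitrary level $t>0$ and retaining only the term at $t$ on the left of that inequality gives
\begin{equation*}
\frac{w(\{T_1^* \circ T_2 f > t\})}{\Phi(1/t)} \le C \sup_{s>0} \frac{w(\{M^2 f > s\})}{\Phi(1/s)}.
\end{equation*}
Plugging the endpoint bound for $M^2$ into the right hand side and carrying out a scale analysis in $s$ (using the doubling-type and quasi-submultiplicative behaviour of $\Phi$) one sees that the supremum is dominated by $\Phi(1/t)$ times a constant multiple of $\int_{\Rn} \Phi(|f(x)|/t)\, w(x)\, dx$. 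The factor $\Phi(1/t)$ then cancels against the denominator on the left, yielding \eqref{LlogLCZO}. For the second assertion, the same chain of reasoning applies with part (b) of Theorem \ref{teoremamaximal} in place of part (a).

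The main (in fact only) delicate point in this plan is the scale comparison between the two $\Phi$-weighted suprema that arise from \eqref{coiffeffweakM2}: one must verify that, after inserting the $M^2$ endpoint, the integral $\int \Phi(|f|/t)\, w\, dx$ can be recovered with the correct power of $t$ and without spurious logarithmic losses. Everything else is a routine transfer of the unweighted statement \eqref{endpointM^2} to the $A_1$ setting and a mechanical application of Theorem \ref{teoremamaximal}.
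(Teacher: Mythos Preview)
Your overall plan matches the paper's, and you correctly identify the weighted $L\log L$ endpoint for $M^2$ as the input needed beyond Theorem~\ref{teoremamaximal}. But the step you yourself flag as delicate is where the sketch actually breaks. The assertion that
\[
\sup_{s>0}\frac{w(\{M^2 f>s\})}{\Phi(1/s)} \;\le\; C\,\Phi(1/t)\int_{\Rn}\Phi\!\left(\frac{|f|}{t}\right)w\,dx
\]
cannot hold for every $t>0$: the left side is a fixed positive number (depending on $f$ and $w$ only), while for, say, $f=\chi_E$ and $w\equiv 1$ the right side equals $C\,|E|\,\Phi(1/t)^2$, which tends to $0$ as $t\to\infty$. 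So the promised cancellation of $\Phi(1/t)$ against the denominator cannot be arranged in the way you describe.

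The paper's remedy is a one-line homogeneity reduction at the outset: since $T_1^*\circ T_2$ is positively homogeneous of degree one, replacing $f$ by $f/t$ reduces \eqref{LlogLCZO} to the case $t=1$. After that, your displayed inequality (now at $t=1$) together with the weighted $M^2$ endpoint gives
\[
w(\{T_1^*\circ T_2 f>1\}) \;\le\; C\sup_{s>0}\frac{1}{\Phi(1/s)}\int_{\Rn}\Phi\!\left(\frac{|f|}{s}\right)w\,dx,
\]
and the sole remaining observation is that $\Phi(t)\approx t(1+\log^+ t)$ is \emph{submultiplicative}: $\Phi(|f|/s)\le C\,\Phi(|f|)\,\Phi(1/s)$. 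The factor $\Phi(1/s)$ cancels inside the supremum, leaving $C\int\Phi(|f|)\,w\,dx$ with no residual scale parameter. That is the whole ``scale analysis'', and once the homogeneity reduction is made it is not delicate at all.
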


As we mentioned before, for general Calder\'on-Zygmund singular
integral operators $T_1$ and $T_2$ their composition $T_1\circ T_2$
is not of weak type $(1,1)$. This should be compared with the case
of Fourier multipliers $T_m$ when the multiplier $m$ satisfies the
classical Mihlin condition. Indeed, by classical well known results,
if $T_{m_1}$ and $T_{m_2}$ are two multipliers the composition
operators $T_{m_1}\circ T_{m_2}=T_{m_1m_2}$ is also of weak type
$(1,1)$\,, and hence is an algebra \cite [2.5.5]{GrCF}. Indeed,
suppose that $T_{m_1}$ and $T_{m_2}$ are two multiplier operators
such that each multiplier $m_j$, $j=1,2$, is bounded, belongs to
$C^{[\frac{n}{2}]+1}$ in the complement of the origin and satisfies
the classical Mihlin condition,
\begin{equation*}\label{mihlin}
\,(\partial^{\alpha}m_j)(\xi) \leq \frac{c}{|\xi|^{\alpha}} \quad
\xi\neq 0
\end{equation*}
for any $\alpha$ such that $|\alpha| \leq [\frac{n}{2}]+1$. Now
consider as above the composition operator $T_{m_1}\circ T_{m_2}$\,,
which is another multiplier operator with multiplier $m_1m_2$. Then
since $m_1 m_2$ satisfies again Mihlin's condition by the Leibnitz
rule, then $T_{m_1}\circ T_{m_2}$ is of weak type $(1,1)$. We recall
here that in either the case of the Hilbert transform (Theorem
\ref{counterHilbert}) or the Riesz transform (\cite{MV})\,, where
the multipliers are smooth, $T_{m_1}$ cannot be replaced  by
$T^*_{m_1}$ in the above result.

%
%\begin{lemma}\label{compositionmultipliers}\end{lemma}

The Beurling transform, being an even smooth Calder\'{o}n-Zygmund
operator, should enjoy an extra cancellation property that allows
for an improvement of Theorem \ref{teoremamaximal}. This can be
readily verified for the operator $ B^*\circ \overline{B}$, where
$\overline{B}$ denotes the operator whose kernel is the complex
conjugate of the kernel of the Beurling transform. Since
$\overline{B} $ is precisely the inverse of $B$, the pointwise
inequality \eqref{pointwiseB} implies immediately that $ B^*\circ
\overline{B}$ is of weak type $(1,1)$.

It turns out that the operator $B^*\circ B$ is also of  weak type
$(1,1)$, in striking contrast with the fact that $H^*\circ H$ is
not.  This is more difficult to prove and follows from the
pointwise inequality
\begin{equation}\label{compostionBeurlingpointwise}
B^*(B(f))(z)\leq C\, \left((B^2)^*(f)(z) + M(f)(z)\right)\,, \quad
z \in \C \,,
\end{equation}
because $B^2 = B \circ B$ is again a smooth Calder\'{o}n-Zygmund
singular integral operator and hence its maximal operator is of weak
type $(1,1)$. This will be shown in Section
\ref{compostionestimates}. Indeed, we prove there a more general
result which reads as follows.

\begin{teor}\label{higherorderRieszTransforms}
Let $R$ be an even higher order Riesz transform and let $T$ be an
even smooth homogeneous Calder\'{o}n-Zygmund operator. Then there
exists a smooth homogeneous Calder\'{o}n-Zygmund operator $S$ such
that
$$
R^*(T(f))(x)\le C(S^*(f)(x)+M(f)(x)) \quad x\in \Rn.
$$
The operator $S$ is defined by the identity $R\circ T=S+cI,$ \,
where $c$ is an appropriate constant.

In particular, the operator $R^{*} \circ T$ is of weak type
$(1,1)$\,.

\end{teor}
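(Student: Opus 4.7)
The plan is to construct $S$ from the Fourier side and then establish a uniform-in-$\epsilon$ pointwise bound for the truncation $R^{\epsilon}(Tf)(x)$. The composition $R\circ T$ has Fourier multiplier $m_R\cdot m_T$, a smooth homogeneous function of degree $0$ on $\Rn\setminus\{0\}$; letting $c$ be its mean over $S^{n-1}$ and $m_S:=m_R m_T-c$ defines the even smooth homogeneous Calder\'on-Zygmund operator $S$ satisfying $R\circ T=cI+S$. The pointwise inequality
$$
|R^{\epsilon}(Tf)(x)|\;\le\;C\,(S^{\ast}f(x)+Mf(x)),\qquad \epsilon>0,\ x\in\Rn,
$$
will immediately yield the weak $(1,1)$ statement for $R^{\ast}\circ T$, since both $S^{\ast}$ and $M$ are of weak type $(1,1)$.

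Fix $\epsilon>0$ and $x\in\Rn$ and split $f=f_1+f_2$ with $f_1:=f\,\chi_{B(x,2\epsilon)}$. For the far piece, an interchange of integration (justified by first replacing $T$ by $T^{\delta}$ and letting $\delta\to 0$) gives
$$
R^{\epsilon}(Tf_2)(x)=\int_{|z-x|>2\epsilon} f(z)\,J(x-z,\epsilon)\,dz,\qquad J(w,\epsilon):=\bigl(K_R^{\epsilon}\ast K_T\bigr)(w).
$$
A first-order Taylor expansion of $K_T$ about $w$, combined with the mean-zero identity $\int_{|u|<\epsilon}K_R(u)\,du=0$, yields $|J(w,\epsilon)-K_S(w)|\le C\epsilon/|w|^{n+1}$ for $|w|>2\epsilon$. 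Hence the far piece equals $S^{2\epsilon}f(x)$, which is dominated by $S^{\ast}f(x)$, modulo an error controlled by the standard bound $\epsilon\int_{|z-x|>2\epsilon}|f(z)|/|x-z|^{n+1}\,dz\le C\,Mf(x)$.

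The main obstacle is the near piece $R^{\epsilon}(Tf_1)(x)$, where $\|f_1\|_1\le C\epsilon^n Mf(x)$. For $|y-x|>4\epsilon$ the bound $|Tf_1(y)|\le C\|f_1\|_1/|y-x|^n$ gives a contribution $O(Mf(x))$ directly. The delicate range is $\epsilon<|y-x|<4\epsilon$, where $K_R(x-y)$ sits at its singular scale $\epsilon^{-n}$ and, upon returning to the $z$ variable, the singularities of $K_T(y-z)$ can lie arbitrarily close to the $R^{\epsilon}$ truncation boundary $|y-x|=\epsilon$. Rescaling $w=\epsilon\zeta$ reduces the matter to proving that the fixed-scale integral $\tilde{J}(\zeta):=\int_{1<|u|<4}K_R(u)K_T(\zeta-u)\,du$ is uniformly bounded for $|\zeta|\le 2$. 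After the substitution $v=\zeta-u$ the potentially problematic values of $\zeta$ are those for which $v=0$ lies in (or on the boundary of) the integration region; exactly there the mean-zero property of $\Omega_T$ on spheres produces the cancellation needed, and when $|\zeta|=1$ it is the evenness of $T$ that ensures the analogous half-ball integral of $K_T$ around the origin also vanishes. A Taylor expansion of $K_R(\zeta-v)$ about $v=0$ then controls the remaining integral by a constant, so that $|\tilde{J}(\zeta)|\le C$ and the near-piece contribution is bounded by $C\epsilon^{-n}\|f_1\|_1\le C\,Mf(x)$.

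Assembling the two estimates gives the desired pointwise inequality, and the weak $(1,1)$ conclusion for $R^{\ast}\circ T$ then follows from the classical weak-$(1,1)$ bounds for $S^{\ast}$ and $M$.
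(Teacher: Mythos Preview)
Your approach is genuinely different from the paper's and, with some additional care, can be made to work; in fact it seems to yield a slightly stronger statement (only evenness of $T$ is really used, not the higher-order Riesz structure of $R$). The paper proceeds algebraically: it invokes the identity $K_R\,\chi_{B^c}=R(b\,\chi_B)$ for a polynomial $b$ (proved in \cite{MOV} for even higher-order Riesz transforms), so that
\[
R^{1}(Tf)(0)=\int b\chi_B\,(R\circ T)f=\int b\chi_B\,Sf+c\int b\chi_B\,f,
\]
and then splits the $Sf$ integral into $2B$ and $(2B)^c$, using an $L^\infty$ bound for $S(b\chi_B)$ on $2B$ (Lemma~5 of \cite{MOV}) and a Taylor expansion of the kernel of $S$ off $2B$. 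Your decomposition of $f$ and direct kernel analysis avoids both of these structural lemmas; the price is that the kernel estimate for $\tilde J$ has to be done by hand.

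That kernel estimate is precisely where your sketch is too thin. You treat separately the case ``$v=0$ in the interior of the region'' (full-sphere cancellation of $\Omega_T$) and the case ``$|\zeta|=1$'' (half-ball cancellation from evenness of $K_T$), but the delicate issue is \emph{uniformity} as $|\zeta|\to 1^{\pm}$: for $|\zeta|=1\pm\eta$ the singularity $v=0$ sits at distance $\eta$ from the curved boundary $|v-\zeta|=1$, and a crude bound gives a $\log(1/\eta)$ blow-up. What actually saves you is that, writing $F(t)=\int_{\omega\cdot\hat\zeta<t}\Omega_T(\omega)\,d\sigma(\omega)$, evenness of $\Omega_T$ forces $F$ to be odd (so $F(0)=0$) and Lipschitz; then, after passing to polar coordinates $v=r\omega$, the inner spherical integral equals $F\big((r^2+|\zeta|^2-1)/(2r|\zeta|)\big)$, and a short computation with this explicit argument shows the $r$-integral is bounded uniformly in $\eta$. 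This step deserves to be written out; the phrase ``the half-ball integral vanishes'' does not by itself give the uniform bound. A second, smaller gap: in the far piece you assert $|J(w,\epsilon)-K_S(w)|\le C\epsilon/|w|^{n+1}$ without saying what pointwise formula for $K_S(w)$ you are comparing to. A cleaner route is to use $R\circ T=cI+S$ directly on $f_2$: since $f_2$ vanishes near $x$ one has $R(Tf_2)(x)=Sf_2(x)=S^{2\epsilon}f(x)$, so $R^{\epsilon}(Tf_2)(x)-S^{2\epsilon}f(x)=-\mathrm{p.v.}\int_{|u|<\epsilon}K_R(u)\,Tf_2(x-u)\,du$, and a one-line Taylor expansion of $Tf_2$ (smooth near $x$) plus $\int_{|u|<\epsilon}K_R=0$ gives the $Mf(x)$ bound.
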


\section{Some preliminaries}

\subsection{Sharp maximal operators}\label{sharp}

For $\delta>0$, let $M_\delta$ be the maximal function
$$M_\delta f(x)=M(|f|^\delta)^{1/\delta}(x)=\left (\sup_{Q\ni
x}\frac{1}{ |Q|}\int_Q |f(y)|^\delta \, dy \right )^{1/\delta}.$$
Also,  let $M^\#$ be the usual sharp maximal function  of Fefferman
and Stein \cite{FS},
$$M^\#(f)(x)=\sup_{Q\ni x}\inf_c \frac{1}{|Q|}\int_Q |f(y)-c|\, dy
\approx \sup_{Q\ni x}\frac{1}{|Q|}\int_Q |f(y)-f_Q|\, dy,  $$
where as usual $f_Q=\frac{1}{|Q|}\int_Q f(y)\, dy$ denotes the
average of $f$ over $Q$.

We also consider the following useful variant of the above sharp
maximal operator
$$M_\delta
^\# f(x)=M^\# (|f|^\delta )(x)^{1/\delta}.$$

The main inequality between these operators to be used is a version
of the classical one due to C. Fefferman and E. Stein (see \cite{Jo}
for a proof simpler than the original, or \cite[p.~148]{GrMF}).

\begin{teor}\label{FeffermanStein}
Let $w$ be an $A_{\infty}$ weight and let $\delta>0$.
\\
a) Let $0<p<\infty$\,.  Then there exists a positive constant $C$
depending on the $A_{\infty}$ condition of $w$ and $p$ such that
\begin{equation*}\label{FSpp}
\int_{\Rn} (M_\delta f(x))^p \, w(x)dx \leq C\,\int_{\Rn}
(M^\#_\delta f(x))^p \, w(x)dx,
\end{equation*}
for every function $f$ such that the left hand side is finite.

b) Let $\varphi :(0,\infty) \rightarrow (0,\infty)$ satisfy the
doubling condition . Then, there exists a constant $C$ depending
upon the $A_{\infty}$ condition of $w$ and the doubling condition
of $\varphi$ such that
\begin{equation*} \label{FSweak}
\sup_{ t >0} \varphi(t) \, w\Big( \{ y\in\Rn: M_{\delta}f(y)
>
t
 \} \Big)
\le C\, \sup_{ t >0} \varphi(t) \,w\Big( \{ y\in\Rn:
M^{\#}_{\delta}f(y)
>
t
 \} \Big)
\end{equation*}
for every function such that the left hand side is finite.
\end{teor}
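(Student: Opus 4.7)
The plan is to reduce Theorem \ref{FeffermanStein} to the classical Fefferman--Stein inequality by a simple substitution. Since $M_\delta f = M(|f|^\delta)^{1/\delta}$ and $M_\delta^\# f = M^\#(|f|^\delta)^{1/\delta}$ by construction, setting $g = |f|^\delta$ and $q = p/\delta$ turns part (a) into the standard bound $\|Mg\|_{L^q(w)} \leq C\,\|M^\# g\|_{L^q(w)}$, while part (b) becomes the corresponding weak-type comparison against $\psi(t) = \varphi(t^{1/\delta})$, which inherits doubling from $\varphi$. Thus it suffices to treat the case $\delta = 1$.

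The central tool will be a good-$\lambda$ inequality of the form
\[
|\{Mg > 2\lambda,\ M^\# g \leq \gamma\lambda\}| \leq C\,\gamma\, |\{Mg > \lambda\}|,
\]
obtained by Whitney-decomposing the open set $\{Mg > \lambda\}$ into cubes $Q_j$ and estimating on each one separately: the maximality of $Q_j$ bounds the mean of $g$ on a suitable neighbour of $Q_j$ by $\lambda$, and the condition $M^\# g(x_0) \leq \gamma\lambda$ at some point $x_0 \in Q_j$ (the only cubes that matter) controls the oscillation of $g$ on $Q_j$, leading to the desired local bound on $\{Mg > 2\lambda\} \cap Q_j$. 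The $A_\infty$ hypothesis on $w$ next upgrades this Lebesgue-measure estimate to
\[
w(\{Mg > 2\lambda,\ M^\# g \leq \gamma\lambda\}) \leq C\,\gamma^\epsilon\, w(\{Mg > \lambda\})
\]
for some $\epsilon > 0$ depending only on the $A_\infty$ characteristic of $w$.

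With the weighted good-$\lambda$ inequality in hand, part (a) will follow by multiplying by $q\lambda^{q-1}\,d\lambda$ and integrating in $\lambda \in (0,\infty)$: the resulting $\|Mg\|_{L^q(w)}^q$ term on the right can be absorbed into the left-hand side precisely because the latter is assumed finite, after fixing $\gamma$ small enough. For part (b) I will use the inclusion $\{Mg > 2t\} \subseteq \{Mg > 2t,\ M^\# g \leq \gamma t\} \cup \{M^\# g > \gamma t\}$, apply the weighted good-$\lambda$ bound to the first piece, multiply by $\psi(2t)$, take the supremum in $t$, and invoke the doubling of $\psi$ (both at scale $2$ and at scale $\gamma$) to absorb the $Mg$ term back into the left. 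The main obstacle I anticipate is the careful verification of the local estimate on each Whitney cube $Q_j$: one needs a controlled dilate of $Q_j$ to still lie outside $\{Mg > \lambda\}$, and the oscillation control coming from $M^\# g$ must translate into a measure estimate with a constant independent of $\lambda$. Once that step is secured, the integration and absorption arguments for (a) and (b) are routine.
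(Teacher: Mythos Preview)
The paper does not prove this theorem at all: it is stated in Section~3.1 as a preliminary result, with the remark that it is ``a version of the classical one due to C.~Fefferman and E.~Stein'' and references to \cite{Jo} and \cite[p.~148]{GrMF} for proofs. Your sketch is the standard good-$\lambda$ argument that one finds in those references, and it is correct as outlined; in particular the reduction to $\delta=1$, the Whitney-decomposition estimate, the $A_\infty$ upgrade, and the absorption steps for both (a) and (b) are all sound.
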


\subsection{Orlicz spaces and normalized measures}\label{orlicz}

%
%\begin{center}\end{center}

We need some few facts from the theory of Orlicz spaces that we
will state without proof.  For more information about these spaces
the reader may consult the recent book by Wilson \cite{W} or
\cite[p.~158]{GrMF}. Let $\Phi:[0,\infty)\to[0,\infty)$ be a Young
function. The $\Phi$-average of a function $f$ over a cube $Q$ is
defined to be the $L_\Phi(\mu)$ norm of $f$ with $\mu$ the
normalized measure of the cube $Q$ and it is denoted by
$\|f\|_{\Phi,Q}$. That is,
$$ \|f\|_{\Phi, Q}=%\|f\|_{\Phi(L), Q}=
\inf \{\lambda>0\,:\, \frac{1}{|Q|}\int_Q\Phi \left
(\frac{|f(x)|}{\lambda }\right )dx\leq 1\}. $$
In this paper we will consider the Young functions\, $\Phi(t)=
t\,(1+\log^+t) \approx t\,\log(e+t)$ \,and \,$ \Psi(t)=e^t-1$. The
corresponding averages will be denoted by\, $ \|\cdot\|_{\Phi,Q} =
\|\cdot\|_{L(\log L),Q}$ \,and $\|\cdot\|_{\Psi,Q} = \|\cdot\|_{\exp
L,Q}$\, respectively. We will use the following well known
generalized H\"{o}lder's inequality
\begin{equation}\label{Holder-Orlicz2}
\frac1{|Q|}\,\int_{Q}|f(x)\,g(x)|\,dx \le C\,\|f\|_{\exp
L,Q}\,\|g\|_{L(\log L),Q}.
\end{equation}
In particular, we obtain the following inequality, which will be
used later on in this article,
\begin{equation}\label{John-Nirenberg-LLogL}
\frac{1}{|Q|}\int_Q \left |b(y)-b_Q \right |f(y)\,dy\leq
C\|b\|_{BMO}\|f\|_{L(\log L),Q}.
\end{equation}
for any function $b \in BMO$ and any non negative function $f$. This
inequality follows from \eqref{Holder-Orlicz2} combined with the
classical John-Nirenberg inequality \cite{JN} for $BMO$
functions: there is a dimensional constant $c$  such that
$$
\frac{1}{  |Q| } \int_{Q} \exp ( \frac{ |b(y)- b_{Q}| }{
c\|b\|_{BMO} } )\, dy \le 2
$$
which easily implies that
$$
\|b-b_Q\|_{\exp L,Q} \le  c\, \|b\|_{BMO}.
$$
%

%See \cite{P1} or \cite[p.562]{G}.
%\comment{\bf the reference to \cite{G}
%has to be revised/commented upon since he uses the function $\Phi(t)= t\log(1+t)$ not  $t(1+\log^+t)$}.

In view of this result and its applications it is natural to define
as in \cite{P2} a maximal operator
$$ %M_\Phi f(x)=
M_{L(\log L)}f(x) = \sup_{Q\ni x}\|f\|_{L(\log L), Q},$$
where the supremum is taken over all the cubes containing $x$.
(Other equivalent definitions can be found in the literature.) We
will also use the pointwise equivalence
\begin{equation}\label{iteration}
M_{L(\log L)}f(x) \approx M^{2}f(x).
\end{equation}
This equivalence was obtained in \cite{P1} (see \cite{CGMP} for a
different argument) and it relationship with commutators of singular
integrals and $BMO$ functions was studied in \cite{P2} and
\cite{P3}. The sharp endpoint modular inequality for $M^2$, already
mentioned in \eqref{endpointM^2},  will play an important
role.

Finally, we will employ several times  the following simple
Kolmogorov inequality.  Let $0<p<q<\infty$, then there is a constant
$C=C_{p,q}$ such that for any measurable function $f$
\begin{equation*}\label{kolmogorov}
\|f\|_{L^p(Q, \frac{dx}{|Q|})}\leq C\, \|f\|_{L^{q,\infty}(Q,
\frac{dx}{|Q|})}.
\end{equation*}

\subsection{Cotlar's pointwise inequality for Calder\'on-Zygmund operators}

By a Calder\'on-Zygmund operator we mean a continuous linear
operator \\  $T:C_0^{\infty}({\mathbb R}^n)\to\mathcal{D}'({\mathbb
R}^n)$ that extends to a bounded operator on $L^2({\mathbb R}^n)$,
and whose distributional kernel $K$ coincides, away from the
diagonal, with a function $K$ satisfying the size estimate
$$
|K(x,y)|\le \frac{c}{|x-y|^n}
$$
and the regularity condition
$$
|K(x,y)-K(z,y)|+|K(y,x)-K(y,z)|\le
c\frac{|x-z|^{\epsilon}}{|x-y|^{n+\epsilon}},
$$
for some $\epsilon>0$ and whenever $2|x-z|<|x-y|$\,. The kernel of
$T$ is $K$ in the sense that
$$Tf(x)=\int_{{\mathbb R}^n}K(x,y)f(y)dy,$$
whenever $f\in C_0^{\infty}({\mathbb R}^n)$ and
$x\not\in\mbox{supp}(f)$. Let $T^{*}$ be the maximal singular
integral
$$
T^{*}f(x)= \sup_{\epsilon > 0} | T^{\epsilon}f(x)|, \quad x \in \Rn.
$$
where $T^\epsilon$ is the truncation at level $\epsilon$ defined by
$$
T^{\epsilon}f(x)= \int_{| y-x| > \epsilon} K(x,y)f(y)dy,
$$
We refer to \cite[p.~175]{GrMF} for a complete account on these
operators. In the same reference, p. 185, it can be found an
improvement of Cotlar's inequality \eqref{CotlarClassicalpointwise}
that will be useful for our estimates. It reads as follows.
\begin{teor}\label{CotlarPuntual}
Let $T$ and $T^*$ as before and let $0<\delta<1$. Then there is a
positive constant $C =C_\delta$ such that
\begin{equation}\label{CotlarRevisedClassicalpointwise}
T^*(f)(x)\leq C\,M_{\delta}(Tf)(x) + C\,Mf(x), \quad x\in \Rn.
\end{equation}
\end{teor}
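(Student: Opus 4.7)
The plan is to follow the classical route for Cotlar's inequality, but replace the appearance of the Hardy--Littlewood maximal operator acting on $Tf$ by $M_\delta(Tf)$ via Kolmogorov's inequality together with the weak $(1,1)$ boundedness of $T$.

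Fix $x\in\Rn$ and $\epsilon>0$, and split $f=f_1+f_2$ with $f_1=f\chi_{B(x,\epsilon/2)}$. The first step is a standard comparison: for every $y$ in the smaller ball $Q=B(x,\epsilon/4)$, I would prove
$$|T^{\epsilon}f(x)-Tf_{2}(y)|\le C\,Mf(x).$$
This breaks into two pieces. The contribution of the annulus $\epsilon/2<|z-x|\le\epsilon$ is bounded by $C\,Mf(x)$ using only the size estimate $|K(x,z)|\le c|x-z|^{-n}$. The remaining integral, over $|z-x|>\epsilon$, is estimated by the regularity condition $|K(x,z)-K(y,z)|\le c|x-y|^{\epsilon'}/|x-z|^{n+\epsilon'}$ and a dyadic decomposition in $|z-x|$, which produces a geometric series summing to $C\,Mf(x)$.

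Since $Tf_{2}(y)=Tf(y)-Tf_{1}(y)$, the previous display gives
$$|T^{\epsilon}f(x)|\le |Tf(y)|+|Tf_{1}(y)|+C\,Mf(x),\qquad y\in Q.$$
Raise to the power $\delta\in(0,1)$ using subadditivity $(a+b+c)^{\delta}\le a^{\delta}+b^{\delta}+c^{\delta}$ and average over $y\in Q$:
$$|T^{\epsilon}f(x)|^{\delta}\le \frac{1}{|Q|}\int_{Q}|Tf(y)|^{\delta}\,dy+\frac{1}{|Q|}\int_{Q}|Tf_{1}(y)|^{\delta}\,dy+C\,Mf(x)^{\delta}.$$
The first summand is at most $M_{\delta}(Tf)(x)^{\delta}$ by the very definition of $M_{\delta}$. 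For the second, invoke Kolmogorov's inequality with exponents $\delta<1$, together with the well-known fact that every Calder\'on--Zygmund operator $T$ is of weak type $(1,1)$ (see \cite[p.~175]{GrMF}): this yields
$$\left(\frac{1}{|Q|}\int_{Q}|Tf_{1}|^{\delta}\,dy\right)^{1/\delta}\le C\,\|Tf_{1}\|_{L^{1,\infty}(Q,dy/|Q|)}\le \frac{C}{|Q|}\|f_{1}\|_{1}\le C\,Mf(x),$$
where the last step uses that $f_{1}$ is supported in $B(x,\epsilon/2)$ and $|Q|\simeq |B(x,\epsilon/2)|$.

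Taking the $(1/\delta)$-root of the averaged inequality and combining these bounds gives $|T^{\epsilon}f(x)|\le C\,M_{\delta}(Tf)(x)+C\,Mf(x)$; taking the supremum over $\epsilon>0$ yields \eqref{CotlarRevisedClassicalpointwise}. The only step requiring genuine work is the kernel-regularity comparison in the first paragraph; the rest is bookkeeping plus Kolmogorov's inequality, and the gain over the classical version of Cotlar's inequality comes precisely from replacing $L^{1}$-averages of $Tf_{1}$ by $L^{\delta}$-averages, which in turn permits $M_{\delta}$ to appear on the right-hand side.
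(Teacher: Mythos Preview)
Your argument is correct and is precisely the standard proof of this refinement of Cotlar's inequality. The paper does not supply its own proof of Theorem~\ref{CotlarPuntual}; it simply quotes the result from \cite[p.~185]{GrMF}, and the argument you have written is essentially the one found there. One cosmetic remark: in the annulus $\epsilon/2<|z-x|\le\epsilon$ the kernel that actually appears is $K(y,z)$ (or, if you first compare $T^{\epsilon}f(x)$ with $Tf_{2}(x)$ and only afterwards pass to $y$, it is indeed $K(x,z)$); either way $|y-z|\simeq|x-z|\simeq\epsilon$ on that annulus, so the size bound yields $C\,Mf(x)$ as you claim.
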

Observe that by Jensen's inequality,
\eqref{CotlarRevisedClassicalpointwise} is an improvement of
\eqref{CotlarClassicalpointwise}. Also, it should be mentioned that
A. Lerner has improved this estimate in \cite{Le2}.

The ideas leading to Cotlar estimate
\eqref{CotlarRevisedClassicalpointwise} were crucial to derive the
good-$\lambda$ inequality relating $T^*$ and $M$ found by R. Coifman
and C. Fefferman in \cite{CoF}. In particular we will use in Section
\ref{compostionestimates} the following result.

\begin{teor}\label{CoifFefferman} Let $T$ be any Calder\'{o}n-Zygmund
operator. Then

a)  If $0<p<\infty$ and $w\in A_\infty$, then there exists a
positive constant $C$ depending upon the $A_{\infty}$ condition of
$w$ such that
\begin{equation}\label{strongcoifman-fefferman}
\int_{\Rn} |T^*f(x)|^p\, w(x)\,dx \le C\,\int_{\Rn} Mf(x)^p\,
w(x)\,dx.
\end{equation}

b) Let $\varphi :(0,\infty) \rightarrow (0,\infty)$ doubling. Then,
there exists a positive constant $C$ depending upon the $A_{\infty}$
condition of $w$ and the doubling condition of $\varphi$ such that
\begin{equation*} \label{weakcoifman-fefferman}
\sup_{ t >0} \varphi(t) w( \{ y\in\Rn: |T^*f(x)|
>
t
 \} )
\le C\, \sup_{ t >0} \varphi(t) w( \{ y\in\Rn: Mf(x)
>
t
 \} )
\end{equation*}

\end{teor}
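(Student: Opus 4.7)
The plan is to derive both statements from the Fefferman--Stein sharp maximal function inequality (Theorem \ref{FeffermanStein}), by reducing matters to the pointwise bound
$$M^{\#}_{\delta}(T^{*}f)(x)\le C\,Mf(x), \quad x\in\Rn,$$
for some fixed $\delta\in(0,1)$. Once this is established, part (a) follows from Theorem \ref{FeffermanStein} a): since $|T^{*}f|\le M_\delta(T^{*}f)$ almost everywhere, one obtains $\|T^{*}f\|_{L^p(w)}\le \|M_\delta(T^{*}f)\|_{L^p(w)}\le C\,\|M^{\#}_\delta(T^{*}f)\|_{L^p(w)}\le C\,\|Mf\|_{L^p(w)}$. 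Part (b) follows analogously from Theorem \ref{FeffermanStein} b) applied with the same doubling function $\varphi$, once one checks that $M_\delta$ commutes with taking $\varphi$-weighted weak-type norms up to a constant.

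To prove the pointwise sharp-maximal bound, I would fix a cube $Q\ni x$, denote its centre by $x_Q$, and decompose $f=f_1+f_2$ with $f_1=f\chi_{2Q}$. Choosing the constant in the definition of $M^\#(|T^{*}f|^\delta)$ over $Q$ to be $c=|T^{*}f_2(x_Q)|^\delta$, and using the subadditivity $\bigl||a|^\delta-|b|^\delta\bigr|\le |a-b|^\delta$ valid for $\delta\in(0,1)$, it suffices to bound by $C\,Mf(x)$ the two averages
$$\Bigl(\frac{1}{|Q|}\int_Q|T^{*}f_1(y)|^\delta\,dy\Bigr)^{1/\delta} \quad\text{and}\quad \Bigl(\frac{1}{|Q|}\int_Q|T^{*}f_2(y)-T^{*}f_2(x_Q)|^\delta\,dy\Bigr)^{1/\delta}.$$
For the first, I would apply Kolmogorov's inequality from Section \ref{orlicz} together with the weak $(1,1)$ boundedness of $T^{*}$ to get a dominating constant times $|2Q|^{-1}\int_{2Q}|f|\le C\,Mf(x)$. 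For the second, I would use the regularity of the kernel off the diagonal: for each truncation level $\ep>0$ and each $y\in Q$, the truncations $T^\ep f_2(y)$ and $T^\ep f_2(x_Q)$ only involve points at distance at least $c\,\ell(Q)$ from $Q$, so an annular decomposition around $x_Q$ combined with the H\"older continuity of $K$ yields $|T^\ep f_2(y)-T^\ep f_2(x_Q)|\le C\,Mf(x)$ uniformly in $\ep$ and $y\in Q$; taking $\sup_\ep$ preserves this estimate and hence the second average is also controlled by $C\,Mf(x)$. Combining the two pieces and taking the supremum over cubes $Q\ni x$ yields the desired pointwise inequality.

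The main technical point is the use of the weak $(1,1)$ boundedness of $T^{*}$ itself in the $f_1$-estimate: although Cotlar's inequality (Theorem \ref{CotlarPuntual}) dominates $T^{*}$ pointwise by $M_\delta(Tf)+Mf$, the operator $M_\delta$ is not of weak type $(1,1)$, so one cannot derive weak $(1,1)$ of $T^{*}$ directly from this inequality. Instead one invokes the classical fact, proved via an independent Calder\'on--Zygmund decomposition argument, that the maximal singular integral of any Calder\'on--Zygmund operator is itself of weak type $(1,1)$. Finally, as is standard in Fefferman--Stein-type arguments, the inequalities are first established under the finiteness hypothesis on the left-hand side built into Theorem \ref{FeffermanStein}, with the general case obtained by approximation (for instance via the truncated operators $T^\ep$, which are uniformly bounded Calder\'on--Zygmund operators).
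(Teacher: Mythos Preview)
The paper does not actually prove Theorem~\ref{CoifFefferman}; it is quoted as a known result of Coifman and Fefferman \cite{CoF}, whose original argument proceeds via a good-$\lambda$ inequality between $T^*$ and $M$. The paper also points to an alternative route from \cite{AP}, combining the Fefferman--Stein inequality (Theorem~\ref{FeffermanStein}) with the pointwise estimate \eqref{ap} for $T$ (not $T^*$), together with Cotlar's inequality to pass from $T$ to $T^*$. Your plan is a close relative of this second approach: instead of going through Cotlar and \eqref{ap}, you establish the sharp-maximal bound $M^{\#}_\delta(T^*f)\le C\,Mf$ directly for the maximal operator. This is a perfectly valid strategy and yields both parts (a) and (b) in one stroke via Theorem~\ref{FeffermanStein}; it has the advantage of avoiding the intermediate passage through $Tf$, at the cost of requiring the weak $(1,1)$ bound for $T^*$ as a black box (which you correctly flag).

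One technical point deserves more care. In estimating $|T^{\ep}f_2(y)-T^{\ep}f_2(x_Q)|$ you invoke only the H\"older continuity of the kernel, but the two truncated integrals are taken over different regions $\{|z-y|>\ep\}$ and $\{|z-x_Q|>\ep\}$. When $\ep$ is large compared to $\ell(Q)$ these regions differ on an annular shell of thickness $\sim\ell(Q)$ at radius $\sim\ep$, and the size bound $|K(x_Q,z)|\le C\,\ep^{-n}$ on that shell, together with $\int_{|z-x_Q|\le 2\ep}|f|\le C\,\ep^n Mf(x)$, is needed to control the resulting boundary term. This is standard and does give the uniform bound $C\,Mf(x)$, but as written your sentence suggests kernel regularity alone suffices, which is not quite the case.
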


Also we will use a local versi\'{o}n of \eqref{strongcoifman-fefferman}
in the proof of \eqref{fslogl} in Lemma \ref{pointwise} below: if
$0<p<\infty$, $w\in A_\infty$, and  $Q$ is an arbitrary cube, then
there exists a constant $C$ depending upon the $A_{\infty}$
condition of $w$ such that
\begin{equation}\label{localstrongcoifman-fefferman}
\int_{2Q} |T^*f(x)|^p\, w(x)\,dx \le C\,\int_{2Q} Mf(x)^p\,
w(x)\,dx,
\end{equation}
for any function $f$ supported in $ Q$. The proof of this estimate
is an adaptation of the proof in \cite{CoF} by considering
everything at local level. However, it should be mentioned that a
different approach to the above theorem, which may be found in
\cite{AP}, yields the local version.  It is based on the combination
of the well known good-$\lambda$ inequality of Fefferman-Stein
Theorem \ref{FeffermanStein}, which is much simpler, and the
pointwise estimate \eqref{ap} from next lemma which will be used in
the paper. This procedure has been applied in \cite{LOPTT} into the
context of multilinear Calder\'{o}n-Zygmund singular integral operators
to derive sharp results.

\begin{lemma}\label{pointwise}
If $T$ is a Calder\'on-Zygmund singular operator,  then
\begin{equation} \label{fslogl}
M^{\#}(Tf) (x) \le C\, M^2(f)(x)\,,
\end{equation}
and, for $0<\delta <1$\,,
\begin{equation} \label{ap}
M^{\#}_{\delta}(Tf) (x) \le C_{\delta}\, Mf(x)\,.
\end{equation}
\end{lemma}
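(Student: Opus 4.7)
The plan is to treat both estimates via the classical Calder\'on--Zygmund splitting. Fix $x\in\Rn$ and an arbitrary cube $Q$ containing $x$; decompose $f=f_1+f_2$ with $f_1=f\chi_{2Q}$ and $f_2=f\chi_{(2Q)^c}$, and take the reference constant $c:=Tf_2(x_Q)$, where $x_Q$ is the center of $Q$. For any $y\in Q$ we then have
$$Tf(y)-c=Tf_1(y)+\bigl(Tf_2(y)-c\bigr).$$
The ``far'' piece is controlled pointwise on $Q$ by the standard kernel-regularity dyadic-annulus calculation on $(2Q)^c$, giving
$$|Tf_2(y)-c|\le C\sum_{k\ge 1}2^{-k\epsilon}\,\frac{1}{|2^{k+1}Q|}\int_{2^{k+1}Q}|f|\le C\,Mf(x),\qquad y\in Q.$$
This estimate serves \emph{both} \eqref{ap} and \eqref{fslogl}: in either case the non-local part contributes at most $CMf(x)\le CM^2f(x)$ to the sharp maximal function, so the remaining task is to handle the local piece $Tf_1$.

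For \eqref{ap}, exploit the elementary inequality $\bigl||a|^\delta-|b|^\delta\bigr|\le |a-b|^\delta$ (valid for $0<\delta\le 1$) to reduce $M^{\#}_{\delta}(Tf)(x)$ to bounding $\bigl(\frac{1}{|Q|}\int_Q|Tf-c|^\delta\bigr)^{1/\delta}$. The non-local contribution has just been estimated; for the local contribution, since $\delta<1$ one may apply Kolmogorov's inequality on $(Q,dy/|Q|)$ and invoke the weak $(1,1)$ bound for $T$ to get
$$\Bigl(\frac{1}{|Q|}\int_Q|Tf_1|^\delta\,dy\Bigr)^{1/\delta}\le C_\delta\,\|Tf_1\|_{L^{1,\infty}(Q,\,dy/|Q|)}\le \frac{C}{|Q|}\int_{2Q}|f|\le C\,Mf(x).$$
Summing yields \eqref{ap}.

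For \eqref{fslogl}, the splitting and the treatment of $Tf_2-c$ are identical, but now the local piece must be bounded by $M^2f(x)$ rather than by $Mf(x)$, and Kolmogorov's inequality is unavailable at the endpoint $\delta=1$. The appropriate substitute is a localized $L(\log L)$ estimate: using the sharp operator-norm blow-up $\|T\|_{L^p\to L^p}\le C/(p-1)$ as $p\to 1^+$ (Yano extrapolation), or equivalently pairing the generalized H\"older inequality \eqref{Holder-Orlicz2} with an $\exp L$ local estimate for $Tf_1$, one derives
$$\frac{1}{|Q|}\int_Q|Tf_1|\,dy\le C\,\|f\|_{L(\log L),\,2Q}\le C\,M_{L(\log L)}f(x),$$
and \eqref{iteration} then identifies the right-hand side with $M^2f(x)$ up to a constant. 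The main obstacle is precisely this local $L(\log L)\to L^1$ bound for $Tf_1$: it is the only place where the jump from $M$ to $M^2$ in passing from \eqref{ap} to \eqref{fslogl} enters the proof, and it reflects the fact that $T$ is only weak $(1,1)$ and not strong $(1,1)$, so one logarithmic loss on the side of $f$ is unavoidable.
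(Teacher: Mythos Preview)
Your treatment of \eqref{ap} is the standard Kolmogorov\,/\,weak-$(1,1)$ argument, which is precisely what the paper cites from \cite{AP}. For \eqref{fslogl} your argument is also correct, but the paper handles the local term differently. Rather than appealing to Yano extrapolation and the local $L\log L\to L^1$ mapping property of $T$, the paper invokes the \emph{local} Coifman--Fefferman inequality \eqref{localstrongcoifman-fefferman} (with $w\equiv 1$, $p=1$, applied to $f_1$ supported in $2Q$) to obtain
\[
\frac{1}{|Q|}\int_{Q}|Tf_1|\;\le\;\frac{C}{|Q|}\int_{4Q}|Tf_1|\;\le\;\frac{C}{|Q|}\int_{4Q} M f_1\;\le\;\frac{C}{|4Q|}\int_{4Q} Mf\;\le\; C\,M^2f(x).
\]
The two routes are closely related---by Stein's theorem the local $L^1$ norm of $Mf_1$ is comparable to $\|f\|_{L\log L,\,2Q}$, so the paper's chain is essentially the Coifman--Fefferman machinery specialized to the endpoint, while yours names the logarithmic loss directly via extrapolation. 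The paper's version has the virtue of using only tools already assembled in the preliminaries; yours is more self-contained in that it does not rely on \eqref{localstrongcoifman-fefferman}.

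One small point: your aside ``or equivalently pairing \eqref{Holder-Orlicz2} with an $\exp L$ local estimate for $Tf_1$'' is not right as written, since $Tf_1$ need not lie in $\exp L$ for general $f_1$. The duality version that \emph{does} work is to write $\int_Q|Tf_1|=\int_{2Q} f\cdot(\text{adjoint of }T)(h)$ with $h=\operatorname{sgn}(Tf_1)\chi_Q\in L^\infty$, and then use that a CZO applied to an $L^\infty$ function is in $\mathrm{BMO}$, hence locally in $\exp L$ by John--Nirenberg; \eqref{Holder-Orlicz2} then gives the desired $L\log L$ bound on $f$. Either this or the extrapolation argument completes your proof.
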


%\comment{\color{blue} Me dijo Wheeden que esto \'{u}ltimo se lo ense\~{n}\'{o} C. Fefferman al poco de ellos haber probado su articulo famoso con
%Hunt-Mukenhoupt para la Hulbert an una visita que hizo a Rutgers. Dice que se qued\'{o} feffermanizado, t\'{e}rmino acu\~{n}ado en esa \'{e}poca}

It is well known that inequality \eqref{fslogl} holds with the right
hand side replaced by the larger operator $M_p(f)$ with $p>1$ (see,
for instance,  \cite[p.~153]{GrMF}). However, this is not sharp
enough for many purposes and an excellent alternative is given by
\eqref{ap} which can be found in \cite{AP}. We sketch the proof of
inequality \eqref{fslogl} in Section \ref{compostionestimates}.

\section{Odd higher order Riesz transforms}
In this section we prove that if $T$ is an odd higher order Riesz
transform, then
\begin{equation}\label{eq8}
T^{*}f(x) \leq C \, M^{2}(Tf)(x),\quad x \in \Rn\,,
\end{equation}
By translating and dilating one reduces the proof of \eqref{eq8} to
\begin{equation*}\label{eq9}
|T^1 f(0)| \leq C \, M^{2}(Tf)(0)\,,
\end{equation*}
where
$$T^1 f(0)= - \int_{| y| > 1} f(y) K(y) \,dy $$
 is the truncated integral at level $1$. Recall that the kernel of our singular integral is
$$
K(x)= \frac{\Omega(x)}{|x|^{n}}= \frac{P(x)}{|x|^{n+d}}\,,
$$
where $P$ is an odd homogeneous harmonic polynomial of degree $d\geq
1$. The argument proceeds along the lines of the even case, but, as
we said above,  two important differences arise. The first is that,
for odd $d$, $(-\Delta )^{d/2}$ is not a differential operator and
this complicates the situation. We will work with the
pseudo-differential operator $(-\Delta )^{1/2} \Delta^N$ , where $d
= 2N + 1$.  The definition of $(-\Delta )^{1/2}$ on test functions
$\Psi$  is $(-\Delta )^{1/2}\Psi =\sum_{j=1}^{n}
R_{j}(\partial_{j}\Psi)$, where the $R_{j}$ are the Riesz transforms
normalized so that $\widehat{R_{j}\Psi}(\xi) = -i \xi_{j}/
|\xi|\,\Psi(\xi)\,.$  The kernel of $R_j$ is then $\rho
\,x_{j}|x|^{-n-1}\,,$ where $\rho $ is a constant which depends only
on the dimension $n$ and whose concrete value is irrelevant in this
paper. On the Fourier transform side we then have $\widehat{(-\Delta
)^{1/2}\Psi} (\xi) = |\xi| \widehat{\Psi}(\xi)$.
 The idea is to obtain an identity of the form
\begin{equation}\label{eq10}
K(x)\chi_{\BC}(x)=  T(b)(x)\,,
\end{equation}
where $B$ is the open ball of radius $1$ centered at the origin and
$b$ is a certain function. To this end, consider a fundamental
solution of $(-\Delta )^{1/2} \Delta^N$, that is, a function $E$
such that $(-\Delta )^{1/2} \Delta^N E=\delta$, where $\delta$ is
the Dirac delta at the origin. One can take $E$ as a solution of
$\Delta^N E = c_{n}/|x|^{n-1}$, where the constant $c_{n}$ is chosen
so that $\widehat{c_{n}/|x|^{n-1}}(\xi)=1/|\xi |$. The formula $c_{n}=\displaystyle\frac{\Gamma(\frac{n-1}{2})}{2\pi^{n/2}\Gamma(\frac{1}{2})}$ wil be used in Section 8. Notice that $E$ can
always be taken to be radial (see Section 8 for a precise
expression). Consider the function
\begin{equation}\label{eq11}
\varphi(x)= E(x)\,\chi_{\BC}(x) + (A_0+A_1\,|x|^2 +...+
A_{2N}\,|x|^{4N})\,\chi_B(x)\,,
\end{equation}
where the constants $A_0, A_1,..., A_{2N}$ are chosen so that the
derivatives of $\varphi$ up to order $2N$ extend continuously to the
boundary of $B$.  Then, in computing the distributional derivatives
of $\varphi$, one can apply $2N+1$ times Green-Stokes' Theorem and
the boundary terms will vanish. This is most conveniently done by
applying $N$ times Corollary 1 and one time Lemma 1 in \cite{MOV}.
The conclusion is that, for some constants $\alpha_j$ and $\beta_k$,
\begin{equation}\label{eq11bis}
\begin{split}
(-\Delta)^{1/2}\Delta^{N} \varphi  & = (-\Delta)^{1/2}\left(
\frac{c_{n}}{|x|^{n-1}}
\chi_{B^{c}}(x)  + ( \alpha_{0} +\alpha_{1}|x|^2 + \dots +\alpha_{N}|x|^{2N}  )  \chi_{B}(x) \right)  \\
 = \sum_{j=1}^{n} R_{j} & \left( c_{n}\,(1-n)\,\frac{x_{j}}{|x|^{n+1}}
\chi_{B^{c}}(x)  +  ( \beta_{1}x_{j}+ \beta_{2}x_{j} |x|^2 +\dots + \beta_{N}x_{j}|x|^{2N-2})   \chi_{B}(x) \right)  \\
& : = b(x),
\end{split}
\end{equation}
where the last identity is a definition of $b$. Since
$$
\varphi = E * (-\Delta )^{1/2} \triangle^N\,\varphi \,,
$$
taking derivatives of both sides we obtain
\begin{equation*}\label{eq12}
P(\partial)\,\varphi = P(\partial)\, E * (-\Delta )^{1/2}
\triangle^N \, \varphi\,.
\end{equation*}
To compute $P(\partial)E$ we take the Fourier transform
$$
\widehat{P(\partial)E}(\xi)= P(i\xi)\, \hat{E}(\xi)=i
\frac{P(\xi)}{|\xi|^d}\,.
$$
On the other hand, as it is well known (\cite[p.~73]{St}),
$$
\widehat{p.v.\frac{P(x)}{|x|^{n+d}}}\,(\xi)= \gamma_d
\,\frac{P(\xi)}{|\xi|^d}\,.
$$
See \eqref{eq7} for the precise value of $\gamma_d$, which is not
important now. We conclude that, for some constant $a_d$ depending
on $d$,
$$
P(\partial)E = a_d \,\,p.v.\frac{P(x)}{|x|^{n+d}}\,.
$$
Thus
$$
P(\partial)\varphi = a_d \,\, p.v.\frac{P(x)}{|x|^{n+d}}*\, (-\Delta
)^{1/2} \triangle^N\,\varphi =
 a_d\,\,T(b)\,.
$$
The only thing left is the computation of $P(\partial)\,\varphi$. We
have, by Corollary 1 in \cite{MOV},
\begin{equation*}
\begin{split}
P(\partial)\,\varphi  & = a_d \,\, K(x)\,\chi_{\BC} +
P(\partial)(A_0+A_1\,|x|^2+...+ A_{d-1}\,|x|^{2d-2})(x)\,\chi_B(x)\\
& = a_d\,\, K(x)\,\chi_{\BC}\,,
\end{split}
\end{equation*}
where the last identity follows from the fact that, since $P$ is
harmonic,
\begin{equation}\label{eq13}
P(\partial)(|x|^{2j})= 0\,,\quad 1 \leq j \leq d-1.
\end{equation}
The identity \eqref{eq13} is a special case of a formula of Lyons
and Zumbrun \cite{LZ} which will be discussed in the next section
(see Lemma 2).

Once \eqref{eq10} is at our disposition  we get, for $f$ in some $
L^p(\Rn)\,,\, 1\leq p <\infty$\,,
\begin{equation*}
\begin{split}
T^1 f(0) & = - \int \chi_{\BC}(y)\,K(y)\, f(y) \,dy \\
 & = -\int T(b)(y)\,f(y)\,dy \\
 &= \int b(y) \, Tf(y)\,dy  \\
 &= \int_{2B} Tf(y) b(y) dy  \, + \int_{\Rn \setminus 2B} Tf(y) b(y) dy \\
 &= \int_{2B} Tf(y) (b(y) -b_{2B})dy  \, +b_{2B}\,\int_{2B} Tf(y) dy  \,+ \int_{\Rn \setminus 2B} Tf(y) b(y) dy \\
  & = I + II +III.
\end{split}
\end{equation*}
Notice that $b_{2B}$ is a dimensional constant, because of the
definition of $b$. In particular, it is independent of $f\,.$ Hence
$$
|II| \leq C\, M(Tf)(0)
$$
To estimate the local term $I$ we remark that $b\in BMO(\Rn)\,.$
This follows from the fact that $b$ is a sum on $j$ of the $j$-th
Riesz transform of a bounded function (depending on $j$). Hence we
can apply \eqref{John-Nirenberg-LLogL} to get
\begin{equation*}
\begin{split}
|I| & \le C\|b\|_{BMO}\|Tf\|_{L(\log L),2B} \\
   &\le C\,\|Tf\|_{L(\log L),2B}\\
   & \le C\,M^2(Tf)(0),
\end{split}
\end{equation*}
where we have used \eqref{iteration} in the last inequality.

To estimate the term $III$ we first prove the decay estimate
\begin{equation}\label{decay1}
|b(x)| \le \frac{C}{|x|^{n+1}}, \qquad |x| > 2.
\end{equation}
>From the decay of $b$ we obtain
$$
III \le C \int_{|x|>2}|Tf(x)| \frac{1}{|x|^{n+1}} dx\le CM(Tf)(0),
$$
using a standard argument which consists in estimating the integral
on the annuli $\{2^k \le |x| < 2^{k+1}  \}$. Let us prove
\eqref{decay1}. From the definition of $b$ we see that $b=b_{1}
+b_{2}$, where
\begin{equation*}
\begin{split}
b_{1} & =  c_{n}(1-n)  \sum_{j=1}^{n} R_{j} \left( \frac{x_{j}}{|x|^{n+1}} \chi_{B^{c}}(x) \right) \\
\text{and} &   \\
b_{2 } &= \sum_{j=1}^{n} R_{j} (a_{j})\\
\end{split}
\end{equation*}
each $a_{j}$ being a bounded function supported on $B$ with zero
integral (indeed, $a_j$ is odd).

If $|x|> 2$, then, since the kernel of $R_j$ is $\rho
\,x_{j}|x|^{-n-1}\,, $\,\,for some numerical constant $\rho$
depending on $n$,
\begin{equation}\label{eq15b}
\begin{split}
R_j(a_j)(x) &= \rho\, \int_{|y|< 1} \frac{x_j-y_j}{|x-y|^{n+1}}\,\,a_j(y)\,dy \\
& = \rho \,
\int_{|y|<1}(\frac{x_j-y_j}{|x-y|^{n+1}}-\frac{x_j}{|x|^{n+1}})\,\,a_j(y)\,dy\,.
\end{split}
\end{equation}
Thus
$$
|R_j(a_j)(x)| \leq \frac{C}{|x|^{n+1}}\,, \quad |x|> 2\,,
$$
and hence $b_2$ satisfies the decay estimate \eqref{decay1} with $b$
replaced by $b_2$. That this is also the case for $b_1$  was shown
in \cite{MV}. We repeat the argument here for completeness. One has
\begin{equation*}
\begin{split}
\sum_{j=1}^{n} R_j(\frac{\rho\;  y_j}{|y|^{n+1}}\,\chi_{\BC}\,(y)) &
=
 \sum_{j=1}^{n} R_j *  R_j  -  \sum_{j=1}^{n}
 R_j(\frac{\rho\; y_j}{|y|^{n+1}}\,\chi_{B}\,(y)) \\
 & = \delta_{0} -  \sum_{j=1}^{n}
 R_j(\frac{\rho\; y_j}{|y|^{n+1}}\,\chi_{B}\,(y))\,,
\end{split}
\end{equation*}
where $\delta_{0}$ is the Dirac delta at the origin. If $|x|>2$,
then
\begin{equation*}
\begin{split}
R_j(\frac{y_j}{|y|^{n+1}}\,\chi_{B}\,(y))(x) & = \rho\,
\lim_{\epsilon\rightarrow 0}
\int_{\epsilon<|y|< 1} \frac{x_j-y_j}{|x-y|^{n+1}}\,\,\frac{y_j}{|y|^{n+1}}\,dy \\
& = \rho\, \lim_{\epsilon\rightarrow 0} \int_{\epsilon<|y|< 1}
(\frac{x_j-y_j}{|x-y|^{n+1}}-\frac{x_j}{|x|^{n+1}})\,\,\frac{y_j}{|y|^{n+1}}\,dy\,.
\end{split}
\end{equation*}
Hence
$$
|R_j(\frac{y_j}{|y|^{n+1}}\,\chi_{B}\,(y))(x)| \leq
\frac{C}{|x|^{n+1}} \int_{|y|< 1} \frac{dy}{|y|^{n-1}} \leq
\frac{C}{|x|^{n+1}}\,,
$$
which completes the proof of \eqref{decay1}.

\section{Proof of the sufficient condition: the polynomial case}
This section does not differ substantially from its analogue for the
even case. Nevertheless, we will present the argument in detail for
the reader's sake, because it is technically sophisticated and we
would like to describe clearly the changes that have to be made.

Let us assume that $T$ is an odd polynomial operator. This amounts
to say that for some odd integer $2N+1$\,, \,$N \geq 0$\,, the
function $ |x|^{2N+1}\,\Omega(x)$ is a homogeneous polynomial of
degree $2N+1$\,. Such a polynomial may be written as \cite
[p.69]{St}
$$
|x|^{2N+1}\,\Omega(x)=  P_1(x)|x|^{2N}+ ...+
P_{2j+1}(x)|x|^{2N-2j}+...+ P_{2N+1}(x)\,,
$$
where $P_{2j+1}$ is a homogeneous harmonic polynomial of degree
$2j+1$\,,\, $0\leq j \leq N $\,. In other words, the expansion of
$\Omega(x)$ in spherical harmonics is
$$
\Omega(x)= P_1(x)+P_3(x)+...+P_{2N+1}(x),\quad |x|=1\,.
$$

As in the previous section, we want to obtain an expression for the
kernel $K(x)$ off the unit ball $B$. For this we need the
differential operator $Q(\partial)$ defined by the polynomial
\begin{equation*}
 Q(x)=  \gamma_1 \, P_1(x)|x|^{2N}+ ...+ \gamma_{2j+1}\,
P_{2j+1}(x)|x|^{2N-2j}+...+ \gamma_{2N+1}\,P_{2N+1}(x) \,.
\end{equation*}
If $E$ is the standard fundamental solution of $(-\Delta )^{1/2}
\Delta^N$\,, then
$$
Q(\partial)E = i \,  p.v. \,\,K(x)\,,
$$
which may be easily verified by taking the Fourier transform of both
sides ($K$ is the kernel of $T$).

Take now the function $\varphi$ of the previous section. We have
$\varphi = E * (-\Delta )^{1/2}  \triangle^N\,\varphi$ and thus
$$
Q(\partial)\varphi = Q(\partial)E *  (-\Delta )^{1/2}
\triangle^N\,\varphi =  p.v.\,\, K(x) * b =  T(b)\,,
$$
where $b$ is defined as $ i\;(-\Delta )^{1/2} \triangle^N\,\varphi$.
On the other hand, by Corollary 2 of \cite{MOV}
\begin{equation}\label{eq14}
Q(\partial)\,\varphi =   i\, K(x)\,\chi_{\BC} +
Q(\partial)(A_0+A_1\,|x|^2+...+ A_{2N}\,|x|^{4N})(x)\,\chi_B(x)\,.
\end{equation}
Contrary to what happened in the previous section, the term
$$
S(x) : = - Q(\partial)(A_0+A_1\,|x|^2+...+ A_{2N}\,|x|^{4N})(x)
$$
does not necessarily vanish, the reason being that now $Q$ does not
need to be harmonic.

Our goal is to find a function $\beta \in BMO (\Rn)$, satisfying the
decay estimate
\begin{equation}\label{eq15}
|\beta(x)| \leq \,\frac{C}{|x|^{n+1}}\,,\quad |x|\geq 2\,,
\end{equation}
and
\begin{equation}\label{eq16}
S(x) \chi_B(x) = T(\beta)(x)\,.
\end{equation}
By \eqref{eq14}, the definition of $S(x)$ and \eqref{eq16}, we then
get
\begin{equation}\label{eq17}
i\;K(x)\chi_{\BC}(x)=  T(b)(x)+ T(\beta)(x) = T(\gamma) (x)\,,
\end{equation}
where $ \gamma = b + \beta$ belongs to $BMO$ and satisfies the decay
estimate \eqref{eq15} with $\beta$ replaced by $\gamma$. Once this
is achieved the proof of $(i)$ is just the argument presented in
Section 4.

\medskip

To construct $\beta$ satisfying  \eqref{eq15} and \eqref{eq16} we
resort to our hypothesis, condition $(iii)$ in the Theorem, which
says that $T = R \circ U$, where  $U$ is an invertible operator in
the algebra $A$, $R$ is a higher order Riesz transform and the
polynomial $P$ which determines $R$ divides $P_{2j+1}$\,,\, $0\leq j
\leq N $\,, in the ring of polynomials in $n$ variables with real
coefficients. The construction of $\beta$ is performed in two steps.

The first step consists in proving that there exists a function
$\beta_1$ in $BMO$\,, satisfying some additional properties, such
that
\begin{equation}\label{eq18}
S(x) \chi_B(x) = R(\beta_1)(x)\,.
\end{equation}
It will become clear later what these {\it additional properties}
 are and how they are used. To prove \eqref{eq18} we need an explicit formula for
$S(x)$ and for that we will make use of the following formula of
Lyons and Zumbrun \cite{LZ}.

\begin{lemma} \label{L2}
Let $L$ be a homogeneous polynomial of degree $l$ and let $f$ be a
smooth function of one variable. Then
$$
L(\partial)f(r)= \sum_{\nu \geq 0} \frac{1}{2^\nu\,\nu !}\,\,
\Delta^\nu L(x)\,\left(\frac{1}{r}\frac{\partial}{\partial
r}\right)^{l-\nu} f(r)\,, \quad r=|x|\,.
$$
\end{lemma}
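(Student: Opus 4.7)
The strategy is to reduce to a one-parameter family of polynomials by polarization and then prove the resulting identity by induction on $l$. Both sides of the identity are linear in $L$ for fixed $f$, and the polynomials $\{(a\cdot x)^{l} : a \in \R^{n}\}$ span the space of homogeneous polynomials of degree $l$ in $n$ variables (standard polarization of symmetric tensors). Hence it suffices to establish the identity when $L(x) = (a\cdot x)^{l}$ with $a \in \R^{n}$.

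For such an $L$, a short induction on $\nu$ gives
$$
\Delta^{\nu} L(x) \,=\, \frac{l!}{(l-2\nu)!}\,|a|^{2\nu}(a\cdot x)^{l-2\nu}, \qquad 0 \le \nu \le \lfloor l/2\rfloor,
$$
so the right hand side of the desired formula equals
$$
\sum_{\nu\ge 0}\, \frac{l!}{(l-2\nu)!\, 2^{\nu}\nu!}\,(a\cdot x)^{l-2\nu}\,|a|^{2\nu}\,g_{l-\nu}(r),
$$
where $g_{k}(r) := (r^{-1}\partial_{r})^{k} f(r)$. To compute the left hand side, set $u = a\cdot x$ and note the two elementary rules
$$
(a\cdot\nabla)\, u \,=\, |a|^{2}, \qquad (a\cdot\nabla)\, g_{k}(r) \,=\, u\, g_{k+1}(r),
$$
the second following from the chain rule together with $g_{k}'(r) = r\, g_{k+1}(r)$. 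Starting from $(a\cdot\nabla)f(r) = u\,g_{1}(r)$ and iterating these two rules gives an expansion
$$
(a\cdot\nabla)^{l} f(r) \,=\, \sum_{\nu\ge 0} c_{l,\nu}\, u^{l-2\nu}\,|a|^{2\nu}\, g_{l-\nu}(r).
$$
Applying $(a\cdot\nabla)$ once more and matching coefficients produces the two-term recurrence
$$
c_{l+1,\nu} \,=\, c_{l,\nu} + (l-2\nu + 2)\, c_{l,\nu-1}, \qquad c_{0,0}=1,
$$
and an elementary check shows $c_{l,\nu} = \frac{l!}{(l-2\nu)!\, 2^{\nu}\nu!}$ solves it, matching the coefficients of the reduced right hand side exactly.

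There is no analytic obstacle in this argument; the only real work is combinatorial, namely the coefficient recurrence for $c_{l,\nu}$. The polarization step is a clean linearity reduction, and the two derivation rules above are immediate from the chain rule, so once the closed form is guessed the proof is a direct induction.
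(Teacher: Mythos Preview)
Your argument is correct. The polarization reduction to $L(x)=(a\cdot x)^{l}$ is legitimate since these span the degree-$l$ homogeneous polynomials, the formula $\Delta^{\nu}(a\cdot x)^{l}=\dfrac{l!}{(l-2\nu)!}\,|a|^{2\nu}(a\cdot x)^{l-2\nu}$ is immediate by induction, and the two derivation rules $(a\cdot\nabla)u=|a|^{2}$ and $(a\cdot\nabla)g_{k}(r)=u\,g_{k+1}(r)$ give exactly the recurrence $c_{l+1,\nu}=c_{l,\nu}+(l-2\nu+2)c_{l,\nu-1}$, which the closed form $c_{l,\nu}=\dfrac{l!}{(l-2\nu)!\,2^{\nu}\nu!}$ satisfies.

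Note, however, that the paper does not supply its own proof of this lemma: it is quoted as ``the following formula of Lyons and Zumbrun'' with a citation to \cite{LZ}, a two-page note in \emph{Proc.\ Amer.\ Math.\ Soc.} Their original argument proceeds somewhat differently, working directly from the multinomial expansion of $L(\partial)$ applied to a radial function and organizing the resulting terms by how many pairs of indices coincide, rather than via polarization. Your route has the advantage of reducing everything to a single directional derivative $(a\cdot\nabla)^{l}$, so the bookkeeping collapses to a one-parameter recurrence; the cost is the appeal to the (standard but not entirely trivial) fact that powers of linear forms span the symmetric tensors. Either way the content is purely combinatorial, and your proof is a clean, self-contained alternative.
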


An immediate consequence of Lemma 1 is

\begin{lemma}\label{L3}
Let $P_{2j+1}$ a homogeneous harmonic polynomial of degree $2j+1$
and let $k$ be a non-negative integer. Then
$$
P_{2j+1}(\partial)(|x|^{2k}) = 2^{2j+1}\,\frac{k!}{(k-2j-1)!}
\,\,P_{2j+1}(x)\,|x|^{2(k-2j-1)}\quad \text{if}\quad 2j+1\leq k\,,
$$
and
$$
P_{2j+1}(\partial)(|x|^{2k}) = 0\,,\quad \quad  k < 2j +1 \,.
$$
\end{lemma}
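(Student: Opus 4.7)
The plan is to apply Lemma \ref{L2} directly, taking advantage of the harmonicity hypothesis to reduce the Lyons--Zumbrun sum to a single term, and then to compute explicitly the action of the radial operator $(r^{-1}\partial_r)^{2j+1}$ on the monomial $r^{2k}$.

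First I would specialize Lemma \ref{L2} to $L = P_{2j+1}$ (so $l = 2j+1$) and to the one-variable function $f(r) = r^{2k}$. Since $P_{2j+1}$ is harmonic, $\Delta P_{2j+1} = 0$, hence $\Delta^{\nu} P_{2j+1} \equiv 0$ for every $\nu \geq 1$. Only the $\nu = 0$ term survives in the Lyons--Zumbrun expansion, giving
$$
P_{2j+1}(\partial)(|x|^{2k}) \;=\; P_{2j+1}(x)\,\left(\frac{1}{r}\frac{\partial}{\partial r}\right)^{2j+1}\!\!(r^{2k}), \qquad r = |x|.
$$

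Next I would compute the scalar radial derivative by induction. A direct calculation shows
$$
\left(\frac{1}{r}\frac{\partial}{\partial r}\right) r^{2k} \;=\; 2k\, r^{2k-2},
$$
and iterating $m$ times yields
$$
\left(\frac{1}{r}\frac{\partial}{\partial r}\right)^{m}\! r^{2k} \;=\; 2^{m}\,k(k-1)\cdots(k-m+1)\, r^{2(k-m)} \;=\; 2^{m}\,\frac{k!}{(k-m)!}\,r^{2(k-m)},
$$
valid for $m \leq k$, while for $m > k$ the expression vanishes because one of the factors $k(k-1)\cdots(k-m+1)$ is zero.

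Setting $m = 2j+1$ and substituting into the displayed identity finishes both cases: when $2j+1 \leq k$ one obtains the claimed formula with the factor $2^{2j+1}\,k!/(k-2j-1)!$, and when $k < 2j+1$ the radial operator produces zero, so the whole expression vanishes. There is essentially no obstacle here; the only thing to be careful about is making sure the induction on $m$ correctly yields $0$ rather than an undefined factorial when $m > k$, which one handles by noting that the product $k(k-1)\cdots(k-m+1)$ contains the factor $0$ as soon as $m \geq k+1$.
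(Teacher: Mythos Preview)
Your proof is correct and follows exactly the approach the paper indicates: the paper states that Lemma~\ref{L3} is an immediate consequence of Lemma~\ref{L2}, and your argument is precisely the natural way to make that immediacy explicit, using harmonicity to kill all $\nu\ge 1$ terms and then computing $(r^{-1}\partial_r)^{2j+1}r^{2k}$ directly.
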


On the other hand, a routine computation gives

\begin{equation}\label{eq19}
\triangle^{j}(|x|^{2k}) = 4^j\,\frac{j! \,k!
}{(k-j)!}\binom{\frac{n}{2}+k-1}{j} \,|x|^{2(k-j)}\,,\quad j \leq k
\,,
\end{equation}
and
\begin{equation}\label{eq20}
\triangle^{j}(|x|^{2k}) = 0 \,,\quad k < j \,.
\end{equation}
By Lemma 2, \eqref{eq19} and \eqref{eq20} we get that for some
constants $c_{k,j}$ one has, in view of the definitions of $Q(x)$
and $S(x)$,
\begin{equation}\label{eq21}
S(x)= \sum_{j=0}^{N-1} \sum_{k=0}^{N-1-j}
c_{k,j}\,\,P_{2j+1}(x)\,\,|x|^{2k}\,.
\end{equation}
Therefore it suffices to prove \eqref{eq18} with $S(x)$ replaced by
$P_{2j+1}(x)\,|x|^{2k}$\,,\,for $0 \leq j \leq N-1$ and each
non-negative integer $k\le N-1-j$. The idea is to look for an
appropriate function $\psi$ such that
\begin{equation}\label{eq22}
P(\partial)\psi (x) = P_{2j+1}(x)\,|x|^{2k}\,\chi_B(x)\,.
\end{equation}
Let $2d+1$ be the degree of $P$. Assume for the moment that
\eqref{eq22} holds and $\psi$ is good
 enough. Then
$$
\psi = E *    (-\Delta )^{1/2}\Delta^d \psi\,,
$$
where $E$
 is the fundamental solution of $ (-\Delta )^{1/2}\triangle^d$.  Hence
 $$
P(\partial)\psi = P(\partial)E *  (-\Delta )^{1/2} \Delta^d \psi\, =
c\, p.v.\,\frac{P(x)}{|x|^{n+2d+1}} *  (-\Delta )^{1/2}\Delta^d
\psi\,= R(\beta_1)\,,
$$
where $\beta_1$ is defined as $ c\,  (-\Delta )^{1/2}\Delta^d \psi
=\displaystyle  c\,\sum_{i=1}^{n} R_{i}\partial_{i} ( \Delta^d
\psi)$\,. The conclusion is that we have to solve \eqref{eq22} in
such a way that $\partial_{i} \Delta^d \psi$ is a bounded function
supported on $B$ with zero integral, $1 \le i \le n $. If this is
the case, then $\beta_{1}$ is in $BMO$ and satisfies the decay
estimate $|\beta_{1}(x)|\le C|x|^{-n-1}$ if $|x|
>2$, as we proved before (see \eqref{eq15b}).

Taking Fourier transforms in \eqref{eq22} we get
\begin{equation}\label{eq23}
i (-1)^d P(\xi)\, \widehat{\psi} (\xi) = i
(-1)^{j+k}\,P_{2j+1}(\partial)\,\triangle^k
\left(\widehat{\chi_B}(\xi)\right)\,.
\end{equation}
Recall that for $m=n/ 2$ one has \cite [B.5]{GrCF}
$$
\widehat{\chi_B}(\xi)= (2\pi)^{m}\,\,\frac{J_m(|\xi|)}{|\xi|^m}
\,,\quad \xi \in \Rn\,,
$$
where $J_m$ is the Bessel function of order $m$. Set
$$
G_\lambda(\xi)= \frac{J_\lambda(|\xi|)}{|\xi|^\lambda}\,,\quad \xi \in
\Rn \,, \quad \lambda>0\,.
$$
In computing the right hand side of \eqref{eq23} we apply Lemma 3 to
$L(x)= P_{2j+1}(x) \,|x|^{2k}$ and $f(r)= G_m(r)$ and we get
$$
P(\xi)\, \widehat{\psi} (\xi) = (2\pi)^{m}(-1)^{j+k+d} \,\sum_{\nu \geq 0}
\frac{(-1)^{\nu +1}}{2^\nu\,\nu !}\,\triangle^\nu \left(P_{2j
+1}(\xi)\,|\xi|^{2k}\right)\,G_{m+2j+1+2k-\nu}(|\xi|)\,,
$$
owing to the well known formula (e.g. \cite [B.2]{GrCF})
$$
\frac{1}{r}\frac{d}{dr}\,G_{\lambda}(r) = - G_{\lambda+1}(r)\,,\quad
r>0\,,\quad \lambda>0\,.
$$
Since $P_{2j+1}(\xi)$ is homogeneous of degree $2j+1$\,, $\nabla
P_{2j+1}(\xi)\cdot \xi = (2j +1)\,P_{2j+1}(\xi)\,,$ and hence one
may readily show by an inductive argument that
$$
 \triangle^\nu \left(P_{2j+1}(\xi)\,|\xi|^{2k}\right)=
 a_{jk\nu}\,P_{2j+1}(\xi)\,|\xi|^{2(k-\nu)}\,,
$$
for some constants $a_{jk\nu}$\,. Thus, for some other constants $
a_{jk\nu}$\,, we get
\begin{equation}\label{eq24}
P(\xi)\, \widehat{\psi} (\xi) = \sum_{\nu \geq 0}
a_{jk\nu}\,P_{2j+1}(\xi)\,|\xi|^{2(k-\nu)}\,G_{m+2j+1+2k-\nu}(\xi)\,.
\end{equation}
By hypothesis $P$ divides $P_{2j+1}$ in the ring of polynomials in
$n$ variables and so
$$
P_{2j+1}(\xi)= P(\xi)\,Q_{2j-2d}(\xi)\,,
$$
for some homogeneous polynomial $Q_{2j-2d}$ of degree $2j-2d$\,.
Cancelling out the factor $P(\xi)$ in \eqref{eq24}\, we conclude
that
\begin{equation*}
 \widehat{\psi} (\xi) = Q_{2j-2d}(\xi)\,\,\sum_{\nu = 0}^k
a_{jk\nu}\,|\xi|^{2(k-\nu)}\,G_{m+2j+1+2k-\nu}(|\xi|)\,.
\end{equation*}
Since \cite [B.5]{GrCF}
\begin{equation*}
\widehat{\left((1-|x|^2)^\lambda \,\chi_B(x)\right)}(\xi) =
c_\lambda\,
 G_{m+\lambda}(|\xi|)\,,
\end{equation*}
we finally obtain

\begin{equation*}
 \psi(x) = Q_{2j-2d}(\partial)\,\,\sum_{\nu = 0}^k
{a}_{jk\nu}\,\triangle^{k-\nu}\left( (1-|x|^2)
^{2j+1+2k-\nu}\,\,\chi_B(x)\right)\,,
\end{equation*}
for other constants ${a}_{jk\nu}\,.$  \,\,Observe that $\psi$
restricted to $B$ is a polynomial which vanishes
 on $\partial B$ up to order $2d+1$ and $\psi$ is zero off $B$. Therefore,  $
\partial_{i} \triangle^d \psi$ has zero integral, is supported on $B$ and its restriction to $B$
is a polynomial, $1 \le i \le n$. This completes the first step of
the construction of $\beta$.

The second step proceeds as follows. Since by hypothesis $T=R\circ U
$, with $U$ invertible in the algebra $A$, we have
$$
R(\beta_1)= R\circ U (U^{-1}\beta_1)= T(U^{-1}\beta_1)\,.
$$
Setting
\begin{equation}\label{eq24bis}
\beta=U^{-1}\beta_1\,,
\end{equation}
we are only left with the task of showing that
\begin{equation*}\label{eq25}
\beta \in BMO(\Rn)
\end{equation*}
and that, for some positive constant $C$\,,
\begin{equation}\label{eq26}
 | \beta (x)|  \leq \frac{C}{|x|^{n+1}}\,,\quad |x| \geq 2\,.
\end{equation}
Since $U^{-1} \in A$\,, for some real number $\lambda$ and some
smooth homogeneous Calder\'{o}n-Zygmund operator $V$\,,
$$
U^{-1} = \lambda\,I + V \,.
$$
Thus
$$
\beta = \lambda\,\beta_1 + V (\beta_1)\,.
$$
By construction, $\beta_{1}=\displaystyle \sum_{i=1}^{n}
R_{i}a_{i}$, where each $a_{i}$ is a bounded function supported on
$B$ with zero integral. By \eqref{eq15b} $\beta_1$ satisfies
\eqref{eq26} with $\beta$ replaced by $\beta_1\,.$  Clearly,
$\beta\in BMO$ and so we only have to get the decay estimate
\eqref{eq26} with $V(\beta_{1})$ in place of $\beta$. In fact,
$$
V(\beta_{1}) =  \sum_{i=1}^{n} V\, R_{i}a_{i}= \sum_{i=1}^{n}
\lambda_{i} a_{i} + \sum_{i=1}^{n} V_{i} a_{i},
$$
because each $V\, R_{i }\in A$ and thus  $V\, R_{i } = \lambda_{i} I
+V_{i}$ for some real number $\lambda_{i}$ and some smooth
homogeneous Calder\'{o}n-Zygmund operator $V_{i}$. Following the
argument we used in \eqref{eq15b} with $V_i$ in place of $R_i$ we
finally get \eqref{eq26}.

\section{Proof of the sufficient condition: the general case}
In \cite{MOV} several facts about the convergence of the expansion
\eqref{eq6} of $\Omega$ in spherical harmonics were established. In
particular, since $\Omega$ is infinitely differentiable on the unit
sphere and has the spherical harmonics expansion
\begin{equation}\label{eq33}
\Omega(x) =  \sum_{j \geq 0}^\infty \,P_{2j+1}(x)\,.
\end{equation}
one has that, for each positive integer $M$,
\begin{equation*}\label{eq32}
\sum_{j\geq 1} (2j+1)^M \,\|P_{2j+1}\|_{\infty} < \infty\,,
\end{equation*}
where the supremum norm is taken on $S^{n-1}$.

By hypothesis there is a homogeneous harmonic polynomial $P$ of
degree $2d+1$ such that $P_{2j+1} = P \;Q_{2j-2d}$, where
$Q_{2j-2d}$ is a homogeneous polynomial of degree $2j-2d$. As in
\cite{MOV}, one shows that the series $ \sum_{j} Q_{2j-2d}(x)$ is
convergent in $C^\infty(S^{n-1})$, that is, that for each positive
integer $M$
\begin{equation}\label{eq34}
\sum_{j\geq d} j^M \,\|Q_{2j-2d}\|_{\infty} < \infty\,.
\end{equation}

The scheme for the proof of the sufficient condition in the general
case is as in \cite{MOV}. Nevertheless, we will have to overcome
several new difficulties, which are not substantial but still
require significant work.

Taking a large partial sum of the series \eqref{eq33} we pass to a
polynomial operator $T_N$ (associated to a polynomial of degree
$2N+1$), which still satisfies the hypothesis~$(iii)$ of the
Theorem. Then we may apply the construction of Section~5 to $T_N$
and get functions $b_N$ and $\beta_N$. Unfortunately what was done
in Section~5 does not give any uniform estimate  in $N$, which is
precisely what we need to try a compactness argument. The rest of
the section is devoted to get the appropriate uniform estimates and
to describe the final compactness argument.

By hypothesis,  $T=R\circ U$, where $R$ is the higher order Riesz
transform associated to the harmonic polynomial~$P$ of degree~$2d+1$
that divides all $P_{2j+1}$, and $U$ is invertible in the
algebra~$A$. The Fourier multiplier of $T$ is
$$
\sum_{j=d}^\infty \gamma_{2j+1}\, \frac{P_{2j+1}(\xi)}{|\xi|^{2j+1}}
= \gamma_{2d+1}\,\frac{P(\xi)}{|\xi|^{2d+1}}\,\sum_{j\geq d}
\frac{\gamma_{2j+1}}
{\gamma_{2d+1}}\,\frac{Q_{2j-2d}(\xi)}{|\xi|^{2j-2d}},\quad \xi \in
\Rn \setminus \{0\}\,.
$$
Therefore the Fourier multiplier of $U$ is
\begin{equation}\label{eq36}
\mu(\xi) =  \gamma_{2d+1}^{-1}\, \sum_{j\geq d} \gamma_{2j+1}
\,\frac{Q_{2j-2d}(\xi)}{|\xi|^{2j-2d}} \,,
\end{equation}
and the series is convergent in $C^\infty(S^{n-1})$ because
$\gamma_{2j+1}\simeq (2j+1)^{-n/2}$ \cite[p.~226]{SW}. Set, for
$N\geq d$,
\begin{equation*}\label{eq37}
\mu_N(\xi) =  \gamma_{2d+1}^{-1}\, \sum_{j=d}^N \gamma_{2j+1}
\,\frac{Q_{2j-2d}(\xi)}{|\xi|^{2j-2d}},\quad \xi \in \Rn \setminus
\{0\}\,.
\end{equation*}
If
$$
K_N (x) = \sum_{j=d}^N \frac{P_{2j+1}(x)}{|x|^{2j+1+n}},\quad x \in
\Rn \setminus \{0\}\,,
$$
and $T_N$ is the polynomial operator with kernel $K_N$, then $T_N =
R \circ U_N$, where $U_N$~is the operator in the algebra~$A$ with
Fourier multiplier $\mu_N(\xi)$. From now on $N$ is assumed to be
big enough so that $\mu_N(\xi)$ does not vanish on $S^{n-1}$. In
fact, we will need later on the inequality
\begin{equation}\label{eq38}
 | \partial^\alpha \mu^{-1}_N(\xi) | \leq  C,\quad |\xi|= 1,\quad
 0 \leq |\alpha| \leq 2(n+3)\,,
\end{equation}
which may be taken for granted owing to the convergence in
$C^\infty(S^{n-1})$ of the series~\eqref{eq36}. In \eqref{eq38} $C$
is a positive constant depending only on the dimension~$n$ and on
$\mu$.

Notice that $T_N$ satisfies condition $(iii)$ in the Theorem (with
$T$ replaced by $T_N$), because $\mu_N(\xi) \neq 0$, $|\xi| = 1$,
and so we can apply the results of Section~5. In particular,
$$
\imath \;K_N (x) \chi_{\BC}(x) = T_N(b_N)(x) +  T_N( \beta_N)(x)\,,
$$
where $b_N$ and $\beta_N$ are respectively the functions $b$ and
$\beta$ appearing in \eqref{eq17}. It is important to remark that
$b_N$ does not depend on $T$. As \eqref{eq11bis} shows, the
function~$b_N$ depends on $N$ only through the fundamental solution
of the operator $(-\Delta)^{1/2}\triangle^N$. The uniform estimate
we need on $b_N$ is given by part~(i) of the next lemma. The
polynomial estimates in $N$ of (ii) and $(iii)$ are also central for
the compactness argument we are looking for, and they were not
present in the corresponding lemma for the even case (Lemma 8 in
\cite{MOV}).

\begin{lemma}\label{bN}
There exist a constant $C$ depending only on $n$ such that
\begin{enumerate}
\item[(i)]
$$
|\widehat{b_N}(\xi)| \leq C,\quad \xi \in \Rn\,,
$$
\item [(ii)]
$$
 \| b_N  \|_{BMO} \leq  C \,(2N+1)^{2n}\,,
$$
and
\item [(iii)]  $$
 \| b_N  \|_{2} \leq  C \,(2N+1)^{2n}\,,
$$
where $\| {\cdot} \|_{BMO}$ and $\|  {\cdot} \|_2$ denote
respectively the $BMO$ and $L^2$ norms on $\Rn.$
\end{enumerate}
\end{lemma}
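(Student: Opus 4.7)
Recall from \eqref{eq11bis} that $b_N=(-\Delta)^{1/2}\Delta^N\varphi_N$, and equivalently $b_N=\sum_{j=1}^{n}R_j(\partial_j\psi_N)$, where
\begin{equation*}
\psi_N(x):=\Delta^N\varphi_N(x)=\frac{c_n}{|x|^{n-1}}\chi_{\BC}(x)+\bigl(\alpha_0+\alpha_1|x|^2+\cdots+\alpha_N|x|^{2N}\bigr)\chi_B(x)
\end{equation*}
is a radial continuous function. The coefficients $\alpha_k$ (equivalently, the $\beta_k$ appearing in $\partial_j\psi_N$) come from the $2N+1$ radial matching conditions at $|x|=1$ that make $\varphi_N$ of class $C^{2N}$ across $\partial B$; in particular, continuity of $\psi_N$ at $\partial B$ forces $\alpha_0+\alpha_1+\cdots+\alpha_N=c_n$.

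\textbf{Part (i).} Using the multiplier $-i\xi_j/|\xi|$ of $R_j$ and $\widehat{\partial_j\psi_N}(\xi)=i\xi_j\,\widehat{\psi_N}(\xi)$, one obtains the telescoping identity
\begin{equation*}
\widehat{b_N}(\xi)=\sum_{j=1}^{n}\frac{-i\xi_j}{|\xi|}\cdot i\xi_j\,\widehat{\psi_N}(\xi)=|\xi|\,\widehat{\psi_N}(\xi).
\end{equation*}
Since $\widehat{c_n|x|^{1-n}}(\xi)=1/|\xi|$ by the choice of $c_n$, decomposing $\psi_N=c_n|x|^{1-n}+f_N$ with $f_N$ supported in $\overline{B}$ gives $\widehat{b_N}(\xi)=1+|\xi|\,\widehat{f_N}(\xi)$. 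The whole point is the uniform bound $|\xi|\,|\widehat{f_N}(\xi)|\le C$. Note that $f_N$ is Lipschitz on $\Rn$ and vanishes on $\partial B$ (because $p_N(1)=c_n$), so one integration by parts with no boundary contribution gives $|\widehat{f_N}(\xi)|\le\|\nabla f_N\|_1/|\xi|$, while for $|\xi|\le 1$ the crude estimate $|\xi|\,|\widehat{f_N}(\xi)|\le\|f_N\|_1$ suffices. Thus (i) reduces to a bound $\|f_N\|_1+\|\nabla f_N\|_1\le C$ independent of $N$.

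\textbf{Parts (ii) and (iii).} Applying $\|R_j\|_{L^2\to L^2}\le C$ and $\|R_j\|_{L^\infty\to BMO}\le C$ gives
\begin{equation*}
\|b_N\|_2\le C\sum_{j=1}^{n}\|\partial_j\psi_N\|_2,\qquad \|b_N\|_{BMO}\le C\sum_{j=1}^{n}\|\partial_j\psi_N\|_\infty.
\end{equation*}
The piece of $\partial_j\psi_N$ outside $B$ equals $(1-n)c_n\,x_j/|x|^{n+1}$, which contributes to both norms a dimensional constant independent of $N$. On $B$, $\partial_j\psi_N(x)=\bigl(\beta_1+\beta_2|x|^2+\cdots+\beta_N|x|^{2N-2}\bigr)x_j$ is bounded in both $L^\infty(B)$ and $L^2(B)$ by $CN\max_k|\beta_k|$. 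Hence (ii) and (iii) both reduce to showing that $\max_k|\beta_k|\le C(2N+1)^{2n-1}$.

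\textbf{Main obstacle.} All three estimates collapse to quantitative bounds on the coefficients $\alpha_k$ (equivalently $\beta_k$, $A_k$) fixed by the matching of $\varphi_N$ to the radial fundamental solution $E_N$ of $(-\Delta)^{1/2}\Delta^N$ at $r=1$. Concretely, one must invert a $(2N+1)\times(2N+1)$ Vandermonde-type linear system whose entries are the Taylor coefficients of $E_N$ at $r=1$, and show that the solution does not grow in $N$ faster than a fixed power. This is the genuinely non-routine step and rests on the explicit description of $E_N$ (and the precise value of $c_n=\Gamma(\tfrac{n-1}{2})/(2\pi^{n/2}\Gamma(\tfrac{1}{2}))$) worked out in Section 8; once these bounds are in hand, the $L^1$ control of $f_N$ and $\nabla f_N$ for (i) and the bound $\max_k|\beta_k|\lesssim N^{2n-1}$ for (ii), (iii) follow by direct inspection.
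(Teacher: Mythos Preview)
Your approach differs substantially from the paper's, and it contains a genuine error together with an unjustified step that is the heart of the matter.

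\textbf{The error in part (i).} You write $\psi_N=c_n|x|^{1-n}+f_N$ and assert that $f_N$ is Lipschitz on $\Rn$. This is false: on $B$ one has $f_N(x)=p_N(|x|^2)-c_n|x|^{1-n}$, and for $n\ge 2$ the term $-c_n|x|^{1-n}$ blows up at the origin. In particular $|\nabla f_N|\sim (n-1)c_n|x|^{-n}$ near $0$, which is \emph{not} integrable, so your integration-by-parts bound $|\widehat{f_N}(\xi)|\le\|\nabla f_N\|_1/|\xi|$ is unavailable. You could try to isolate the $N$-independent singular piece $-c_n|x|^{1-n}\chi_B$ and integrate by parts only on $p_N(|x|^2)\chi_B$; but that function has a nonzero boundary value $p_N(1)=c_n$, introduces a surface term, and the interior contribution forces you to control $\int_B|\nabla p_N|\,dx\approx\sum_{k}|\alpha_k|$ \emph{uniformly in $N$}. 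That uniform bound is nowhere established and is far from obvious from the formulas of Section~8 (there is sign oscillation in the $\alpha_k$, and pointwise bounds on individual coefficients can grow).

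\textbf{The gap in parts (ii) and (iii).} You correctly reduce to $\max_k|\beta_k|\lesssim(2N+1)^{2n-1}$, but then simply declare this ``rests on the explicit description of $E_N$\dots worked out in Section~8''. Section~8 computes $A_L$, not $\alpha_k$ or $\beta_k$; passing from $A_L$ to $\alpha_k$ multiplies by $4^N\frac{N!(N+k)!}{k!}\binom{n/2+N+k-1}{N}$, and checking that the resulting product stays $\lesssim N^{2n-1}$ is a nontrivial combinatorial estimate you have not carried out. As written, the proof is incomplete at exactly the point you flag as ``non-routine''.

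\textbf{What the paper does instead.} The paper bypasses all coefficient analysis. It picks an orthonormal basis $h_1,\dots,h_d$ of spherical harmonics of degree $2N+1$, sets $S_j$ to be the higher Riesz transform with kernel $K_j(x)=H_j(x)/|x|^{2N+1+n}$ (where $H_j=h_j/(\gamma_{2N+1}\sqrt d)$), and uses $\sum_j S_j^2=I$ together with \eqref{eq10} to obtain the representation
\[
b_N=\sum_{j=1}^d S_j\bigl(K_j\,\chi_{\BC}\bigr).
\]
Part (i) then follows from a lemma of Calder\'on and Zygmund bounding $\widehat{K_j\chi_{\BC}}$ uniformly; parts (ii) and (iii) follow from the $L^\infty\!\to\!BMO$ and $L^2$ mapping properties of $S_j$, together with the dimension count $d\simeq(2N+1)^{n-2}$ and the estimates $\|K_j\|_{CZ}\lesssim(2N+1)^{n/2+2}$, $\|K_j\|_{L^\infty(\BC)}\lesssim(2N+1)^{n/2}$. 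No information about $\alpha_k$, $\beta_k$, or $A_L$ is used.
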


\begin{proof}
We first prove (i). Let $h_1,\dotsc,h_d$ be an orthonormal basis of
the subspace of $L^2(d\sigma)$ consisting of all homogeneous
harmonic polynomials of degree $2N+1$. As in the proof of Lemma 6 in
\cite{MOV} we have $h_1^2 +\dotsb+h_d^2 = d$, on $S^{n-1}$. Set
$$
H_j(x)= \frac{1}{\gamma_{2N+1} \sqrt{d}}\,h_j(x), \quad x \in \Rn\,,
$$
and let $S_j$ be the higher order Riesz transform with kernel
$K_j(x) = H_j(x)/|x|^{2N+1+n}$. The Fourier multiplier of $S_j^2$ is
$$
\frac{1}{d}\,\frac{h_j(\xi)^2}{|\xi|^{4N+2}}, \quad 0 \neq \xi \in
\Rn\,,
$$
and thus
$$
\sum_{j=1}^d S_j^2 = I \,.
$$
By \eqref{eq10}, we get
$$
K_j(x)\,\chi_{\BC}(x)=  S_j(b_N)(x),\quad x \in \Rn,\quad 1 \leq j
\leq d\,,
$$
and so
\begin{equation}\label{eq39}
b_N = \sum_{j=1}^d S_j\left( K_j(x) \,\chi_{\BC}(x)\right)\,.
\end{equation}
We now appeal to a lemma of Calder\'{o}n and Zygmund (\cite{CZ}) and
we readily get $(i)$ (see \cite{MOV}).

We now turn to the proof of (ii) in Lemma~\ref{bN}. In view of the
expression~\eqref{eq39} for~$b_N$, we  obtain, by the standard
$L^\infty-BMO$ estimate,
\begin{equation*}\label{eq40}
\|b_N\|_{BMO} \leq C\, d\, \max_{1 \leq j \leq d} \|K_j\|_{CZ}\,
\|K_j\|_{L^\infty(\BC)} \,.
\end{equation*}
Recall that the constant of the kernel $K(x)= \Omega(x)/|x|^n$ of
the smooth homogeneous Calder\'{o}n-Zygmund operator $T$ is
\begin{equation*}\label{eq29}
\|T\|_{CZ} \equiv \|K\|_{CZ}= \|\Omega \|_\infty+ \| |x|\, \nabla
\Omega(x) \|_\infty\,.
\end{equation*}

As it is well known, $d \simeq (2N+1)^{n-2}$ \cite[p.~140]{SW}. On
the other hand
$$
\|K_j \|_{CZ} \leq  \|H_j\|_\infty+\|\nabla H_j\|_\infty\,,
$$
where the supremum norms are taken on $S^{n-1}$. Clearly
$$
\|H_j\|_\infty  = \frac{1}{\gamma_{2N+1}}
\|\frac{h_j}{\sqrt{d}}\|_\infty \leq \frac{1}{\gamma_{2N+1}} \simeq
(2N+1)^{n/2}\,.
$$
For the estimate of the gradient of $H_j$ we use the
inequality~\cite[p.~276]{St}
\begin{equation*}\label{eq40bis}
\| \nabla H_j\|_\infty \leq C\,(2N+1)^{n/2+1}\,\|H_j\|_2\,,
\end{equation*}
where the $L^2$ norm is taken with respect to $d\sigma$. Since the
$h_j$ are an orthonormal system,
$$
\|H_j\|_2 = \frac{1}{\sqrt{d}\,\gamma_{2N+1}} \simeq
\frac{(2N+1)^{n/2}}{(2N+1)^{(n-2)/2}} \simeq 2N+1\,.
$$
Gathering the above inequalities we get
$$
\|K_j\|_{CZ} \leq C\,(2N+1)^{n/2+2}\,.
$$
On the other hand, $ \|K_j\|_{L^\infty(\BC)}  \leq (2N+1)^{n/2}$ and
therefore
$$
\| b_N  \|_{BMO} \leq  C \,(2N+1)^{n-2}\,(2N+1)^{n/2+2} \,
(2N+1)^{n/2} = C\,(2N+1)^{2n}\,.
$$
The estimate (iii) in Lemma~\ref{bN} follows from  $\|b_N\|_2 \leq
C\, d\, \max_{1 \leq j \leq d} \|K_j\|_{CZ}\, \|K_j\|_{L^2(\BC)},$
$\|K_j\|_{L^2(\Rn \setminus B)} \le C\; \|H_j\|_\infty$ and the
previous inequalities.
\end{proof}

Our goal is now to show that under condition $(iii)$ of the Theorem
we can find a function $\gamma$ in $BMO(\Rn)$ such that
\begin{equation}\label{eq42}
\imath\;  K(x)\chi_{\BC}(x)=  T(\gamma)(x),\quad x \in \Rn\,,
\end{equation}
and having a decay as in \eqref{eq15} with $\beta$ replaced by
$\gamma$. If $T$ is a polynomial operator this was proven in the
preceding section for a $\gamma$ of the form $b+ \beta$ (see
\eqref{eq17}). The plan is to produce a different approach to this
result, which has the advantage that, when applied to $T_N$, gives a
uniform $BMO$ bound on $\gamma_N= b_N+ \beta_N$.

Since $\Omega$ has the expansion \eqref{eq33} in spherical
harmonics, we have
\begin{equation*}
\begin{split}
K(x)\chi_{\BC}(x)& = \sum_{j\geq 0}
\frac{P_{2j+1}(x)}{|x|^{2j+1+n}}\,\chi_{\BC}(x)\\*[5pt]
               & =  \sum_{j\geq 0} T_j(b_{j})(x)\,,
\end{split}
\end{equation*}
where $T_j$ is the higher order Riesz transform with kernel $
P_{2j+1}(x) / |x|^{2j+1+n}$ and $b_{j}$ is the function constructed
in Section~4 (see \eqref{eq10} and \eqref{eq11bis}). The Fourier
multiplier of $T_j$ is
$$
 \gamma_{2j+1}\, \frac{P_{2j+1}(\xi)}{|\xi|^{2j+1}} =
\gamma_{2d+1}\,\frac{P(\xi)}{|\xi|^{2d+1}}\, \frac{\gamma_{2j+1}}
{\gamma_{2d+1}}\,\frac{Q_{2j-2d}(\xi)}{|\xi|^{2j-2d}},\quad \xi \in
\Rn \setminus \{0\}\,.
$$
Let $S_j$ be the operator whose Fourier multiplier is
\begin{equation}\label{eq42bis}
\frac{\gamma_{2j+1}}
{\gamma_{2d+1}}\,\frac{Q_{2j-2d}(\xi)}{|\xi|^{2j-2d}},\quad \xi \in
\Rn \setminus \{0\}\,,
\end{equation}
so that $T_j = R \circ S_j $. Then
\begin{equation*}
\begin{split}
  K(x)\chi_{\BC}(x)& = \sum_{j\geq d} (R \circ S_j )(b_{j})\\*[5pt]
  &= \sum_{j\geq d} T \left((U^{-1} \circ S_j )(b_{j})\right)\\*[5pt]
& =  T\left(\sum_{j\geq d} (U^{-1}\circ S_j)(b_j)\right)\,.
\end{split}
\end{equation*}

The latest identity is justified by the absolute convergence of the
series \newline $\sum_{j\geq d} (U^{-1}\circ S_j)(b_j)$ in
$L^2(\Rn)$, which follows, using   (iii) in Lemma~\ref{bN}, from the
estimate
\begin{equation*}
\begin{split}
\sum_{j\geq d} \|(U^{-1}\circ S_j)(b_j)\|_2 & \leq C\, \sum_{j\geq
d} \|Q_{2j-2d}\|_\infty \,\|b_j\|_{L^2(\Rn)}\\*[4pt] & \leq C\,
\sum_{j\geq d} \|Q_{2j-2d}\|_\infty \, ( 2j+1)^{2n} < \infty\,.
\end{split}
\end{equation*}
We claim now that the series $\sum_{j\geq d} (U^{-1}\circ S_j)(b_j)$
converges  in $BMO(\Rn )$ to a function $-\imath\;\gamma$, which
will prove \eqref {eq42} . Observe that the operator $U^{-1}\circ
S_j \in A$ is not necessarily a Calder\'{o}n-Zygmund operator
because the integral on the sphere of its multiplier does not need
to vanish. However it can be written as $ U^{-1}\circ S_j = c_j I +
V_j$, where
$$
c_j = \frac{\gamma_{2j}} {\gamma_{2d}}\,\int_{S^{n-1}} \mu(\xi)^{-1}
\, Q_{2j-2d}(\xi)\,d\sigma(\xi)
$$
and $V_j$ is the Calder\'{o}n-Zygmund operator with multiplier
\begin{equation}\label{eq43}
\mu(\xi)^{-1} \frac{\gamma_{2j}}
{\gamma_{2d}}\,\frac{Q_{2j-2d}(\xi)}{|\xi|^{2j-2d}} - c_j \,.
\end{equation}
Now
$$
\sum_{j\geq d} (U^{-1}\circ S_j)(b_j) = \sum_{j\geq d} c_j\,b_j +
\sum_{j\geq d} V_j(b_j)
$$
and the first series offers no difficulties because, by
Lemma~\ref{bN} (ii) and \eqref{eq34}
$$
\sum_{j\geq d} |c_j|\, \|b_j\|_{BMO} \leq C\, \sum_{j\geq d}
(2j+1)^{-n/2} (2j+1)^{2n} \|Q_{2j-2d} \|_\infty < \infty\,.
$$
The second series is more difficult to treat.
 By Lemma 5 and Lemma~\ref{bN} (ii) and (iii),
\begin{equation*}
\begin{split}
  \|V_j(b_j) \|_{BMO} & \leq C\, \| V_j \|_{CZ} \;\|b_j\|_{BMO} \\*[3pt]
               & \leq C\, (2j+1)^{2n} \, \| V_j\|_{CZ} \,.
\end{split}
\end{equation*}
Estimating the Calder\'{o}n-Zygmund constant of the kernel of the
operator~$V_j$ is not an easy task, because we do not have an
explicit expression for the kernel. We do know, however, the
multiplier \eqref{eq43} of $V_j$. We need a way of estimating the
constant of the kernel in terms of the multiplier and this is what
Lemma 9 of \cite{MOV} achieves. The final outcome is
$$
\| V_j \|_{CZ} \leq C\,j^M \,\|P_{2j+1}\|_2\,,
$$
for some positive integer $M$ depending only on $n$ and the
polynomial $P.$ Thus
$$
  \|V_j(b_j) \|_{BMO} \leq C\,j^M \,\|P_{2j+1}\|_2\,,
$$
where again $M=M(n,P)$ is a positive integer. Hence the series
$\sum_{j\geq d} (U^{-1}\circ S_j)(b_j)$ converges in $BMO(\Rn)$ and
the proof of \eqref{eq42} is complete.

We are now ready for the discussion of the final compactness
argument that will complete the proof of the sufficient condition.
 We know from Section 5 (see \eqref{eq17})
that
\begin{equation}\label{eq45}
\imath \;K_N(x)\chi_{\BC}(x)=  T_N(b_N)(x)+ T_N( \beta_N)(x)\,.
\end{equation}
On the other hand, by the construction of the function $\gamma$ we
have just described, we also have
\begin{equation}\label{eq46}
\imath \; K_N(x)\chi_{\BC}(x)=  T_N(\gamma_N)(x), \quad \gamma_N =
\sum_{j\geq d}^N (U_N^{-1}\circ S_j)(b_j)\,.
\end{equation}
Notice that \eqref{eq38} guaranties that Lemma 9 of \cite{MOV} may
be applied to the operator $T_N$ and so the estimate of the $BMO$
norm of $\gamma_N$  is uniform in $N$. Since $T_N$ is injective,
\eqref{eq45} and \eqref{eq46} imply
\begin{equation}\label{eq46bis}
b_N + \beta_N = \gamma_N
\end{equation}
and, in particular, we conclude that the functions $b_N + \beta_N$
are uniformly bounded in $BMO(\Rn)$, a fact that cannot be derived
from the work done in Section~5.  On the other hand, Section 5 tells
us that $\gamma_N$ satisfies the decay estimate \eqref{eq15} with
$\beta$ replaced by $\gamma_N$, which we cannot infer from the
preceding construction of $\gamma$. The advantages of both
approaches will be combined now to get both the boundedness in $BMO$
and the decay property for $\gamma$.

In view of \eqref{eq46} and the expressions of the multipliers  of
$U_N$ and $S_j$ (see \eqref{eq42bis}),
$$
\widehat{\gamma_N}(\xi) = \sum_{j=d}^N
\frac{1}{\mu_N(\xi)}\,\frac{\gamma_{2j+1}}{\gamma_{2d+1}}\,\frac{Q_{2j-2d}(\xi)}{|\xi|^{2j-2d}}\,
 \widehat{b_j}(\xi)\,,
$$
which yields, by Lemma~\ref{bN} $(i)$ and \eqref{eq34} for $M=0$,
\begin{equation*}\label{eq48}
\begin{split}
  \|\widehat{\gamma_N} \|_{L^\infty(\Rn)} & \leq C\,\sum_{j= d}^N \|Q_{2j-2d}
  \|_{\infty}\\*[5pt]
  & \leq C\, \sum_{j= d}^\infty \|Q_{2j-2d}\|_{\infty} \\*[5pt]
  & \leq C\,,
\end{split}
\end{equation*}
where $C$ does not depend on $N$. Recall that, from \eqref{eq24bis}
in Section~5, we have
$$
\beta_N =U_N^{-1}(\beta_{1,N})\,,
$$
with $\beta_{1,N}= \displaystyle\sum_{i=1}^{n}R_{i}\partial_{i}
(f_{N})$, where $f_{N}$ is a $C^1$ function supported on $B$. Since
$$
\widehat{\beta_{1,N}} = \mu_N \, \widehat{\beta_{N}} = \mu_N
\,(\widehat{\gamma_{N}} - \widehat{b_{N}})\,,
$$
we have, again by Lemma~\ref{bN} $(i)$,
$$
 \|\widehat{\beta_{1,N}} \|_{L^\infty(\Rn)} \leq C\,.
$$
Therefore, passing to a subsequence, we may assume that, as $N$ goes
to $\infty$,
\begin{equation}\label{eq49bis}
\widehat{b_{N}}\longrightarrow a_0 \quad\quad\quad
{\text{and}}\quad\quad\quad \widehat{\beta_{1,N}} \longrightarrow
a_1\,,
\end{equation}
weak $*$ in $L^\infty(\Rn)$. Hence
$$
b_{N}\longrightarrow \Phi_0 = \mathcal{F}^{-1}{a_0} \quad\quad\quad
{\text{and}}\quad\quad\quad \beta_{1,N} \longrightarrow \Phi_1 =
 \mathcal{F}^{-1}{a_1}\,,
$$
in the weak $*$ topology of tempered distributions,
$\mathcal{F}^{-1}$ being the inverse Fourier transform.

We would like now to understand the convergence properties of the
sequence of the $\beta_N$'s . Since
$$
\widehat{\beta_{N}}(\xi) =  \mu_N^{-1}(\xi) \,
\widehat{\beta_{1,N}}(\xi)\,,
$$
and we have pointwise bounded convergence of $\mu_N^{-1}(\xi)$
towards $\mu^{-1}(\xi)$ on $\Rn \setminus\{0\}$, we get that
$\widehat{\beta_{N}} \rightarrow \mu^{-1}\,a_1$, in the weak $*$
topology of $L^\infty(\Rn)$. Thus $\beta_{N} \rightarrow
U^{-1}(\Phi_1)$ in the weak $*$ topology of tempered distributions.
Letting $N \rightarrow \infty$ in \eqref{eq46bis} we obtain
\begin{equation*}
\begin{split}
\gamma & = \Phi_0 + U^{-1}(\Phi_1)\\
           & = \Phi_0 +  \lambda \Phi_1 + V(\Phi_{1}) \, ,
\end{split}
\end{equation*}
where $\lambda$ is a real number and $V$ a smooth homogeneous
Calder\'{o}n-Zygmund operator.

We come now to the last delicate point of the proof, namely, that
one has the decay estimate
\begin{equation}\label{eq50}
|\gamma(x)| \leq \frac{C}{|x|^{n+1}},\quad |x|\geq 2\,.
\end{equation}
We claim that, as tempered distributions,
\begin{equation}\label{expre1}
\Phi_{0} =  \sum_{i=1}^n R_{i}\partial_{i}(S_{0}) +c \delta_{0}
\qquad \text{and} \qquad \Phi_{1} = \sum_{i=1}^n
R_{i}\partial_{i}(S_{1}) \, ,
\end{equation}
where $S_{0}$ and $S_{1}$ are distributions supported on $\overline
B$ and $c$ is a constant depending only on $n$. Recall that
$\beta_{1,N}= \displaystyle\sum_{i=1}^{n}R_{i}\partial_{i} (f_{N})$,
where $f_N$ is a $C^1$ function supported on $\overline{B}$, and, by
\eqref{eq11bis},
$$b_{N}=
\displaystyle\sum_{i=1}^{n}R_{i}\partial_{i}
\left(\frac{c_{n}}{|x|^{n-1}} \chi_{B^{c}}  + P_{N}   \chi_{B}
\right ) \,,$$ where $P_N$ is a polynomial. Set
$$
\alpha_N = \frac{c_{n}}{|x|^{n-1}} \chi_{B^{c}}  + P_{N}   \chi_{B}.
$$
Hence
$$\widehat{\beta_{1,N}}(\xi) =  |\xi |\, \widehat{f_{N}}(\xi)
\quad\quad\quad \text{and} \quad\quad\quad  \widehat{b_{N}}(\xi) =
|\xi|\, \widehat{\alpha_{N}}(\xi).
$$ By \eqref{eq49bis}, since $a_{0}, a_{1}\in L^{\infty}$ and
$\displaystyle{\frac{1}{|\xi|}}$ is locally integrable in $\Rn$
(because we may assume $n \geq 2$),
\begin{equation*}
\widehat{\alpha_{N}}\longrightarrow \frac{a_0}{ |\xi|}
\quad\quad\quad {\text{and}}\quad\quad\quad \widehat{f_{N}}
\longrightarrow \frac{a_1}{ |\xi|} \,,
\end{equation*}
 in the weak $*$ topology of tempered distributions. Hence
\begin{equation*}
\alpha_{N} \longrightarrow \alpha :=  \mathcal{F}^{-1}\left(
\frac{a_0}{ |\xi|} \right)\quad\quad\quad
{\text{and}}\quad\quad\quad f_{N}  \longrightarrow
S_{1}:=\mathcal{F}^{-1} \left(\frac{a_1}{ |\xi|}\right) \,,
\end{equation*}
in the weak $*$ topology of tempered distributions. Since each
$f_{N}$ is supported on $B$ we get that $S_{1}$ is also supported on
$\overline B$ and we obtain \eqref{expre1} for $\Phi_{1}$. On the
other hand, observe that
\begin{equation*}
P_{N}   \chi_{B} = \alpha_{N} - \frac{c_{n}}{|x|^{n-1}} \chi_{B^{c}}
\longrightarrow \alpha - \frac{c_{n}}{|x|^{n-1}} \chi_{B^{c}}:=
\alpha'\,,
\end{equation*}
with $\alpha'$ a tempered distribution supported on $\overline B$.
Set
$$S_{0}= \alpha' -
\frac{c_{n}}{|x|^{n-1}} \chi_{B}.$$  The claim now follows from the
chain of identities
\begin{equation*}
\begin{split}
\Phi_{0} & = \sum_{i=1}^n R_{i}\partial_{i}(\alpha)\\
               &=  \sum_{i=1}^n R_{i}\partial_{i}\left(\alpha' + \frac{c_{n}}{|x|^{n-1}}
               \chi_{B^{c}}\right) \\
               & =  \sum_{i=1}^n R_{i}\partial_{i}\left(\alpha' - \frac{c_{n}}{|x|^{n-1}} \chi_{B}\right) +
                \sum_{i=1}^n R_{i}\partial_{i}\left(\frac{c_{n}}{|x|^{n-1}}  \right)\\
                & = \sum_{i=1}^n R_{i}\partial_{i}( S_{0}) + c \sum_{i=1}^n R_{i}* R_{i}\\
                &=   \sum_{i=1}^n R_{i}\partial_{i}( S_{0}) + c\delta_{0} \, .
\end{split}
\end{equation*}

Therefore,
\begin{equation*}
\gamma = \sum_{i=1}^n R_{i}\partial_{i}(S_{0}) + c\delta_{0} +
\lambda \sum_{i=1}^n R_{i}\partial_{i}(S_{1}) + V\left( \sum_{i=1}^n
R_{i}\partial_{i}(S_{1}) \right) .
\end{equation*}
Write, for each $i$,\, $V\circ R_{i} = \lambda_i\,I + V_i$ for some
real number $\lambda_i$ and some homogeneous smooth
Calder\'{o}n-Zygmund operator $V_i$. Thus to get \eqref{eq50} it is
enough to show that
$$
|V(\partial_{i}S)(x)| \le \frac{C}{|x|^{n+1}}\,,\quad |x| \geq 2\,,
$$
where $V$ is a homogeneous smooth Calder\'{o}n-Zygmund operator and
$S$ a distribution supported on $\overline{B}$. Regularizing $S$ one
checks that, for a fixed $x$ with $|x|\geq 2$,
\begin{equation}\label{eq51}
\begin{split}
V(\partial_{i}S )(x) & = \langle \partial_{i}S,\, L(x-y)\rangle
\\*[3pt] & = - \langle S,\, \frac{\partial}{\partial y_i}\, L(x-y)\rangle \,,
\end{split}
\end{equation}
Since $ S$ is a distribution supported on~$\overline B$ there exists
a positive integer $\nu$ and a constant $C$ such that
\begin{equation}\label{eq52}
|\langle S ,\,\varphi\rangle | \leq C\sup_{|\alpha|\leq \nu}
\sup_{|y|\leq 3/2} |\partial^\alpha \varphi(y)|\,,
\end{equation}
for each infinitely differentiable function $\varphi$ on $\Rn$. The
kernel $L$ satisfies
$$
 |\frac{\partial^\alpha}{{\partial y}^\alpha}\,\frac{\partial}{{\partial
 y_i}}\,
L(x-y)| \leq \frac{C_\alpha}{|x|^{n+1+|\alpha|}},\quad |y| \leq
3/2\,,
$$
and hence, by \eqref{eq51} and \eqref{eq52},
$$
|V( \partial_{i} S)(x)| \leq \frac{C}{|x|^{n+1}},\quad |x|\geq 2\,,
$$
which proves \eqref{eq50} and then completes the proof of the
sufficient condition in the general case.

\section{Proof of the necessary condition }

The proof of the necessary condition is completely analogue to the
even case. We will just start the argument to help the reader in
capturing the context.

We first assume that $T$ is a polynomial operator with kernel
$$
K(x)=\frac{\Omega(x)}{|x|^n}=
\frac{P_1(x)}{|x|^{1+n}}+\frac{P_3(x)}{|x|^{3+n}}+...+\frac{P_{2N+1}(x)}{|x|^{2N+1+n}}\,,\quad
x \neq 0\,,
$$
where $P_{2j+1}$ is a homogeneous harmonic polynomial of degree
$2j+1$\,. Let $Q$ be the  homogeneous polynomial of degree $2N+1$
defined by
\begin{equation*}
 Q(x)=  \left(\gamma_1 \, P_1(x)|x|^{2N}+ ...+ \gamma_{2j+1}\,
P_{2j+1}(x)|x|^{2N-2j}+...+ \gamma_{2N+1}\,P_{2N+1}(x) \right)\,.
\end{equation*}
Then
$$
\widehat{p.v.K}(\xi) = \frac{Q(\xi)}{|\xi|^{2N+1}}\,.
$$
 Our assumption is now the $L^2(\Rn)$ control of $T^{*}f$ by $Tf$ (i.e., $(ii)$ in the statement of
the Theorem). Since the truncated operator $T^1$ at level $1$ is
obviously dominated by $T^{*}$, we have
$$
\int (T^1f)^2 (x)\,dx \leq \int (T^{*}f)^2 (x)\,dx \leq C\,\int
(Tf)^2 (x)\,dx \,.
$$
The kernel of $T^1$ is (see \eqref{eq14})
\begin{equation}\label{eq53}
K(x)\,\chi_{\BC}(x)= T(b)(x) + S(x)\,\chi_{B}(x)\,.
\end{equation}
where $b$ is given in equation \eqref{eq11bis} and
$$
-S(x)= Q(\partial)(A_0+A_1\,|x|^2+...+
A_{2N-1}\,|x|^{4N})(x)\,,\quad x \in \Rn \,.
$$
The reader may consult the beginning of Section 5 to review the
context of the definition of $S$\,. In view of \eqref{eq53} we have,
for each $f  \in L^2(\Rn)$,
\begin{equation*}
\begin{split}
  \|S\,\chi_{B} * f \|_2  & \leq C\, \|T^1 f \|_2 + \|b * Tf \|_2 \\
& \leq C\, (\|Tf \|_2 +\|\widehat{b} \|_\infty \|Tf \|_2) \\
& = C\,\|Tf\|_2 \,.
\end{split}
\end{equation*}
By Plancherel, the above $L^2$ inequality translates into a
pointwise inequality between the Fourier multipliers, namely,
\begin{equation}\label{eq54}
|\widehat{S\,\chi_B}(\xi)| \leq C\, |\widehat{p.v. K}(\xi)| = C\,
\frac{|Q(\xi)|}{|\xi|^{2N+1}}\,.
\end{equation}

Notice that $Q$ has plenty of zeros because it has zero integral on
the sphere. Our next aim is to use \eqref{eq54} to show that
$P_{2j+1}$ vanishes where $Q$ does.  For each function $f$ on $\Rn$
set $Z(f)= \{x \in \Rn : f(x)=0 \} \,.$

\begin{lemanom}[Zero Sets Lemma]
$$
Z(Q) \subset Z(P_{2j+1})\,, \quad 0 \leq j \leq N \,.
$$
\end{lemanom}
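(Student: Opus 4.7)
The strategy is to exploit the Plancherel-derived pointwise estimate
$$|\widehat{S\chi_B}(\xi)| \leq C\,\frac{|Q(\xi)|}{|\xi|^{2N+1}}, \qquad \xi \in \Rn,$$
together with the explicit expression (21) for $S$ in order to extract, via Bessel-function asymptotics, the vanishing of each spherical harmonic $P_{2j+1}$ at a zero of $Q$.

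First, fix $\xi_0 \in S^{n-1}$ with $Q(\xi_0)=0$. Since $Q$ is homogeneous of degree $2N+1$, one has $Q(t\xi_0)=0$ for every $t>0$, hence the estimate forces $\widehat{S\chi_B}(t\xi_0)=0$ for all $t>0$. Next, I would compute $\widehat{S\chi_B}$ explicitly. Starting from $S(x)=\sum_{j=0}^{N-1}\sum_{k=0}^{N-1-j} c_{k,j}\,P_{2j+1}(x)\,|x|^{2k}$ on $B$, I apply Lemma \ref{L3} on the Fourier side together with the formula $\widehat{\chi_B}(\xi)=(2\pi)^{n/2}J_{n/2}(|\xi|)/|\xi|^{n/2}$. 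This produces, for each $(j,k)$,
$$\widehat{P_{2j+1}(x)\,|x|^{2k}\,\chi_B}(\xi) = P_{2j+1}(\xi)\,F_{jk}(|\xi|),$$
where $F_{jk}$ is a specific linear combination of the radial profiles $G_\nu(r)=J_\nu(r)/r^\nu$. Assembling the sum,
$$\widehat{S\chi_B}(\xi) = \sum_{j=0}^{N-1} P_{2j+1}(\xi)\,G_j(|\xi|),$$
with $G_j(r)=\sum_{k} c_{k,j} F_{jk}(r)$.

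Evaluating this at $t\xi_0$ and using the homogeneity of $P_{2j+1}$, the vanishing of $\widehat{S\chi_B}$ on the ray becomes
$$\sum_{j=0}^{N-1} P_{2j+1}(\xi_0)\,t^{2j+1}\,G_j(t) = 0, \qquad t>0.$$
To deduce $P_{2j+1}(\xi_0)=0$ for every $j=0,\dots,N-1$, I would invoke the classical asymptotic expansion
$$J_\nu(r) = \sqrt{\frac{2}{\pi r}}\,\cos\!\left(r-\tfrac{\nu\pi}{2}-\tfrac{\pi}{4}\right) + O(r^{-3/2}), \qquad r\to\infty,$$
and match oscillatory modes. Because the Bessel orders occurring in the various $G_j$ differ by integers with controlled parity, the resulting phase shifts $\nu\pi/2$ distinguish sine and cosine modes across distinct $j$'s; combined with the distinct powers $t^{2j+1}$, this gives linear independence of the functions $t^{2j+1}G_j(t)$ in the asymptotic sense and forces each coefficient $P_{2j+1}(\xi_0)$ to vanish. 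Finally, the case $j=N$ follows at once: since $Q(\xi_0)=\sum_{j=0}^N \gamma_{2j+1}P_{2j+1}(\xi_0)=0$ and the lower terms already vanish, $\gamma_{2N+1}\neq 0$ yields $P_{2N+1}(\xi_0)=0$.

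The main obstacle is the last step of the central argument, namely the linear independence of the Bessel profiles $G_j(t)$. Tracking explicitly which half-integer orders appear in each $G_j$ and verifying that the leading cosine/sine asymptotics decouple requires careful combinatorial bookkeeping, paralleling the analogous computation carried out for even kernels in \cite{MOV}. Everything else (the Fourier computation via Lemma \ref{L3} and the reduction of the $j=N$ case) is essentially formal once this decoupling is in hand.
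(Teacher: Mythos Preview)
Your setup and your treatment of the case $j=N$ coincide with the paper's. The gap is in the separation step: the large-$r$ asymptotic argument you propose does not work as stated, because neither the powers $t^{2j+1}$ nor the Bessel phases distinguish the contributions of the different $j$'s. Tracing through \eqref{eq54tris}, the radial factor $G_j(r)$ attached to $P_{2j+1}(\xi_0)$ is a sum of terms $r^{2(s-j-1-k)}G_{n/2+2s-1-k}(r)$ with $j+1\le s\le N$ and $0\le k\le s-j-1$. Since $G_\nu(r)\sim c\,r^{-\nu-1/2}\cos(r-\nu\pi/2-\pi/4)$, the leading ($k=0$) contribution to $t^{2j+1}G_j(t)$ decays like $t^{-n/2-1/2}$ for \emph{every} $j$: the factors $t^{2j+1}$ are exactly cancelled by the extra decay of $G_j$. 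Moreover the leading Bessel orders $\nu=n/2+2s-1$ differ only by even integers as $s$ varies, so the phases $\nu\pi/2$ differ by multiples of $\pi$ and all the oscillatory factors are $\pm$ one and the same cosine. Thus at leading order every $t^{2j+1}G_j(t)$ is a scalar multiple of the same function $t^{-n/2-1/2}\cos(t-\tfrac{n\pi}{4}+\tfrac{\pi}{4})$, and no decoupling occurs; going to subleading orders only entangles the $j$'s further.

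The paper separates the $P_{2j+1}$ by looking at $r=0$ rather than $r=\infty$. Since each $G_\nu$ is entire, one expands $\widehat{S\chi_B}(r\xi_0)=\sum_{p\ge0}a_{2p+1}(\xi_0)\,r^{2p+1}$ as a convergent power series and reads off the coefficients inductively: $a_1(\xi_0)=C_1\,P_1(\xi_0)$, and once $P_1(\xi_0)=\cdots=P_{2j-1}(\xi_0)=0$ one has $a_{2j+1}(\xi_0)=C_{2j+1}\,P_{2j+1}(\xi_0)$. The vanishing of all $a_{2p+1}(\xi_0)$ then forces $P_{2j+1}(\xi_0)=0$ step by step, \emph{provided} each constant $C_{2j+1}$ is nonzero. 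That non-degeneracy is precisely Lemma~\ref{L4}, a closed-form evaluation of $C_{2j+1}$ whose proof is the combinatorial computation carried out in Section~8. So the ``careful combinatorial bookkeeping'' you anticipate is indeed the heart of the matter, but it lives in the Taylor coefficients at the origin, where the system is triangular, rather than in the asymptotics at infinity, where it is not.
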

\begin{proof}
We know that $S$ has an expression of the form (see \eqref{eq21})
\begin{equation*}\label{eq54bis}
S(x)= \sum_{L=N+1}^{2N} \sum_{j=0}^{L-N-1}
c_{L,j}\,\,P_{2j+1}(x)\,\,|x|^{2(L-N-j-1)}\,.
\end{equation*}
Since $\widehat{\chi_B} = G_m (2\pi)^{m}\,,\,\,m=n/2$, Lemma \ref{L2} yields

\begin{equation}\label{eq54tris}
\begin{split}
\widehat{S\,\chi_{B}}(\xi) & = S(\imath\, \partial)\,
\widehat{\chi_B}(\xi)\\
& = \imath  (2\pi)^{n/2}\sum_{L=N+1}^{2N} \sum_{j=0}^{L-N-1}
c_{L,j}\,(-1)^{L-N}\,\,P_{2j+1}(\partial)\,\,\triangle^{L-N-j-1}\,G_{\n}(\xi)\\
 = (2\pi)^{n/2} & \sum_{L=N+1}^{2N} \sum_{j=0}^{L-N-1} \sum_{k=0}^{L-N-j-1}
c_{\,L,\,j,\,k}\,\,P_{2j+1}(\xi)\,\,|\xi|^{2(L-N-j-1-k)}
G_{\n+2(L-N)-1-k}(\xi)\,.
\end{split}
\end{equation}
The function $G_p(\xi)$ is, for each $p \geq 0$\,,\, a radial
function which is the restriction to the real positive axis of an
entire function \cite [B.6]{GrCF}. Set $\xi= r\,
\xi_0\,,\,\,|\xi_0|=1\,,\,\,r\geq0$\,. Then
\begin{equation}\label{eq55}
(2\pi)^{-n/2}\widehat{S \chi_B}(r\xi_0)= \sum_{p=0}^{\infty}
a_{2p+1}(\xi_0)\,r^{2p+1} \,,
\end{equation}
and the power series has infinite radius of convergence for each
$\xi_0$. Assume now that $Q(\xi_0) = 0$. Then, by \eqref{eq54},
$\widehat{S \chi_B}(r\xi_0)= 0\,$ for each $r \geq 0$\,, and hence
$a_{2p+1}(\xi_0) = 0\,, $\,for each $p \geq 0$\,. For $p=0$ one has
$a_{1}(\xi_0) = P_1(\xi_0)\,C_1$\,, where
$$
C_1 = \sum_{L=N+1}^{2N} c_{\,L,\,0,\,L-N-1}\,G_{\n+L-N}(0)\,.
$$
It will be shown later that $C_1 \neq 0$\,,\, and then we get
$P_1(\xi_0) =0 $\,. Let us make the inductive hypothesis that
$P_1(\xi_0)= ...=P_{2j-1}(\xi_0)=0$\,. Then we obtain, if $j \leq
N-1$\,,\, $a_{2j+1}(\xi_0) = P_{2j+1}(\xi_0)\,C_{2j+1}$\,,\,where

\begin{equation}\label{eq55bis}
C_{2j+1} = \sum_{L=N+1+j}^{2N}
c_{\,L,\,j,\,L-N-j-1}\,G_{\n+L-N+j}(0)\,.
\end{equation}

Since we will show that $C_{2j+1} \neq 0$\,,\,  $P_{2j+1}(\xi_0) =0
\,,\,\, 0 \leq j \leq N-1$\,. We have
$$
0= Q(\xi_0)=  \sum_{j=0}^N \gamma_{2j+1} \, P_{2j+1}(\xi_0)\,,
$$
and so we also get $P_{2N+1}(\xi_0)=0$\,. Therefore the zero sets
Lemma is completely proved provided we have at our disposition the
following formula, which in particular shows that $C_{2j+1} \neq
0\,,\,\,  0 \le j \le N-1 \,.$
\begin{lemma}\label{L4}
\begin{equation*}
\begin{split}
C_{2j+1}& = \frac{1}{2^{\n}} \; \frac{(-1)^{j}}{4^{j} (2j+1)
\Gamma(\n +2j +1)}, \quad  0 \le j \le N-1 \,.
\end{split}
\end{equation*}
\end{lemma}
The proof of Lemma \ref{L4} is lengthy and rather complicated from
the computational point of view, and so we postpone it to Section 8
\,.
\end{proof}

Notice that, although the constants $C_{2j+1}$ are non-zero, they
become rapidly small  as the index $j$ increases and they oscillate
around $0$.

The reason why Lemma \ref{L4} is involved is that one has to trace
back the exact values of the constants $C_{2j+1}$ from the very
beginning of our proof of \eqref{eq53}. This forces us to take into
account the exact values of various constants. For instance, those
which appear in the expression of the fundamental solution of
$(-\Delta)^{1/2} \triangle^N$ and the constants $A_0, A_1,...,
A_{2N}$ in formula \eqref{eq11}\,. Finally,  we need to prove some
new identities involving a triple sum of combinatorial numbers, in
the spirit of those that can be found in the book of R.\ Graham D.
Knuth and O.\ Patashnik \cite{GKP}\,.

The remaining of the proof of the necessary condition is basically a
plain translation of what was done in the even case. One first
completes the proof of the polynomial case by an appropriate
division process. Then the general case must be faced. We reduce to
the polynomial case by truncating the spherical harmonics expansion
of $\Omega$. Denoting by $S_N$ the analogue of $S$ at the truncated
level we set $\xi= r\,\xi_0$\,, with $|\xi_0| = 1$ and $ r>0$\,.
Rewrite \eqref{eq55} with $S$ replaced by $S_N$ and $a_{2p+1}$ by
$a_{2p+1}^N$\, :
\begin{equation*}
(2\pi)^{-n/2}\widehat{S_N \chi_B}(r\xi_0)= \sum_{p=0}^{\infty} a_{2p+1}^N
(\xi_0)\,r^{2p+1} \,.
\end{equation*}
As in the even case, it is a remarkable key fact that for a fixed
$p$ the sequence of the $a_{2p+1}^N$ stabilizes for N large. This
fact depends on a laborious computation of various constants and
will be proved in Section 8 in the following form.
\begin{lemma}\label{L5}
If $p+1 \leq N $\,, then \,\,$a_{2p+1}^N = a_{2p+1}^{p+1}$\,.
\end{lemma}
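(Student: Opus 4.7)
The approach is to extract the coefficient $a_{2p+1}^{N}(\xi_0)$ of $r^{2p+1}$ directly from the expansion \eqref{eq54tris} of $\widehat{S_N\,\chi_B}(r\xi_0)$ and then show that the resulting expression stabilizes in $N$.

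Substituting $\xi = r\xi_0$ into the analog of \eqref{eq54tris} for the truncated operator $S_N$ and expanding each Bessel factor as
\[
G_\lambda(r) = \sum_{m \geq 0} \frac{(-1)^m}{2^{\lambda + 2m}\, m!\, \Gamma(\lambda + m + 1)}\, r^{2m},
\]
matching of powers of $r$ forces the relation $L - N - k - 1 + m = p$. Reindexing $\ell = L - N$ (so $\ell$ runs over $\{1,\dots,N\}$, $0 \leq j \leq \ell - 1$, $0 \leq k \leq \ell - j - 1$), the contributing quadruples split naturally into two groups: those with $\ell \leq p + 1$, which already occur for $N = p + 1$, and those with $\ell > p + 1$, which can arise only when $N > p + 1$ and must satisfy $k \geq \ell - p - 1$. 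The structural factors of each summand (the Bessel-series coefficient and the polynomial $P_{2j+1}(\xi_0)$) depend on $N$ only through the combination $\ell = L - N$, so the only remaining $N$-dependence lives in the coefficients $c_{L,j,k}^{N}$ themselves and in which indices appear at all.

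The proof then reduces to two claims. First, for each triple with $1 \leq \ell \leq p + 1$ the coefficient $c_{L,j,k}^{N}$ must coincide with its value at $N = p + 1$; this should follow by retracing the derivation of \eqref{eq21}, namely $S_N = -Q_N(\partial)\bigl(\sum_{i=0}^{2N} A_i^{N}\,|x|^{2i}\bigr)$, and verifying that the subsystem of boundary-matching equations determining the low-index constants $A_i^{N}$ decouples from the high-order data once $N \geq p + 1$. Second, the extra contribution coming from triples with $\ell > p + 1$ and $k \geq \ell - p - 1$ must sum to zero. This latter cancellation is the main obstacle: each individual extra term is a nonzero combination of $P_{2j+1}(\xi_0)$ and ratios of Gamma functions at shifts of $n/2$, and their collective vanishing amounts to a nontrivial combinatorial identity of the same flavor as the one underlying Lemma \ref{L4}. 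Following the pattern the authors indicate for Section 8, a natural strategy is to encode the required sum via a generating function (or a contour-integral representation for $1/\Gamma$) and to extract the cancellation from a residue identity, in the spirit of the Graham--Knuth--Patashnik style manipulations already invoked in the paper.
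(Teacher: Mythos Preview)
Your decomposition has a genuine gap at the very first claim. You assert that for each triple with $1\le\ell\le p+1$ the coefficient $c^{N}_{N+\ell,j,k}$ should coincide with $c^{p+1}_{(p+1)+\ell,j,k}$, and you hope to deduce this from a decoupling of the boundary-matching system for the constants $A_i^N$. But the explicit formulas in the paper (Lemma~\ref{L13} for $A_L$ and formula~\eqref{eq74} for $c_{L,j,k}$) show that this is false: after writing $L=N+\ell$, the coefficient $c_{N+\ell,j,k}$ still carries the factors
\[
(N-j)!\,\binom{N+\ell-1+\tfrac{n}{2}}{N-j}\,,\qquad (N-\ell)!\,,\qquad \frac{\binom{N+\tfrac{n}{2}-\tfrac12}{N}}{\binom{N-\tfrac12}{N}}\,,
\]
each of which depends on $N$ in a nontrivial way (for instance the last one equals $\Gamma(N+\tfrac{n}{2}+\tfrac12)\Gamma(\tfrac12)/\Gamma(N+\tfrac12)\Gamma(\tfrac{n}{2}+\tfrac12)$). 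So the individual summands at fixed $\ell$ do \emph{not} stabilize, and there is no decoupling in the linear system: every $A_L$ genuinely depends on $N$. Consequently your second claim is also misdirected: it is not that the ``extra'' terms with $\ell>p+1$ cancel among themselves while the ``old'' terms sit unchanged; rather, all terms move with $N$ and the $N$-independence only appears after the full sum over $\ell$ (equivalently over $s$) is performed.

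What the paper actually does is precisely this last step: it inserts the explicit value~\eqref{eq74} of $c_{L,j,k}$, writes $a_{2p+1}^N$ as a triple sum over $j$, $i$, and $s$ (with $s$ running from $p-i+1$ to $N$), and then evaluates the innermost sum over $s$ in closed form via a separate combinatorial identity (Lemma~\ref{L15}, proved with the triple-binomial identity~\eqref{eq74bis}). The closed form in Lemma~\ref{L15}, when combined with the $N$-dependent prefactors, produces expression~\eqref{eq78}, in which $N$ has vanished identically. Thus the stabilization is a single ``miraculous'' identity for the $s$-sum, not a two-stage argument of the kind you sketch. If you want to salvage your route, you would have to show that the difference $a_{2p+1}^{N}-a_{2p+1}^{N-1}$ vanishes for $N\ge p+2$; but unpacking that difference leads you straight back to an identity equivalent to Lemma~\ref{L15}.
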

If $p\geq0$ and $p+1 \leq N $ we set $a_{2p+1}= a_{2p+1}^N$\,. We
need an estimate for the $a_{2p+1}^N$\,, which will be proved as
well in Section 8.

\begin{lemma}\label{L6}
We have, for a constant $C$ depending only on $n$\,,

\begin{equation}\label{eq59bis}
|a_{2p+1}| \leq \frac{C}{p!\,4^p} \,\sum_{j=0}^p \|P_{2j+1}\|_\infty
\,, \quad 0 \le p \le N-1\,,
\end{equation}

and

\begin{equation}\label{eq59tris}
|a_{2p+1}^N | \le  \frac{C}{4^p} \, \frac{ {N+\n -\frac{1}{2}\choose
N} }{ {N-\frac{1}{2}\choose N}} \, \sum_{j=0}^{N-1}
\|P_{2j+1}\|_\infty  \,, \quad 1 < N \le p \,\,.
\end{equation}

\end{lemma}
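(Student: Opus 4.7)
My plan is to read off $a_{2p+1}^N(\xi_0)$ directly from formula \eqref{eq54tris} by inserting the Taylor series of the Bessel-type function
\[
G_\lambda(r) = \sum_{m\ge 0} \frac{(-1)^m}{m!\,\Gamma(m+\lambda+1)\,2^{2m+\lambda}}\,r^{2m}
\]
and matching powers of $r$. Setting $\xi = r\xi_0$ with $|\xi_0|=1$, the factor $P_{2j+1}(\xi)|\xi|^{2(L-N-j-1-k)}$ contributes $r^{2L-2N-1-2k}$, so the coefficient of $r^{2p+1}$ is picked up at the unique Bessel index $m = p-L+N+1+k \ge 0$. Consequently
\begin{equation*}
a_{2p+1}^N(\xi_0)= \imath\sum_{L,j,k} c_{L,j,k}\,P_{2j+1}(\xi_0)\,\alpha_{L,j,k,p,N},
\end{equation*}
where $\alpha_{L,j,k,p,N}$ is an explicit product of a sign, a power of $2$ and an inverse Gamma factor, and the triple sum runs over the same ranges as in \eqref{eq54tris}.

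For \eqref{eq59bis}, where $0 \le p \le N-1$, Lemma \ref{L5} lets me replace $a_{2p+1}^N$ by $a_{2p+1}^{p+1}$, reducing to the case $N=p+1$. Then $L$ ranges over $\{p+2,\dots,2p+2\}$, the inner summation ranges collapse accordingly, and the unique Bessel index $m$ lies between $0$ and $p+1+k$. Using $|P_{2j+1}(\xi_0)|\le\|P_{2j+1}\|_\infty$ and the explicit Bessel denominator $m!\,\Gamma(m+\lambda+1)\,2^{2m+\lambda}$, each term is at most a constant multiple of $\|P_{2j+1}\|_\infty/(p!\,4^p)$, which upon summing in $j$ gives \eqref{eq59bis}.

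For \eqref{eq59tris}, with $N\le p$ and $N>1$, the sum runs over $L\in[N+1,2N]$ and the indices $m=p-L+N+1+k$, $\lambda=\n+2(L-N)-1-k$ satisfy $2m+\lambda=2p+1+k+\n$, so that $2^{-(2m+\lambda)}$ yields a $4^{-p}$ factor (after absorbing the bounded $k$-contribution, which is $\le 2^{N-1}\le 2^{p-1}$). The remaining Gamma factor $1/\Gamma(m+\lambda+1)=1/\Gamma(p+\n+L-N+1)$, combined with the constants $c_{L,j,k}$ which themselves carry Gamma/factorial data through \eqref{eq19} and \eqref{eq11bis}, rearranges via the identity $\Gamma(A+\n)/\Gamma(A)=\Gamma(\n)\binom{A+\n-1}{A}$ into the binomial ratio $\binom{N+\n-\frac{1}{2}}{N}/\binom{N-\frac{1}{2}}{N}$, with the extreme value $L=2N$ furnishing the dominant contribution.

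The chief obstacle is this last identification. One must unpack the dependence of $c_{L,j,k}$ on $L,j,k$ through the chain \eqref{eq21}$\to$\eqref{eq19}$\to$\eqref{eq11}, and then reduce a triple sum of ratios of Gamma functions to the precise binomial ratio claimed. This is of the same flavour as the manipulations demanded by Lemmas \ref{L4} and \ref{L5}, both postponed to Section 8, and I expect the same family of Gamma-function and combinatorial identities (in the spirit of \cite{GKP}) to do the job.
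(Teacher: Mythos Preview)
Your overall strategy---insert the Taylor expansion of $G_\lambda$ into \eqref{eq54tris} and match powers of $r$---is exactly what the paper does. The gap is that you have not computed $c_{L,j,k}$, and this computation is where both inequalities actually come from.

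The constants $c_{L,j,k}$ are determined by the chain $A_L \to c_{L,j} \to c_{L,j,k}$, with $A_L$ the coefficients in \eqref{eq11}. The paper first proves (Lemma~\ref{L13}) that
\[
A_L = c_n\,\frac{(-1)^{L+N}\binom{L+m-N-1}{L-N}\binom{N+m}{2N-L}}{(2N)!\binom{L}{N}}\,,\qquad m=\tfrac{n-1}{2}\,,
\]
and then obtains the closed form \eqref{eq74}:
\[
c_{L,j,k}= \imath(-1)^k c_n\,\gamma_{2j+1}\,
\frac{2^{k}(N-j)!\,(n-1)\binom{L-1+\n}{N-j}\binom{\n+j+L-N-1}{k}}
{(2N-L)!\,(L-N+\n-\tfrac12)\,(L-N-j-1-k)!}\,
\cdot\frac{\binom{N+\n-\frac12}{N}}{\binom{N-\frac12}{N}}\,.
\]
The point you are missing for \eqref{eq59tris} is that the binomial ratio $\binom{N+\n-\frac12}{N}\big/\binom{N-\frac12}{N}$ is \emph{already} an explicit factor in $c_{L,j,k}$; it does not have to be manufactured by any Gamma or triple-binomial identity. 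Once you pull this factor and the $4^{-p}$ out front, the remaining double sum in $k,s$ is bounded term by term using only the crude inequality $\Gamma(s+\n+N)/\Gamma(\n+p+s+1)\le 1/(\n+p+s)$ (valid because $N\le p$), and one gets the absolute bound $e^2$ for that sum. No combinatorial identities of \cite{GKP} type are invoked here.

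For \eqref{eq59bis} your reduction to $N=p+1$ via Lemma~\ref{L5} is legitimate, but you then still face the triple sum with the constants $c_{L,j,k}$, and your assertion that ``each term is at most a constant multiple of $\|P_{2j+1}\|_\infty/(p!\,4^p)$'' is unsupported. The paper proceeds differently: in the course of proving Lemma~\ref{L5} it uses the nontrivial summation Lemma~\ref{L15} to collapse the sum in $s$ and arrive at the compact formula \eqref{eq78}, in which $N$ has disappeared and the prefactor $1\big/\big(2^{2p+1}\Gamma(p+\tfrac32)\big)$ is manifest. From \eqref{eq78} the bound \eqref{eq59bis} is immediate: the Gamma ratio in the inner $i$-sum is at most $1$, and the factorial structure delivers the $1/(p!\,4^p)$. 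In short, the work you anticipate for \eqref{eq59tris} is unnecessary, while the step you regard as routine for \eqref{eq59bis} actually rests on Lemma~\ref{L15}.
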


\section{Proof of the combinatorial Lemmata }

This section will be devoted to prove lemmas \ref{L4}, \ref{L5} and \ref{L6} stated and
used in the preceding sections. The arguments are parallel to those
of the even case, but many different computations have to be
performed . Owing to the intricate combinatorics involved we prefer
to write carefully down all calculations.

For the proof of Lemma \ref{L4} (see Section 7) we need to have
explicit expressions for the constants $C_{2j+1}$ and for this we
need to carefully trace back the path that led us to them. To begin
with we need a formula for the coefficients $A_L$ in \eqref{eq11}
and for that it is essential to have the expression for a
fundamental solution $E_N= E_N^n$ of $(-\Delta)^{1/2}\triangle^N$.
Recall that $\triangle^N( E_{N})(x) = c_{n}|x|^{1-n}$ in $\Rn$,
where the normalization constant $c_{n}$ is chosen so that
$\widehat{c_{n}/|x|^{n-1}}(\xi)=1/|\xi |$. One has

$$
E_N(x)= c_{n} |x|^{2N+1 -n}(\alpha(n,N)+\beta(n,N)\log|x|^2)\;,
$$
where $\alpha$ and $\beta$ are constants that depend on $n$ and
$N$\,.To write in close form $\alpha$ and $\beta$ we consider
different cases. Write $m=\displaystyle\frac{n-1}{2}$

Case 1: $n$ is even. Then
\begin{equation*}
\begin{split}
 \alpha(n,N)  & =\left(  \prod_{j=0}^{N-1} (2N+1-n-2j)\, (2N+1 -2(j+1)) \right)^{-1}\\
             & = \left(  {N -m \choose N} (2N)!\right)^{-1} \\
 \text{and} &   \\
 \beta(n;N) &=0.
\end{split}
\end{equation*}

Case 2: $n$ is odd  and $2N+1-n<0$. Then

\begin{equation*}
\begin{split}
 \alpha(n,N)  & =\left(  \prod_{j=0}^{N-1} (2N+1-n-2j)\, (2N+1 -2(j+1)) \right)^{-1}\\
             & = \left(\frac{(-2)^N (m-1)!}{(m-N-1)!}    \right)^{-1} \frac{2^{N-1}(N-1)!}{(2N-1)!}
              =\frac{(-1)^N (m-N-1)! (N-1)!}{2 (m-1)! (2N-1)!}\\
 \text{and} &   \\
 \beta(n;N) &=0.
\end{split}
\end{equation*}

Case 3:   $n$ is odd and $2N+1-n\ge 0$. Then

$$
\beta(n,N)=  \left( (-1)^{m+1} 2 (m-1)! \, (N-m)!\,
\frac{(2N-1)!}{(N-1)!}  \right)^{-1}
$$
and $\alpha(n,N)$ is a constant which we don't need to precise.

Recall that the constants   the constants $A_0, A_1,..., A_{2N}$ are
chosen so that the function (see \eqref{eq11})
\begin{equation*}
\varphi(x)= E(x)\,\chi_{\BC}(x) + (A_0+A_1\,|x|^2 +...+
A_{2N}\,|x|^{4N})\,\chi_B(x)\,,
\end{equation*}
and all its partial derivatives of order not greater than $2N$
extend continuously up to $\partial B$.

\begin{lemma}\label{L13} For $L=N+1, \dots, 2N$ we have
$$
A_{L}= c_{n}\, \frac{(-1)^{L+N}{L+m-N-1 \choose L-N}{N+m \choose 2N-L}}{
(2N)! {L \choose N}}
$$

\end{lemma}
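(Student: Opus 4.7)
The plan is to exploit the fact that every summand in $\varphi$ is radial, so requiring $\varphi\in C^{2N}$ across $\partial B$ amounts---via the chain rule applied to the smooth change of variables $r\mapsto w:=r^{2}$ near $r=1$---to matching the radial profile $h(w):=E_{N}(\sqrt{w})=c_{n}\,w^{N-m}(\alpha+\beta\log w)$ (with $m=(n-1)/2$) together with its first $2N$ derivatives at $w=1$ by the polynomial $p(w):=\sum_{L=0}^{2N}A_{L}w^{L}$. Since $p$ has degree $2N$, this is equivalent to saying that $p(w)$ is the degree-$2N$ Taylor polynomial of $h$ at $w=1$. Writing
\begin{equation*}
p(w) \;=\; \sum_{k=0}^{2N}\frac{h^{(k)}(1)}{k!}(w-1)^{k}
\end{equation*}
and reading off the coefficient of $w^{L}$ via the binomial theorem yields
\begin{equation*}
A_{L} \;=\; \sum_{k=L}^{2N}(-1)^{k-L}\binom{k}{L}\frac{h^{(k)}(1)}{k!},
\end{equation*}
while a Leibniz computation gives $h^{(k)}(1)/(c_{n}k!)=\alpha\binom{N-m}{k}+\beta\sum_{j=1}^{k}\binom{N-m}{k-j}(-1)^{j-1}/j$.

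In Cases~1 and~2 of the preceding classification $\beta=0$, so only the first term survives. The double sum collapses by the trinomial identity $\binom{k}{L}\binom{N-m}{k}=\binom{N-m}{L}\binom{N-m-L}{k-L}$ together with the alternating-sum identity $\sum_{j=0}^{M}(-1)^{j}\binom{a}{j}=(-1)^{M}\binom{a-1}{M}$, and the negation rule $\binom{N-m-L-1}{2N-L}=(-1)^{L}\binom{N+m}{2N-L}$ (valid because $L\geq N+1$ makes the upper entry negative) leaves $A_{L}=c_{n}\alpha\binom{N-m}{L}\binom{N+m}{2N-L}$. Substituting the explicit value of $\alpha(n,N)$ and rewriting $\binom{N-m}{L}/\binom{N-m}{N}$ through Gamma functions as $(-1)^{L-N}(N!/L!)\binom{L+m-N-1}{L-N}$ recovers the claimed formula.

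In Case~3 the monomial $\alpha\,w^{N-m}$ has degree $N-m\leq N<L$ and so contributes nothing to the coefficient of $w^{L}$; all of $A_{L}$ comes from the logarithmic term. Using $1/j=\int_{0}^{1}t^{j-1}dt$ turns the inner sum into a Beta integral whose closed form, for $k\geq N-m+1$, is $(-1)^{k-1-N+m}(k-N+m-1)!(N-m)!/k!$; plugging this back into the outer sum and applying the hockey-stick identity $\sum_{i=0}^{M}\binom{a+i}{a}=\binom{a+M+1}{a+1}$ evaluates the sum in $k$ to $\binom{N+m}{2N-L}$ times a ratio of factorials. The explicit value of $\beta(n,N)$ is then exactly what is required for that ratio to collapse to the same combinatorial expression as in Cases~1 and~2.

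The main obstacle is pure bookkeeping: three different normalizations of the fundamental solution must be handled in parallel; signs depending on the parity of $n$ and on whether $N\geq m$ must be tracked carefully; and the products of factorial and Gamma factors, which arise differently in each of the three cases, must all reduce to the single uniform expression of the statement. This is precisely the style of multi-sum combinatorial calculation alluded to in the reference to \cite{GKP} at the end of Section~7.
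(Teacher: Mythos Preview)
Your proposal is correct and follows essentially the same route as the paper: both pass to the radial variable $t=w=|x|^{2}$, recognize that the polynomial $\sum_{L}A_{L}w^{L}$ must be the degree-$2N$ Taylor polynomial at $w=1$ of $c_{n}\,w^{N-m}(\alpha+\beta\log w)$, extract $A_{L}$ by expanding $(w-1)^{k}$, and then treat the three cases separately. The only differences are cosmetic: in Cases~1--2 the paper sums via the upper-summation identity \cite[(5.9)]{GKP} where you use the trinomial/alternating-sum/negation combination, and in Case~3 the paper computes $\bigl(\tfrac{d}{dt}\bigr)^{i}(t^{N-m}\log t)$ directly and again invokes \cite[(5.9)]{GKP}, whereas you reach the same closed form for the derivative through Leibniz plus the Beta-integral evaluation and then finish with the hockey-stick identity. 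Your observation that in Case~3 the monomial $\alpha\,w^{N-m}$ contributes nothing to $A_{L}$ for $L>N-m$ simply because its own Taylor polynomial is itself is the same fact the paper records by noting the product $(N-m)\cdots(N-m-i+1)$ vanishes for $i\geq N+1$.
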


\begin{proof}

Let $m= (n-1)/2$ and set $t=|x|^2$\,, so that

\begin{equation} \label{eq69}
E_N^n(x)\equiv E(t)=t^{N-m}(\alpha+\beta\log(t))
\end{equation}

Let $P(t)$ be the polynomial $\sum_{L=0}^{2N}A_Lt^L.$ By Corollary 2
in Section 2 we need that
$$
P^{k)}(1)=E^{k)}(1) \;,  \quad  0 \le k \le 2N\;.
$$

By Taylor's expansion we have that
$P(t)=\sum_{i=0}^{2N}\frac{E^{i)}(1)}{i!}(t-1)^{i},$ and hence, by
the binomial formula applied to $(t-1)^i$\,,
$$
A_L=\sum_{i=L}^{2N}\frac{E^{i)}(1)}{i!}(-1)^{i-L}{i \choose L}\;,
\quad  0 \le L \le 2N\;.
$$
Now we want to compute $E^{i)}(1)$ \,.\,  Clearly

$$
\left( \frac{d}{dt}\right)^i (t^{N-m})=(N-m ) \cdots
(N-m-i+1)t^{N-m-i}
$$
and it is zero when $m$ is integer and $i> N-m$. Notice that the
logarithmic term in \eqref{eq69} only appears when the dimension $n$
is odd (then $m$ is integer) and $N\ge m$. In this case, for each
$i\geq N+1$
$$
\left( \frac{d}{dt}\right)^i(t^{N-m}\log t )=
(N-m)!(-1)^{i-N+m-1}(i-N+m-1)!\;t^{-i+N-m}\,.
$$
Hence,  for $i\geq N+1$\;, we obtain
$$
\frac{E^{i)}(1)}{c_{n}}=\alpha(n,N)(N-m)\cdots(n-m-i+1)+
\beta(n,N)(N-m)!(-1)^{i-N+m-1}(i-N+m-1)!
$$
Consequently,
\begin{equation} \label{eq70}
\begin{split}
\frac{A_L}{c_{n}} & =(-1)^L\alpha(n,N)\sum_{i=L}^{2N}(N-m)\cdots
(N-m-i+1)\frac{(-1)^i}{i!}{i \choose L} + \\
& (-1)^{L-N+m-1}\beta(n,N)(N-m)! \sum_{i=L}^{2N}(i-N+m-1)!\frac{{i
\choose L}}{i!}.
\end{split}
\end{equation}
Let's remark that for the case $n$ odd and $N\geq m$ the first term
in \eqref{eq70} is zero, while for the cases $n$ even or $n$ odd and
$N < m$ the second term is zero because $\beta(n,N) = 0.$ This
explains why we compute below the two terms separately.

For the first term we show that
\begin{equation}\label{eq71}
\sum_{i=L}^{2N}(N-m)\cdots (N-m-i+1)\frac{(-1)^i}{i!} {i\choose L} =
(-1)^L {N-m \choose L}{m+N \choose 2N-L}
\end{equation}
Indeed, the left hand side of \eqref{eq71} is, setting $k=i-L$,

\begin{equation*}
\begin{split}
&
\frac{1}{L!}\sum_{k=0}^{2N-L}(N-m)\cdots(N-m-L-k+1)\frac{(-1)^{L+k}}{k!}
\\
& = (-1)^L{N-m \choose L} \sum_{k=0}^{2N -L}{m+L-N+k-1 \choose k}
\\
& = (-1)^L {N-m \choose L}{N+m  \choose 2N -L}\,,
\end{split}
\end{equation*}
where the last identity comes from (\cite [(5.9),p.159]{GKP})\,.

To compute the second term we first show that
\begin{equation}\label{eq72}
\sum_{i=L}^{2N}(i-N+m-1)!\frac{1}{i!}{i \choose L}=
\frac{(L-N+m-1)!}{L!}{N+m \choose 2N -L}\,.
\end{equation}
As before, setting $k=i-L$ and applying \cite [(5.9),p.159]{GKP}, we
see that the left hand side of \eqref{eq72} is

\begin{equation*}
\begin{split}
& \frac{1}{L!}\sum_{k=0}^{2N-L}(L+k-N+m-1)!\frac{1}{k!} \\
= & (L-N+m-1)! \sum_{k=0}^{2N -L}{m+L-N+k-1 \choose k} \\
= & \frac{(L-N+m-1)!}{L!}{N+m \choose 2N -L}\,. \\
\end{split}
\end{equation*}

We are now ready to complete the proof of the lemma distinguishing
$3$ cases.

\vspace{0.4 cm}

Case 1: $n$ even.

Since $\beta(n,N)=0$, replacing in \eqref{eq70} $\alpha(n,N)$ by its
value and using \eqref{eq71} we get, by elementary arithmetics,
\begin{equation*}
\begin{split}
\frac{A_L}{c_{n}}= &(-1)^L \frac{(-1)^L {N-m \choose L}{N+m  \choose 2N -L}}{2N {N -m \choose N} (2N-1)!} \\
= & (-1)^{L+N}\frac{{L+m-N-1 \choose L-N}{N+m \choose 2N-L}}{(2N)! {L \choose N}}\,.\\
\end{split}
\end{equation*}

\vspace{0.4 cm}

Case 2: $n$ is odd  and $2N+1-n<0$.

As in case 1 $\beta(n,N)=0$, and we proceed similarly using
\eqref{eq71} to obtain

\begin{equation*}
\begin{split}
\frac{A_L}{c_{n}}= & {N-m \choose L}{N+m  \choose 2N -L} \frac{(-1)^N (m-N-1)!
(N-1)!}{2 (m-1)! (2N-1)!}
\\[2mm]
= &(-1)^{L+N}\frac{{L+m-N-1 \choose L-N}{N+m \choose 2N-L}}{ (2N)! {L \choose N}} \, .\\
\end{split}
\end{equation*}

\vspace{0.4 cm}

Case 3: $n$ is odd and $2N+1-n\ge 0$.

Replacing in \eqref{eq70} $\alpha(n,N)$ and $\beta(n,N)$ by their
values and using \eqref{eq72} we get, by elementary arithmetics,
\begin{equation*}
\begin{split}
\frac{A_L}{c_{n}}= &(-1)^{L-N+m-1}\beta(n,N)(N-m)!  \frac{(L-N+m-1)!}{L!}{N+m \choose 2N -L} \\
= &
\frac{(-1)^{N+L} (N-1)! (L-N+m-1)!}{2 (m-1)! (2N-1)! L!}{N+m \choose 2N -L}\\[2mm]
= &(-1)^{L+N}\frac{{L+m-N-1 \choose L-N}{N+m \choose 2N-L}}{ (2N)! {L \choose N}} \, \\
\end{split}
\end{equation*}

\end{proof}

\begin{proof}[Proof of Lemma \ref{L4}]
 Recall that (see \eqref{eq55bis})
$$
C_{2j} = \sum_{L=N+1+j}^{2N} c_{\,L,\,j,\,L-N-j-1}\,G_{\n+L-N+j}(0)
\,.
$$
Thus, we have to compute the constants $c_{L,j,k}$ appearing in the
expression \eqref{eq54tris} for $\widehat{S\,\chi_{B}}(\xi)$\,. For
that we need the constants $c_{L,j}$ appearing in the formula
\eqref{eq21} for $S(x)$\,. We start by computing
$P_{2j+1}(\partial)\Delta^{N-j} (|x|^{2L})$\,. Using \eqref{eq19}
 and Lemma 4 one gets
\begin{equation*}
\begin{split}
P_{2j+1}(\partial)\Delta^{N-j} (|x|^{2L}) = \frac{ 2^{2N+1} L!
(N-j)!}{(L-N-j-1)!}{L-1+\n \choose N-j} P_{2j+1}(x) |x|^{2(L-N-j-1)}
\end{split}
\end{equation*}
if $L-N-j-1 \ge 0$ (and $=0$ if $L-N-j-1 < 0$)\,.

As in  \eqref{eq24} (Section 3), we express
$P_{2j+1}(\partial)\Delta^{L-N-j-1}G_{\n}(\xi)$ using Lemma \ref{L2}
applied to $f(r)=G_\n(r)$ and the homogeneous polynomial $L(x)=
P_{2j+1}(x)\,|x|^{2(L-N-j-1)}$\,. We obtain

\begin{equation*}
\begin{split}
P_{2j+1}(\partial)\Delta^{L-N-j-1}G_{\n}(\xi)  &  = \sum_{k\ge 0}\frac{1}{2^k k!}\Delta^{k} (  P_{2j+1}(x) | x|^{2(L-N-j-1}  )\,\left(\frac{1}{r}\frac{\partial}{\partial r} \right)^{2(L-N)-1-k} G_{\n} (\xi)\\
&  = \sum_{k\ge 0}\frac{(-1)^{k+1}}{2^k k!}\Delta^{k} (  P_{2j+1}(x) | x|^{2(L-N-j-1}  )\, G_{\n+2(L-N)-1-k} (\xi)\\
 & = \sum_{k=0}^{L-N-j-1} \frac{(-1)^{k+1}}{2^k k!}  4^{k}\frac{ (L-N-j-1) !}{(L-N-j-1-k)!} k! {\n+j+L-N-1 \choose k} \\
 & \quad \times  P_{2j+1}(\xi) | \xi|^{2(L-N-j-1-k)}  \, G_{\n+2(L-N)-1-k}(\xi) .\\
\end{split}
\end{equation*}

In view of the definitions of $Q(x)$ and $S(x)$,
\begin{equation*}
\begin{split}
S(x) & = -Q(\partial)\left (  \sum_{L=0}^{2N}A_{L}|x|^{2L} \right ) = -  \sum_{L=0}^{2N}A_{L} \sum_{j=0}^N \gamma_{2j+1} P_{2j+1 } (\partial) \Delta^{N-j}(|x|^{2L}) \\
& = - \sum_{L=N+1}^{2N} \sum_{j=0}^{L-N-1} A_{L} \gamma_{2j+1}
\frac{ 2^{2N+1} L!
(N-j)!}{(L-N-j-1)!}{L-1+\n \choose N-j} P_{2j+1}(x) |x|^{2(L-N-j-1)}\\
& = \sum_{L=N+1}^{2N} \sum_{j=0}^{L-N-1}  c_{L,j}P_{2j+1}(x)
|x|^{2(L-N-j-1)}\,,
\end{split}
\end{equation*}
where the last identity defines the $c_{L,j}$\,. In Section 5
\eqref{eq54tris} we set
\begin{equation*}
\begin{split}
\widehat{S\,\chi_{B}}(\xi) & = S(\imath\, \partial)\,
\widehat{\chi_B}(\xi)\\
& = \imath (2\pi)^{n/2} \sum_{L=N+1}^{2N} \sum_{j=0}^{L-N-1}
c_{L,j}\,(-1)^{L-N}\,\,P_{2j+1}(\partial)\,\,\triangle^{L-N-j-1}\,G_{\n}(\xi)\\
 = (2\pi)^{n/2}&\sum_{L=N+1}^{2N} \sum_{j=0}^{L-N-1} \sum_{k=0}^{L-N-j-1}
c_{\,L,\,j,\,k}\,\,P_{2j+1}(\xi)\,\,|\xi|^{2(L-N-j-1-k)}
G_{\n+2(L-N)-1-k}(\xi)\,.
\end{split}
\end{equation*}
Consequently,
\begin{equation*}
\begin{split}
c_{L,j,k} & =\imath c_{L,j} (-1)^{L-N}  \frac{(-1)^{k+1}}{2^k}  4^{k}\frac{ (L-N-j-1) !}{(L-N-j-1-k)!}  {\n+j+L-N-1 \choose k} \\[2mm]
& = \imath (-1)^{L+k+N}A_{L} \gamma_{2j+1} \frac{2^{2N+1} L!
(N-j)!}{(L-N-j-1-k)!}{L-1+\n \choose N-j} 2^{k}  {\n+j+L-N-1 \choose
k}\,.
\end{split}
\end{equation*}
Replacing $A_L$ by the formula given in lemma \ref{L13} and
performing some easy arithmetics we get

\begin{equation}\label{eq74}
\begin{split}
c_{L,j,k}&= \imath (-1)^k   c_{n}\gamma_{2j+1}  \frac{2^{2N+1} L! (N-j)!    {L-1+\n \choose N-j} 2^{k}  {\n+j+L-N-1 \choose k}}{(L-N-j-1-k)!}   \frac{{L+m-N-1 \choose L-N}{N+m \choose 2N-L}}{ (2N)! {L \choose N}}\\[2mm]
&=  \imath (-1)^k  c_{n} \gamma_{2j+1}  \frac{2^{k} (N-j)! (n-1) {L-1+\n
\choose N-j} {\n+j+L-N-1 \choose k} {N+\n -\frac{1}{2} \choose
N}}{(2N-L)! (L-N+\n -\frac{1}{2})  (L-N-j-1-k)! {N-\frac{1}{2}
\choose N}}\; .
\end{split}
\end{equation}

The final computation of the $C_{2j+1}$ is as follows.

\begin{equation*}
\begin{split}
C_{2j+1} &  =  \sum_{L=N+1+j}^{2N} c_{\,L,\,j,\,L-N-j-1}\,G_{\n+L-N+j}(0)     \\
& = \hspace{1 cm} \text{[by the explicit value of $G_p(0)$ given in \eqref{eq75} below]}\\
& =  \sum_{L=N+1+j}^{2N} c_{\,L,\,j,\,L-N-j-1} \frac{1}{2^{\n+L-N+j}\Gamma(\n+L-N+j+1)}\\[2mm]
& = \qquad \text{[by \eqref{eq74}]}\\
& =-\imath  c_{n} \sum_{L=N+1+j}^{2N} \frac{(-1)^{L-N-j} \gamma_{2j+1}
2^{L-N-j-1} (N-j)! (n-1)  {L-1+\n \choose N-j} {\n+j+L-N-1 \choose
L-N-j-1} {N+\n-1/2 \choose N}}{ (L-N+\n -\frac{1}{2}) (2N-L)!
{N-1/2 \choose N} 2^{\n+L-N+j}\Gamma(\n+L-N+j+1)} \\[2mm]
& =- \frac{\imath  c_{n}\gamma_{2j+1} (N-j)! (n-1) {N+\n-1/2 \choose N} } { 2^{\n+2j+1} {N-1/2 \choose N}  }\\
&\hspace{3 cm}  \sum_{L=N+1+j}^{2N} \frac{ (-1)^{L+N+j} {L-1+\n \choose N-j} {\n+j+L-N-1 \choose L-N-j-1} }{ (L-N+\n -\frac{1}{2}) (2N-L)! \Gamma(\n+L-N+j+1) } \\[2mm]
&=  \qquad  [\text{setting}\quad  L=i+N+j+1]  \\[2mm]
\end{split}
\end{equation*}
\begin{equation*}
\begin{split}
& =\frac{\imath \, c_{n}\gamma_{2j+1} (N-j)! (n-1) {N+\n-1/2 \choose N} } { 2^{\n+2j+1} {N-1/2 \choose N}  }\\
&\hspace{3 cm}  \sum_{i=0}^{N-j-1} \frac{ (-1)^{i} \displaystyle{N+i+j+\n \choose N-j} {\n+2j+i \choose i} }{ (i+j+\n +\frac{1}{2}) (N-j-i-1)! \Gamma(\n+i+2j+2) } \\[2mm]
&  = \qquad [\text{because }   \Gamma(\n+i+2j+2)= \Gamma(\n +2j) \prod_{k=0}^{i+1} (\n+2j+k) ]\\[2mm]
& =\frac{\imath\,  c_{n}\gamma_{2j+1} (N-j)! (n-1) {N+\n-1/2 \choose N} } { 2^{\n+2j+1} {N-1/2 \choose N} \Gamma(\n +2j) }\\
&\hspace{3 cm}  \sum_{i=0}^{N-j-1} \frac{ (-1)^{i} \displaystyle{N+i+j+\n \choose N-j} {\n+2j+i \choose i} }{ (i+j+\n +\frac{1}{2}) (N-j-i-1)!   \prod_{k=0}^{i+1} (\n+2j+k) } \\
& =  \qquad [\text{using Lemma \ref{sublema2} below }]\\
& =\frac{\imath \, c_{n}\gamma_{2j+1} (N-j)! (n-1) {N+\n-1/2 \choose N} } { 2^{\n+2j+1} {N-1/2 \choose N} \Gamma(\n +2j) } \frac{ 2 {N+1/2\choose N-j}}{(2N+1) (2j+\n)}\frac{\Gamma (\n +j+ 1/2)}{\Gamma (\n+ N +1/2)}\\
&= \qquad \text{[substituting  the value given in \eqref{eq7} in $\gamma_{2j+1}$]}\\
&=  c_{n}\left(\frac{\pi}{2}\right)^{\n}
\frac{(n-1)\Gamma(\frac{1}{2})}{\Gamma(\n +\frac{1}{2})}
\frac{(-1)^{j}}{4^{j} (2j+1) \Gamma(\n +2j +1)}\, \\
&= \qquad \text{[recalling the exact value of $c_{n}$]}\\
& = \frac{1}{2^{\n}} \; \frac{(-1)^{j}}{4^{j} (2j+1) \Gamma(\n +2j +1)} .
\end{split}
\end{equation*}

\end{proof}

\begin{lemma}\label{sublema2} For each $j=0, \dots , N-1$
$$
\sum_{i=0}^{N-j-1} \frac{ (-1)^{i} \displaystyle{N+i+j+\n \choose
N-j} {\n+2j+i \choose i} }{ (i+j+\n + \frac{1}{2}) (N-j-i-1)!
\prod_{k=0}^{i+1} (\n+2j+k) } =  \frac{ 2 {N+1/2\choose N-j}}{(2N+1)
(2j+\n)}\frac{\Gamma (\n +j+ \frac{1}{2})}{\Gamma (\n+ N
+\frac{1}{2})}
$$
\end{lemma}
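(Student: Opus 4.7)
The plan is to recognize the sum on the left as a terminating $_{3}F_{2}$ hypergeometric series evaluated at $1$, and then to apply the Pfaff--Saalsch\"utz summation.

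First I would simplify the binomial product. A direct computation shows
$$
\binom{N+i+j+\tfrac{n}{2}}{N-j}\binom{\tfrac{n}{2}+2j+i}{i}
=\frac{(N+i+j+\tfrac{n}{2})!}{(N-j)!\,i!\,(\tfrac{n}{2}+2j)!},
$$
while $\prod_{k=0}^{i+1}(\tfrac{n}{2}+2j+k)=(\tfrac{n}{2}+2j+i+1)!/(\tfrac{n}{2}+2j-1)!$ . Substituting these and pulling out every factor independent of $i$, the sum becomes
$$
S=\frac{1}{(\tfrac{n}{2}+2j)(N-j)!}\sum_{i=0}^{p}
\frac{(-1)^{i}(N+i+j+\tfrac{n}{2})!}{i!\,(p-i)!\,(i+j+\tfrac{n}{2}+\tfrac12)\,(\tfrac{n}{2}+2j+i+1)!},
$$
where $p=N-j-1$. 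I would then rewrite each factor in Pochhammer symbol form: use $(-p)_i=(-1)^i p!/(p-i)!$ for the falling factor, the identity $1/(a+i)=(a)_i/[a\,(a+1)_i]$ with $a=j+\tfrac{n}{2}+\tfrac12$, and $(\tfrac{n}{2}+2j+i+1)!=(\tfrac{n}{2}+2j+1)!(\tfrac{n}{2}+2j+2)_i$. This identifies the inner sum as
$$
{}_{3}F_{2}\!\begin{pmatrix}N+j+\tfrac{n}{2}+1,\ -p,\ j+\tfrac{n}{2}+\tfrac12 \\[2pt] j+\tfrac{n}{2}+\tfrac32,\ \tfrac{n}{2}+2j+2\end{pmatrix};1
$$
multiplied by an explicit prefactor.

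Second, I would check the Pfaff--Saalsch\"utz balancing condition. With $a=N+j+\tfrac{n}{2}+1$, $b=j+\tfrac{n}{2}+\tfrac12$, $c=j+\tfrac{n}{2}+\tfrac32$, $d=\tfrac{n}{2}+2j+2$, $n=p=N-j-1$, one verifies
$$
c+d+n=2j+\tfrac{n}{2}+\tfrac32+2j+\tfrac{n}{2}+2+N-j-1=a+b+1,
$$
so the Pfaff--Saalsch\"utz summation applies and gives the closed form
$$
{}_{3}F_{2}=\frac{(c-a)_{p}(c-b)_{p}}{(c)_{p}(c-a-b)_{p}}
=\frac{(\tfrac12-N)_{p}\,(1)_{p}}{(j+\tfrac{n}{2}+\tfrac32)_{p}\,(-(N+j+\tfrac{n}{2}))_{p}}.
$$

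Third, I would convert the four Pochhammer symbols to Gamma functions and cancel. Writing $(1)_{p}=p!$, $(\tfrac12-N)_{p}=(-1)^{p}\Gamma(N+\tfrac12)/\Gamma(j+\tfrac32)$, $(-(N+j+\tfrac{n}{2}))_{p}=(-1)^{p}\Gamma(N+j+\tfrac{n}{2}+1)/\Gamma(2j+\tfrac{n}{2}+2)$, and $(j+\tfrac{n}{2}+\tfrac32)_{p}=\Gamma(N+\tfrac{n}{2}+\tfrac12)/\Gamma(j+\tfrac{n}{2}+\tfrac32)$, the two signs cancel, several factorials telescope against the prefactor, and after using $\Gamma(j+\tfrac{n}{2}+\tfrac32)=(j+\tfrac{n}{2}+\tfrac12)\Gamma(j+\tfrac{n}{2}+\tfrac12)$ one obtains
$$
S=\frac{\Gamma(N+\tfrac12)\,\Gamma(j+\tfrac{n}{2}+\tfrac12)}{(\tfrac{n}{2}+2j)\,(N-j)!\,\Gamma(j+\tfrac32)\,\Gamma(N+\tfrac{n}{2}+\tfrac12)}.
$$
Finally, expanding the binomial on the right-hand side as $\binom{N+1/2}{N-j}=(N+\tfrac12)\Gamma(N+\tfrac12)/[(N-j)!\,\Gamma(j+\tfrac32)]$ and using $2(N+\tfrac12)=2N+1$, the claimed right-hand side matches the expression for $S$ exactly.

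The only real obstacle is the first, bookkeeping step: one must recognise and confirm the Saalsch\"utzian balancing so that Pfaff--Saalsch\"utz is applicable. Everything else is a careful but routine manipulation of Pochhammer symbols. The simplification
$\binom{N+i+j+n/2}{N-j}\binom{n/2+2j+i}{i}=(N+i+j+n/2)!/[(N-j)!\,i!\,(n/2+2j)!]$, already suggested by the cancellation of $(i+2j+n/2)!$, is what makes the hypergeometric identification clean.
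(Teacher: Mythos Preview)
Your argument is correct. The hypergeometric identification is clean, the Saalsch\"utzian balance checks (despite a minor arithmetic slip in the displayed intermediate step, the equality $c+d+p=a+b+1$ does hold), and the final Pochhammer-to-Gamma conversion matches the claimed right-hand side.

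The paper's proof reaches the same destination by a different-looking but equivalent route: after a Gamma-function manipulation it rewrites the sum as
\[
\sum_{i=0}^{N-1-j} (-1)^{i} \binom{N+i+j+\tfrac{n}{2}}{N-j-1}\binom{N+\tfrac{n}{2}-1}{N-i-j-1}\binom{\tfrac{n}{2}+i+j-\tfrac12}{i},
\]
then flips the last binomial via $\binom{a+i}{i}=(-1)^i\binom{-a-1}{i}$ and applies the triple-binomial identity (5.28) of Graham--Knuth--Patashnik,
\[
\sum_{k=0}^{n}\binom{m-r+s}{k}\binom{n+r-s}{n-k}\binom{r+k}{m+n}=\binom{r}{m}\binom{s}{n}.
\]
That identity is exactly Pfaff--Saalsch\"utz in binomial disguise, so the two proofs invoke the same summation theorem. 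Your hypergeometric packaging has the advantage of being more systematic (one recognises the balance condition and is done), while the paper's binomial route is more self-contained for a reader without the $_{3}F_{2}$ toolkit at hand.
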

\begin{proof} Denote the left hand side by $S$.
Using the identity $\Gamma(A)=\Gamma(A-k){A-1 \choose k} k!$, for
any non-negative integer $k$, and elementary arithmetics one gets
\begin{equation*}
\begin{split}
&\frac{\Gamma(\n+ N + \frac{1}{2})\displaystyle {N+i+j+\n \choose
N-j} {\n+2j+i \choose i} }{
\Gamma (\n + j+ \frac{1}{2}) (i+j+\n+\frac{1}{2})  \prod_{k=0}^{i+1} (\n+2j+k)\; } =\\[2mm]
= & {N+i+j+\n \choose N-j-1}{N+\n-1 \choose N-i-j-1}  {\n+i+j-1
\choose i} \frac{(N-i-j-1)!}{(2j+\n) (N-j)}\;,
\end{split}
\end{equation*}
and so
\begin{equation*}
\begin{split}
S &=  \frac{\Gamma (\n + j+ \frac{1}{2})}{\Gamma(\n+ N + \frac{1}{2}) (2j+\n) (N-j)} \\
&\hspace{2cm} \sum_{i=0}^{N-1-j} (-1)^{i} {N+i+j+\n \choose N-j-1}{N+\n-1 \choose N-i-j-1}{\n+i+j-\frac{1}{2} \choose i}\\[2mm]
& = \hspace{3cm} \left[ \text{because } {a+i\choose i}= (-1)^{i}{-a-1\choose i} \right]\\
&=  \frac{\Gamma (\n + j+ \frac{1}{2})}{\Gamma(\n+ N + \frac{1}{2}) (2j+\n) (N-j)} \\
&\hspace{2cm} \sum_{i=0}^{N-1-j} (-1)^{i} {N+i+j+\n \choose N-j-1}{N+\n-1 \choose N-i-j-1}{-\n-j-\frac{1}{2} \choose i}\\[2mm]
& =  [\text{by the triple-binomial identity (5.28) of  (\cite{GKP}, p. 171), see \eqref{eq74bis} below}]\\
& =   \frac{\Gamma (\n + j+ \frac{1}{2})}{\Gamma(\n+ N +
\frac{1}{2}) (2j+\n) (N-j)}   {N +\n+j \choose 0}
{N-\frac{1}{2} \choose N-j-1} \\[2mm]
& =   \frac{\Gamma (\n + j+ \frac{1}{2})}{\Gamma(\n+ N +
\frac{1}{2}) (2j+\n)} {N+\frac{1}{2} \choose N-j} \frac{2}{2N+1}\,.
\end{split}
\end{equation*}
For the reader's convenience and later reference we state the
triple-binomial identity (5.28) of \cite{GKP}\;:
\begin{equation}\label{eq74bis}
\sum _{k=0}^{n} {m-r+s\choose k}{n+r-s\choose n-k}{r+k\choose m+n} =
{r \choose m}{s \choose n}\qquad m,n\ge 0 \text{ integers}.
\end{equation}

\end{proof}
Our next task is to prove Lemma \ref{L5} and Lemma \ref{L6}. Setting
$\xi= r\,\xi_0$ in \eqref{eq54tris} we obtain

\begin{equation*}
\begin{split}
\frac{\widehat{S_{N}\,\chi_{B}}(r\xi_{0})}{(2\pi)^{n/2}} & = \sum_{L=N+1}^{2N}
\sum_{j=0}^{L-N-1} \sum_{k=0}^{L-N-j-1} c_{\,
L,\,j,\,k}\,\,P_{2j+1}(r\xi_{0})\,\,|r\xi_{0}|^{2(L-N-j-1-k)}
G_{\n+2(L-N)-1-k}(r\xi_{0})\\
& \hspace{0.8 cm} [\text{ make the change of indexes  $L=N+s$ and $|\xi_{0}|=1$} ]\\
& = \sum_{s=1}^{N} \sum_{j=0}^{s-1} \sum_{k=0}^{s-j-1}
c_{\,N+s,\,j,\,k}\,\,P_{2j+1}(\xi_{0})\,\,r^{2(s-k)-1} G_{\n+2s-1-k}(r)\\
& = \sum_{j=0}^{N-1} \sum_{s=j+1}^{N} \sum_{k=0}^{s-j-1}
c_{\,N+s,\,j,\,k}\,\,P_{2j+1}(\xi_{0})\,\,r^{2(s-k)-1} G_{\n+2s-1-k}(r)\\
& := \sum_{p=0}^{\infty} a_{2p+1}^{N}(\xi_{0}) r^{2p+1}.
\end{split}
\end{equation*}
In order to compute the coefficients $ a_{2p+1}^{N}(\xi_{0})$ we
substitute the power series expansion of $G_{q}(r)$ \cite
[B.2]{GrCF}, namely,
\begin{equation}\label{eq75}
G_{q}(r)=\sum _{i=0}^{\infty}\frac{(-1)^{i}}{i!\, \Gamma(q+i+1)}
\frac{r^{2i}}{2^{2i+q}}\,,
\end{equation}
in the last triple sum above.

\begin{proof}[Proof of Lemma \ref{L5}]
We are assuming that $0 \le p \le N-1$\,. It is crucial to remark
that, for this range of $p$, after introducing \eqref{eq75} in the
triple sum above,  only the values of the index $j$ satisfying $0
\le j \le p$ are involved in the expression for $a_{2p+1}^N$. Once
\eqref{eq75} has been introduced in the triple sum one should sum,
in principle, on the four indexes $i,j,s\; \text{and}\, k$\,. But
since we are looking at the coefficient of $r^{2p+1}$ we have the
relation $2(s-k)-1+2i=2p+1$\,,\, which actually leaves us with three
indexes. The range of each of these indexes is easy to determine and
one gets
$$
a_{2p+1}^N  = \sum_{j=0}^{p}P_{2j+1}(\xi_{0}) \sum_{i=0}^{p-j}
\sum_{s=p-i+1}^{N} c_{\,N+s,\,j,\,s-(p-i)-1}
 \times \text{coefficient of $r^{2i}$ from $G_{\n+s+p-i}(r)$}\,.
$$
In view of \eqref{eq75}

\begin{equation*}
\begin{split}
a_{2p+1}^N &= \sum_{j=0}^{p}P_{2j+1}(\xi_{0}) \sum_{i=0}^{p-j}
\sum_{s=p-i+1}^{N} c_{\,N+s,\,j,\,s-(p-i)-1}
\frac{(-1)^{i}}{i! 2^{i+\n+s+p}\Gamma(\n+s+p+1)}\\
& = \hspace{1cm}[\text{by the expression \eqref{eq74} for $c_{L,j,k}$ }]\\[2mm]
& =\imath\, c_{n}  \sum_{j=0}^{p}P_{2j+1}(\xi_{0}) \sum_{i=0}^{p-j}
\sum_{s=p-i+1}^{N}
\frac{(-1)^{i} (-1)^{s-(p-i)-1} \gamma_{2j+1}}{i! 2^{i+\n+s+p}\Gamma(\n+s+p+1)} \\[2mm]
& \qquad \frac{ \displaystyle   2^{s-(p-i)-1} (N-j)! (n-1) {N+s-1+\n \choose N-j} {\n+j+s-1 \choose s-(p-i)-1} {N+\n-\frac{1}{2} \choose N} } {(s+\n -\frac{1}{2})(N-s)!  (p-i-j)! {N-\frac{1}{2} \choose N}}\\[2mm]
& = (-1)^{p+1} c_{n} \frac{{N+\n-\frac{1}{2} \choose N}}{{N-\frac{1}{2} \choose N}}\sum_{j=0}^{p}P_{2j+1}(\xi_{0})  \frac{(-1)^{j} \pi^\n \Gamma(j+\frac{1}{2}) (N-j)!}{ \Gamma(\n+j +\frac{1}{2})2^{\n+2p+1}}\sum_{i=0}^{p-j} \frac{1 }{i! (p-i-j)! }\\
&\hspace{3cm}  \sum_{s=p-i+1}^{N} \frac{(-1)^s\displaystyle
{N+s-1+\n \choose N-j} {\n+j+s-1 \choose s-(p-i)-1} } {(s+\n-
\frac{1}{2})(N-s)! \Gamma(\n+s+p+1)}
\end{split}
\end{equation*}

In Lemma \ref{L15} below we give a useful compact form for the last
sum. Using it we obtain
\begin{equation*}
\begin{split}
\frac{a_{2p+1}^N}{c_{n}} & = (-1)^{p+1} \frac{{N+\n-\frac{1}{2} \choose N}}{{N-\frac{1}{2} \choose N}}\sum_{j=0}^{p}P_{2j+1}(\xi_{0})  \frac{(-1)^{j} \pi^\n \Gamma(j+\frac{1}{2}) (N-j)!}{ \Gamma(\n+j +\frac{1}{2})2^{\n+2p+1}}\sum_{i=0}^{p-j} \frac{1 }{i! (p-i-j)! }\\
&  \frac{ (-1)^{p+1-i}(N-p-1)! (p+1-j)! \Gamma(p+\n-i
+\frac{1}{2})}{(N-j)! \Gamma(N+\n +\frac{1}{2})  \Gamma(\n +
2p+2-i)} {N-\frac{1}{2} \choose N-p-1} {\n+2p-i+1 \choose p+1-j}
\end{split}
\end{equation*}
Easy arithmetics with binomial coefficients gives
$$
\frac{{N+\n-\frac{1}{2} \choose N} (N-p-1)!  {N-\frac{1}{2} \choose
N-p-1}} {{N-\frac{1}{2} \choose N}  \Gamma(N+\n +\frac{1}{2})} =
\frac{ \Gamma(\frac{1}{2})}{ \Gamma(\n +\frac{1}{2})
\Gamma(p+\frac{3}{2})  }
$$
We finally get the extremely surprising identity
\begin{equation}\label{eq78}
\begin{split}
\frac{a_{2p+1}^N}{c_{n}}  & = \frac{ \Gamma(\frac{1}{2}) (\frac{\pi}{2})^{\n} }{2^{2p+1} \Gamma(\n +\frac{1}{2})  \Gamma(p+\frac{3}{2})  }  \sum_{j=0}^{p} \frac{(-1)^j \Gamma(j+ \frac{1}{2})
P_{2j+1}(\xi_{0})} {\Gamma(\n+j+\frac{1}{2}) } \\
&\hspace{3cm} \sum_{i=0}^{p-j} \frac{(-1)^{i} \Gamma(\n +p-i
+\frac{1}{2})}{ i! (p-i-j)! \Gamma(\n +p -i +j+1 )}\; ,
\end{split}
\end{equation}
in which $N$ has miraculously disappeared.  Thus Lemma \ref{L5} is
proved.

% $a_{2p}$.
\end{proof}

\begin{proof}[Proof of Lemma \ref{L6}]
We start by proving the inequality \eqref{eq59bis}, so that $0\le p
\le N-1$\,.  We roughly estimate  $a_{2p+1}= a_{2p+1}^N$ by putting
the absolute value inside the sums in \eqref{eq78}. The absolute
value of each term in the innermost sum in \eqref{eq78} is obviously
not greater than $1$ and there are at most $p+1$ terms. The factor
in front of $P_{2j+1}(\xi_0)$ is again not greater than $1$ in
absolute value. Denoting by $C$ the terms that depend only on $n$ we
obtain the desired inequality \eqref{eq59bis}\,.

We turn now to the proof of inequality \eqref{eq59tris}\,. Recall
that
\begin{equation*}
\begin{split}
& \frac{\widehat{S_{N}\,\chi_{B}}(r\xi_{0})}{(2\pi)^{n/2}}  =  \sum_{p=0}^{\infty} a_{2p+1}^{N}(\xi_{0}) r^{2p+1}\\
& =  \sum_{j=0}^{N-1} \sum_{s=j+1}^{N} \sum_{k=0}^{s-j-1}
c_{\,N+s,\,j,\,k}\,\,P_{2j+1}(\xi_{0})\,\,r^{2(s-k)-1}
G_{\n+2s-1-k}(r)\,.
\end{split}
\end{equation*}
Replacing $G_{\n+2s-1-k}(r)$ by the expression given by \eqref{eq75}
we obtain, as before, a sum with four indexes. Now we eliminate the
index $i$ of  \eqref{eq75} using $s-k+i=p+1$\,. Hence
\begin{equation*}
\begin{split}
a_{2p+1}^N & = \sum_{j=0}^{N-1}P_{2j+1}(\xi_{0}) \sum_{s=j+1}^{N}
\sum_{k=0}^{s-j-1} c_{\,N+s,\,j,\, k}
 \times \text{coefficient of $r^{2(p+1-s+k)}$ from $G_{\n+2s-1-k}(r)$}\\
&=\sum_{j=0}^{N-1}P_{2j+1}(\xi_{0}) \sum_{s=j+1}^{N} \sum_{k=0}^{s-j-1} c_{\,N+s,\,j,\, k}  \frac{(-1)^{p+1-s+k}}{(p+1-s+k)!\Gamma(\n+p+s+1)2^{2p+1+\n+k}}\\
&=\sum_{j=0}^{N-1}P_{2j+1}(\xi_{0}) \sum_{k=0}^{N-1-j} \sum_{s=j+k+1}^{N} c_{\,N+s,\,j,\, k}  \frac{(-1)^{p+1-s+k}}{(p+1-s+k)!\Gamma(\n+p+s+1)2^{2p+1+\n+k}}\\
&= c_{n}(-1)^{p+1} \frac{ {N+\n-\frac{1}{2} \choose N} \pi^{\n} (n-1) }{
4^{p} {N-\frac{1}{2} \choose N} \,
2^{\n+1} }\sum_{j=0}^{N-1}\frac{(-1)^{j} \Gamma(j+\frac{1}{2}) }{\Gamma( \n+ j+\frac{1}{2}) } P_{2j+1}(\xi_{0}) \sum_{k=0}^{N-1-j} \sum_{s=j+k+1}^{N} \\
&\qquad  \frac{(-1)^s (N-j)! \displaystyle{N+s-1+\n\choose
N-j}{\n+j+s-1\choose k}}{(p+1-s+k)! \Gamma(\n+p+s+1)
(s+\n-\frac{1}{2}) (N-s)! (s-j-1-k)!}
\end{split}
\end{equation*}
The second identity is just \eqref{eq75}\,. The third is a change of
the order of summation and the latest follows from the formula
\eqref{eq74} for the constants $c_{l,j,k}$ and some simplifications.

In view of the elementary fact that
\begin{equation*}
\begin{split}
(N-j)! \displaystyle{N+s-1+\n\choose N-j}{\n+j+s-1\choose k} =
\frac{\Gamma(s+\n+N)}{k! \Gamma(s+\n+j-k)}
\end{split}
\end{equation*}
we get

\begin{equation*}
\begin{split}
&\left| \sum_{k=0}^{N-1-j} \sum_{s=j+k+1}^{N}  \frac{(-1)^s (N-j)!
\displaystyle{N+s-1+\n\choose N-j}{\n+j+s-1\choose k}}{(p+1-s+k)!
\Gamma(\n+p+s+1) (s+\n-\frac{1}{2}) (N-s)! (s-j-1-k)!} \right | \le\\
&\le \sum_{k=0}^{N-1-j}
\frac{1}{k!}\sum_{s=j+k+1}^{N}\frac{1}{\Gamma(s+\n+j-k)(p+1-s+k)!
(\n+p+s) (s+\n-\frac{1}{2}) (N-s)! (s-j-1-k)!}\\
&\le \sum_{k=0}^{N-1-j} \frac{1}{k!}\sum_{s=j+k+1}^{N}\frac{1}{
(s-j-1-k)!} \le e^2 \;,
\end{split}
\end{equation*}
where in the first inequality we used that, since $N \le p$\,,
$$
\frac{\Gamma(s+\n+N)}{\Gamma(\n+p+s+1)} \le \frac{1}{\n+p+s}\;.
$$
The proof of \eqref{eq59tris} is complete.
\end{proof}

\begin{lemma}\label{L15}
Let $N-1\ge p\ge j +i \ge 0$ be non-negative integers and set
$m=p+1-i$. Then
\begin{equation*}
\begin{split}
& \sum_{s=0}^{N-m}  \frac{(-1)^s\displaystyle{\n+N+m+s-1 \choose
N-j}{\n+j+m+s-1
\choose s}}{(m+s+\n-\frac{1}{2})(N-m-s)!\Gamma(\n+2m+i+s)} \\[2mm]
= & \frac{(N-m-i)!\, (m+i-j)!\, \Gamma(m+\n-\frac{1}{2})}{(N-j)!\,
\Gamma(\n+2m +i)\, \Gamma(N+\n+\frac{1}{2})} {N-\frac{1}{2} \choose
N-m-i} {\n+2m+i-1 \choose m+i-j}.
\end{split}
\end{equation*}

\end{lemma}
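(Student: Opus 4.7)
The plan is to reduce the identity to a terminating hypergeometric ${}_3F_2$ and evaluate it via the Pfaff--Saalschütz theorem, after first absorbing a parameter through an Euler/Pfaff transformation. A preliminary observation: writing the two binomials in the summand as ratios of Gamma functions produces the clean cancellation
$$
\binom{\n+N+m+s-1}{N-j}\binom{\n+j+m+s-1}{s}
=\frac{\Gamma(\n+N+m+s)}{(N-j)!\,s!\,\Gamma(\n+j+m)},
$$
so the dependence on $j$ factors out of the sum entirely. The inner sum that remains depends only on $N$, $m$, $i$, and $n$, and after using $\frac{1}{s!(N-m-s)!}=\binom{N-m}{s}/(N-m)!$ it takes the form
$$
X:=\sum_{s=0}^{N-m}\binom{N-m}{s}(-1)^{s}\,\frac{\Gamma(\n+N+m+s)}{(m+s+\n-\tfrac12)\,\Gamma(\n+2m+i+s)}.
$$

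Next, I would use the Beta integral $\frac{1}{m+s+\n-\frac12}=\int_{0}^{1}t^{\,m+s+\n-3/2}\,dt$ to clear the reciprocal linear factor. Interchanging the sum and integral and pulling out $\Gamma(\n+N+m)/\Gamma(\n+2m+i)$ identifies the bracketed series as a terminating Gauss hypergeometric:
$$
X=\frac{\Gamma(\n+N+m)}{\Gamma(\n+2m+i)}\int_{0}^{1}t^{\,m+\n-3/2}\,{}_2F_1\bigl(-(N-m),\n+N+m;\,\n+2m+i;\,t\bigr)\,dt.
$$
Now applying Pfaff's transformation
$$
{}_2F_1(-(N-m),\n+N+m;\n+2m+i;t)=(1-t)^{i}\,{}_2F_1(-(N-m-i),\n+N+m+i;\n+2m+i;t)
$$
converts the integrand into $t^{m+\n-3/2}(1-t)^{i}$ times a new terminating ${}_2F_1$. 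Expanding this ${}_2F_1$ and integrating term-by-term via $B(m+\n-\tfrac12+s,\,i+1)$ produces a ${}_3F_2$ at argument $1$ with upper parameters $-(N-m-i),\,\n+N+m+i,\,m+\n-\tfrac12$ and lower parameters $\n+2m+i,\,m+\n+\tfrac12+i$.

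A direct computation shows that this new ${}_3F_2$ is Saalschützian (the sum of lower parameters exceeds the sum of upper parameters by exactly $1$) and terminating, so the Pfaff--Saalschütz theorem gives a closed form as a ratio of Pochhammer symbols. Each of these Pochhammer symbols reduces by standard manipulations to ratios of Gamma functions (the factor $(m-N)_{N-m-i}$ collapses to $(-1)^{N-m-i}(N-m)!/i!$, while $(-\n-N+\tfrac12)_{N-m-i}$ becomes $(-1)^{N-m-i}\Gamma(\n+N+\tfrac12)/\Gamma(\n+m+i+\tfrac12)$, and the remaining two Pochhammers telescope). The cancellations simplify $X$ to
$$
X=\frac{\Gamma(m+\n-\tfrac12)\,(N-m)!\,\Gamma(N+\tfrac12)}{\Gamma(m+i+\tfrac12)\,\Gamma(\n+N+\tfrac12)},
$$
and dividing by the $j$-dependent prefactor $(N-j)!\,\Gamma(\n+j+m)\,(N-m)!$ recovers precisely the right-hand side of the lemma.

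The step I expect to be decisive is the application of Pfaff's transformation in Step~2: the ${}_3F_2$ obtained directly \emph{before} the transformation fails the Saalschütz condition by exactly $i$, so no standard summation theorem applies to it. Pfaff absorbs this mismatch into the factor $(1-t)^{i}$, and the Beta integration then shifts the parameter $i$ into the numerator and denominator of the ${}_3F_2$ in just the right way to restore the Saalschützian balance. Once this is seen, every subsequent manipulation is mechanical, in the same spirit as the proof of Lemma~\ref{sublema2} via the triple-binomial identity \eqref{eq74bis}.
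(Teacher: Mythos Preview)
Your proof is correct, and the route is genuinely different from the paper's. Two minor remarks: the transformation you invoke is Euler's transformation ${}_2F_1(a,b;c;t)=(1-t)^{c-a-b}{}_2F_1(c-a,c-b;c;t)$, not Pfaff's (which changes the argument); and your claimed simplification of $X$ does check, since
\[
X=\frac{\Gamma(m+\tfrac{n}{2}-\tfrac12)\,(N-m)!\,\Gamma(N+\tfrac12)}{\Gamma(m+i+\tfrac12)\,\Gamma(N+\tfrac{n}{2}+\tfrac12)}
\]
divided by $(N-j)!\,\Gamma(\tfrac{n}{2}+j+m)\,(N-m)!$ matches the right-hand side once the two binomials there are expanded as Gamma ratios.

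The paper proceeds quite differently. After the same initial factoring out of the $j$-dependence, it defines a reduced sum $D(m,i)$ and attacks it by an elementary recursion: writing $\frac{1}{m+s+\tfrac{n}{2}-\frac12}=\frac{1}{m+\tfrac{n}{2}-\frac12}\bigl(1-\frac{s}{m+s+\tfrac{n}{2}-\frac12}\bigr)$ and using a Vandermonde-type identity to kill the first piece, it obtains $D(m,i)=\frac{1}{m+\tfrac{n}{2}-\frac12}\,D(m+1,i-1)$, iterates down to $D(p+1,0)$, and evaluates that terminal sum via the triple-binomial identity~\eqref{eq74bis}. Your Euler transformation followed by the Beta integral and Pfaff--Saalsch\"utz accomplishes in one stroke what the paper does by induction on $i$: the factor $(1-t)^{i}$ carries exactly the parameter shift that the paper achieves step by step. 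Your argument is cleaner and more systematic for anyone fluent in hypergeometric identities; the paper's argument is more self-contained, relying only on the concrete binomial identities already used in the proof of Lemma~\ref{sublema2}.
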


\begin{proof} Denote the left hand side by $S$. Using repeatedly the identity $\Gamma(x+1)=x\,\Gamma(x)$ and arithmetics with binomial coefficients we have
\begin{equation*}
\begin{split}
& \frac{\displaystyle{\n+N+m+s-1 \choose N-j}{\n+j+m+s-1\choose s}}{\Gamma(\n+2m+i+s)} \\[2mm]
= & \frac{(N-m-i)!\, (m+i-j)! }{(N-j)!\, s! \, \Gamma(\n+2m+i) }
{\n+N+m+s-1 \choose N-m-i}{\n +2m+i-1 \choose m+i-j}
\end{split}
\end{equation*}
Then
\begin{equation*}
\begin{split}
S &= \frac{(N-m-i)!\, (m+i-j)! }{(N-j)!\, \Gamma(\n+2m+i) } {\n
+2m+i-1 \choose m+i-j} \sum_{s=0}^{N-m}
\frac{ (-1)^{s} {\n+N+m+s-1 \choose N-m-i}}{ s! (N-m-s)! (m+s+\n- \frac{1}{2}) }\\
 &=  \frac{(N-m-i)!\, (m+i-j)! }{(N-j)!\, \Gamma(\n+2m+i) } {\n +2m+i-1 \choose m+i-j} D(m,i) \; ,
\end{split}
\end{equation*}
where the last identity defines $D(m,i)$\,.  The only task left is
the computation of the sum $D(m,i)$. The identity
$$
\frac{1}{m+s+\n-\frac{1}{2}}=\frac{1}{m+\n-\frac{1}{2}}\left(1-\frac{s}{m+s+\n-\frac{1}{2}}\right),
$$
yields the expression
\begin{equation*}
\begin{split}
D(m,i)&
=\frac{1}{(m+\n -\frac{1}{2}) (N-m)!}\sum_{s=0}^{N-m} (-1)^s {N-m \choose s}{\n+N+m+s-1 \choose N-m-i} \\[2mm]
&- \sum_{s=1}^{N-m} \frac{ (-1)^{s} \displaystyle{\n+N+m+s-1 \choose
N-m-i}}{ s! (N-m-s)!  }
\frac{s}{(m+\n- \frac{1}{2})(m+s+\n- \frac{1}{2}) }\;.\\
\end{split}
\end{equation*}
The first sum in the above expression for $D(m,i)$ turns out to
vanish for $i\ge 1$. This is because
\begin{equation*}
\begin{split}
 & \sum_{s=0}^{N-m} (-1)^s {N-m \choose s}{\n+N+m+s-1 \choose N-m-i}  \\
 &= (-1)^{N-m} {N+m+\n -1 \choose -i} = 0 \;,
\end{split}
\end{equation*}
where the first identity follows from \cite [(5.24), p.169]{GKP} and
the second from the fact that ${m \choose n} = 0 $ if $n$ is a
negative integer. Hence, setting $r=s-1$,

\begin{equation*}
\begin{split}
& D(m,i) = -   \sum_{s=1}^{N-m} \frac{ (-1)^{s}
\displaystyle{\n+N+m+s-1 \choose N-m-i}}{ s! (N-m-s)!  }
\frac{s}{(m+\n- \frac{1}{2})(m+s+\n- \frac{1}{2}) }\\[2mm]
 & =\frac{1}{(m+\n- \frac{1}{2}) }      \sum_{r=0}^{N-(m+1)} \frac{ (-1)^{r} \displaystyle{\n+N+(m+1)+r-1 \choose N-(m+1)-(i-1)}}{ r! (N-(m+1)-r)!   ((m+1)+r+\n- \frac{1}{2})} \\
& = \frac{1}{(m+\n- \frac{1}{2}) }\;  D(m+1,i-1)\;.\\
\end{split}
\end{equation*}
Repeating  the above argument $i$ times we obtain that
$$
D(m,i)=\frac{1}{(m+\n -\frac{1}{2})(m+\n+\frac{1}{2})\cdots
(m+\n+i-\frac{3}{2})}\;D(m+i,0).
$$
To compute $D(m+i,0)$ or $D(p+1,0)$ we use the elementary identity
$$
 \frac{ \Gamma( N+\n+\frac{1}{2}) }{\Gamma(p+\n +\frac{1}{2}) s! (N-p-1-s)! (p+s+\n+ \frac{1}{2}) }= {N+\n-\frac{1}{2}\choose N-p-1-s}{p+s+\n-\frac{1}{2}\choose s} ,
$$
from which we get
\begin{equation*}
\begin{split}
D(p+1,0)&=\sum_{s=0}^{N-p-1} \frac{ (-1)^{s} {\n+N+p+s \choose N-p-1}}{ s! (N-p-1-s)! (p+s+\n+ \frac{1}{2}) }   \\
& =\frac{\Gamma(p+\n+\frac{1}{2})}{\Gamma(N+\n+\frac{1}{2})}\sum_{s=0}^{N-p-1} (-1)^s {\n+N+p+s \choose N-p-1} {N+\n-\frac{1}{2}\choose N-p-1-s}{p+s+\n-\frac{1}{2}\choose s}\\
& =\frac{\Gamma(p+\n+\frac{1}{2})}{\Gamma(N+\n+\frac{1}{2})}\sum_{s=0}^{N-p-1} {\n+N+p+s \choose N-p-1} {N+\n-\frac{1}{2}\choose N-p-1-s}{-p-\n-\frac{1}{2}\choose s}\\
&= \frac{\Gamma(p+\n+\frac{1}{2})}{\Gamma(N+\n+\frac{1}{2})}
{N-\frac{1}{2}\choose N-p-1}  ,
\end{split}
\end{equation*}
where in the third identity we applied \cite[(5.14), p. 164]{GKP}
and the latest equality is consequence of the triple-binomial
identity \eqref{eq74bis} \cite[(5.28), p.171]{GKP} (for $k=s$,  $
n=N-p-1, \, m=0, \, r=N+p+\n$ and $s=N-\frac{1}{2})\,.$
Consequently,
\begin{equation*}
\begin{split}
 D(m,i)& =  \frac{\Gamma(m+\n+i -\frac{1}{2})}{\Gamma(N+\n+\frac{1}{2})} {N-\frac{1}{2}\choose N-m-i} \frac{1}{(m+\n -\frac{1}{2})\cdots (m+\n+i-\frac{3}{2})}\\
&=\frac{\Gamma(m+\n-\frac{1}{2})}{\Gamma(N+\n+\frac{1}{2})} {N-\frac{1}{2}\choose N-m-i}   \, ,\\
\end{split}
\end{equation*}
which completes the proof of the lemma\,.
\end{proof}

\section{Failure of the pointwise estimate \eqref{pointwiseH1}} \label{contraejemploHilbert}

In this section we give a proof that \eqref{pointwiseH1} is false,
more direct than the one in \cite{MV}, and we show the connection
with the algebra of operators already mentioned.

\begin{teor} \label{counterHilbert}
 The following pointwise inequality is false for functions in
$L^2(\R)$:
\begin{equation}\label{pointwiseH}
H^*f(x)\le C\, M(Hf)(x)\,, \quad x \in \R\,.
\end{equation}
\end{teor}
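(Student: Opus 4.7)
The argument is by contradiction, exploiting that $H$ is invertible in the algebra $A$. With the paper's convention $Hf(x)=\mathrm{p.v.}\int f(y)/(y-x)\,dy$, one computes $\widehat{Hf}(\xi) = i\pi\,\mathrm{sgn}(\xi)\hat f(\xi)$, hence $H\circ H = -\pi^{2} I$ on $L^{2}(\R)$, and in particular $H^{-1}=-H/\pi^{2}$ belongs to $A$.

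Suppose that $H^{*}f(x)\le C\,M(Hf)(x)$ for all $f\in L^{2}(\R)$ and a.e.\ $x\in\R$. Substituting $f=Hg$ and using $M(-h)=M(h)$ together with $H(Hg)=-\pi^{2}g$, this would be equivalent to
\begin{equation*}
H^{*}(Hg)(x)\le C\pi^{2}\,M(g)(x),\qquad g\in L^{2}(\R),\ \text{a.e. }x\in\R.
\end{equation*}
Since $M$ is of weak type $(1,1)$, the composition $H^{*}\circ H$ would then extend by density to a bounded map $L^{1}(\R)\to L^{1,\infty}(\R)$; it therefore suffices to show that $H^{*}\circ H$ is not of weak type $(1,1)$.

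The natural test is $g=\chi_{[0,1]}$, for which $Hg(y)=\log|(1-y)/y|$ is explicit, with logarithmic singularities at $y=0$ and $y=1$. A direct computation of $H^{\epsilon}(Hg)(x)$ with the truncation $\epsilon$ tied to the distance of $x$ to the singular points isolates an uncancelled contribution of order $(\log|x|)^{2}$---precisely the $L\log L$ fingerprint responsible for \eqref{LlogLH} and the sharpness of \eqref{endpointM^2}. After rescaling and summing disjoint copies (widely separated translates/dilates), one produces a family $\{g_{N}\}\subset L^{1}(\R)$ whose distribution functions grow at the $L\log L$ rate rather than the $L^{1}$ rate, violating $|\{H^{*}(Hg_{N})>t\}|\le C\,\|g_{N}\|_{1}/t$ and providing the required contradiction.

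The main technical obstacle is the last step: the principal-value symmetries in the integrand of $H^{\epsilon}(Hg)$ annihilate the leading log terms, so the super-linear $(\log)^{2}$ growth must be extracted from subleading, asymmetric contributions by choosing the truncation level $\epsilon=\epsilon(x)$ to break this symmetry near $y=0$ and $y=1$. Once this quantitative lower bound on the distribution function of $H^{*}(Hg)$ is in hand, the failure of \eqref{pointwiseH1} follows at once.
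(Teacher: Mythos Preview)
Your reduction is exactly the paper's: replace $f$ by $Hg$, use $H\circ H=-\pi^{2}I$, and conclude that \eqref{pointwiseH} would force $H^{*}\circ H$ to be of weak type $(1,1)$; then take $g=\chi_{(0,1)}$. Up to this point you are on the paper's track.

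The gap is in what follows. Your description of the lower bound is off in two ways. First, the relevant contribution is not of order $(\log|x|)^{2}$, nor does it come from the logarithmic singularities of $Hg$ at $y=0,1$; it comes from the \emph{tail} $Hg(y)=\log\bigl(|y|/|y-1|\bigr)\sim 1/y$ as $|y|\to\infty$. For $x$ large one simply chooses the truncation level $\epsilon=x+m$ (so the domain of integration is $y<-m$ or $y>2x+m$), and the piece over $y<-m$ gives
\[
\int_{m}^{\infty}\frac{1}{x+y}\,\log\!\Bigl(1+\tfrac{1}{y}\Bigr)\,dy
\;\approx\;\int_{m}^{\infty}\frac{dy}{y(x+y)}
\;=\;\frac{1}{x}\log\frac{x+m}{m}
\;\approx\;\frac{\log x}{x}.
\]
There is no delicate symmetry-breaking near $y=0,1$ to perform; that paragraph misidentifies the mechanism.

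Second, the family $\{g_{N}\}$ is unnecessary. The single estimate $H^{*}(H\chi_{(0,1)})(x)\ge C\,(\log x)/x$ for $x>m$ already rules out weak $(1,1)$: with $\varphi(x)=(\log x)/x$ decreasing on $(e,\infty)$ one has
\[
\sup_{\lambda>0}\lambda\,\bigl|\{x>m:\varphi(x)>\lambda\}\bigr|
\ge \sup_{\lambda>0}\lambda\bigl(\varphi^{-1}(\lambda)-m\bigr)
=\lim_{y\to\infty}\varphi(y)\,y=\infty,
\]
so $H^{*}(H\chi_{(0,1)})\notin L^{1,\infty}$ while $\chi_{(0,1)}\in L^{1}$. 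In short: keep your reduction, drop the $(\log)^{2}$ claim and the family construction, and replace them with the direct tail computation above.
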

{\bf{Remark.}} Notice that the Theorem implies that there is no
good-lambda inequality between $H^*(f)$ and $H(f)$.

Replacing $f$ by $H(f)$ in \eqref{pointwiseH} and recalling that
$H(Hf)=-f\,,\,\, f \in L^2(\R)$\,, we see that \eqref{pointwiseH} is
equivalent to saying that
$$H^*(H(f))(x)\le C\, M(f)(x)\,, \quad x \in \R\,,$$
for any $f\in L^2(\R)$\,.

\begin{lemma}
The operator $f \rightarrow H^*(Hf)$  fails to be of weak type
$(1,1)$.
\end{lemma}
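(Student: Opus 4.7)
The strategy is to exhibit a single $f\in L^{1}(\R)$ for which $H^{*}(Hf)$ fails to lie in weak $L^{1}$. Take the simplest candidate $f=\chi_{[0,1]}$, whose Hilbert transform admits the explicit form $Hf(y)=\log\bigl|\tfrac{y}{y-1}\bigr|$: it has integrable logarithmic singularities at $y=0,1$, decays like $1/y$ at infinity, and belongs to $L^{1,\infty}\setminus L^{1}$. Since $H^{2}=-I$ on $L^{2}$, the principal value $H(Hf)(x)$ vanishes off $[0,1]$, but the truncations $H^{\varepsilon}(Hf)(x)$ for well-chosen $\varepsilon$ will not; the whole argument hinges on extracting from them the pointwise lower bound $H^{*}(Hf)(x)\gtrsim (\log x)/x$ for $x$ large.

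For each $x>1$ I pick the \emph{asymmetric} truncation $\varepsilon=x$, so that $|y-x|>\varepsilon$ becomes $y<0$ or $y>2x$; both singularities of $Hf$ and the diagonal $y=x$ are then excluded, and
$$
H^{x}(Hf)(x)\;=\;\int_{-\infty}^{0}\frac{\log|y/(y-1)|}{y-x}\,dy\;+\;\int_{2x}^{\infty}\frac{\log|y/(y-1)|}{y-x}\,dy,
$$
with both integrands nonnegative. Using the asymptotic $\log|y/(y-1)|=1/y+O(1/y^{2})$ and the partial fraction $\frac{1}{y(y-x)}=\frac{1}{x}\bigl(\frac{1}{y-x}-\frac{1}{y}\bigr)$, the right-hand tail integrates to $(\log 2)/x=O(1/x)$, the bounded interval $(-1,0)$ contributes only $O(1/x)$, and the left tail satisfies $\int_{-\infty}^{-1}\frac{dy}{y(y-x)}=\log(x+1)/x$. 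Adding these pieces shows $H^{*}(Hf)(x)\ge H^{x}(Hf)(x)\gtrsim (\log x)/x$ for all $x$ sufficiently large.

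The weak $L^{1}$ failure is then immediate. For small $t>0$ the set $\{x:(\log x)/x>ct\}$ has Lebesgue measure comparable to $t^{-1}\log(1/t)$, so
$$
t\,\bigl|\{x\in\R:H^{*}(Hf)(x)>t\}\bigr|\;\gtrsim\;\log(1/t)\;\longrightarrow\;+\infty\qquad(t\to 0^{+}),
$$
contradicting any inequality of weak type $(1,1)$ for the map $f\mapsto H^{*}(Hf)$.

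The technically delicate point is precisely the asymmetric choice $\varepsilon=x$. The naive symmetric-looking truncation $\varepsilon=x/2$ would retain the interval $(1,x/2)$, whose contribution, computed identically by partial fractions, equals $-\log(x-1)/x$ and \emph{exactly cancels} the $\log(x+1)/x$ coming from the left tail, leaving only an $O(1/x)$ remainder consistent with weak $L^{1}$. By taking $\varepsilon=x$ we excise this cancelling middle piece entirely, and the logarithmic term coming from $(-\infty,-1)$ survives to force the blow-up; verifying that all other regions contribute only $O(1/x)$ is routine but careful bookkeeping.
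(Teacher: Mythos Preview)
Your proof is correct and follows essentially the same route as the paper: test on $f=\chi_{(0,1)}$, choose a truncation level that pushes the integration away from $[0,1]$, and show that the surviving left tail contributes $\asymp(\log x)/x$, which is not in weak $L^{1}$. The only cosmetic difference is the truncation level---the paper takes $\varepsilon=x+m$ for a fixed large $m$ (so that the asymptotic $\log(1+1/y)\approx 1/y$ applies directly on the whole range), whereas you take $\varepsilon=x$ and split off the bounded piece $(-1,0)$ by hand; your closing remark explaining why a smaller truncation such as $\varepsilon=x/2$ would produce an exact cancellation is a nice addition not spelled out in the paper.
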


\begin{proof}
To prove the Lemma it is enough to show that if $f=\chi_{(0,1)}$,
then there are positive constants $m$ and $C$ such that whenever
$x>m$,
\begin{equation}\label{log}
 H^*(Hf)(x)\ge C\,\frac{\log x}{x}
\end{equation}
Indeed, choosing $m>e$ if necessary, we have
$$ \sup_{\lambda>0} \lambda \, |\{ x\in \R: H^*(Hf)(x) >
\lambda \} | \ge \sup_{\lambda>0} \lambda \, |\{ x > m: \frac{\log
x}{x}
> C^{-1}\, \lambda \} |
$$
$$
= C\, \sup_{\lambda>0} \lambda \, | \{ x >m  :  \frac{\log x }{x}
> \lambda \} | \ge C\, \sup_{\lambda>0} \lambda \,(
\varphi^{-1}(\lambda) - m),
$$
where $\varphi$ is the decreasing function $\varphi :(e,\infty)
\rightarrow  (0,e^{-1})$,  given by $\varphi(x) = \frac{\log x
}{x}$. To conclude observe that the right hand side of the
estimate is unbounded as $\lambda \rightarrow 0$:
$$
\lim_{\lambda \rightarrow 0}      \lambda \varphi^{-1}(\lambda) =
\lim_{\lambda \rightarrow \infty} \varphi(\lambda) \lambda = \infty.
$$

To prove \eqref{log} we recall that for $f=\chi_{(0,1)}$
$$Hf(y)
=\log\frac{|y|}{|y-1|}.
$$

Let $m>1$ big enough that will be chosen soon. Take $x>m$.  Hence,
by definition,
$$H^*(Hf)(x)
\ge   \left| \int_{ |y-x|> m+x } \frac{1}{y-x}\,
\log\frac{|y|}{|y-1|} \,dy \right|
$$
and splitting the integral in the obvious way
$$\int_{-\infty}^{-m} \frac{1}{y-x}\log\frac{-y}{-y+1} \,dy +
\int_{2x+m}^{\infty} \frac{1}{y-x}\log\frac{y}{y-1}\,dy
$$
$$=
\int_{m}^{\infty} \frac{1}{x+y}\log\frac{y+1}{y} \,dy +
\int_{2x+m}^{\infty} \frac{1}{y-x}\log\frac{y}{y-1}\,dy =A(x)+B(x),
$$
where  both $A(x), B(x)$ are positive.  Hence
$$H^*(Hf)(x)
\ge A(x).
$$
Since $$\log(1+\frac1y) \approx \frac1y $$ as $y \rightarrow
\infty$, there is a constant $m>1$ such that whenever $y>m$
$$ \frac12<\frac{\log(1+\frac1y) }{\frac1y} < \frac32.$$
Hence, with this constant $m$ we have
$$ A(x) = \int_{m}^{\infty} \frac{1}{x+y}\log\left(1+\frac{1}{y}
\right)\,dy \approx \int_{m}^{\infty} \frac{1}{x+y} \,\frac{dy}{y}
= \frac1x \log \frac{y}{x+y}\Big|_{m}^{\infty} \approx \frac{\log
x}{x}\,,
$$
which proves \eqref{log}.

Notice that $B$ is better behaved :
$$ B(x) \le  \int_{2x+m}^{\infty} \frac{1}{y-x}\log\frac{y}{y-1} \,dy
\le \int_{2x+m}^{\infty} \frac2y \,\frac{dy}{y}
 \le \frac1x.
$$
\end{proof}

\section{Composition of operators : positive results } \label{compostionestimates}

We first discuss a proof of \eqref{fslogl} in Lemma \ref{pointwise}
using standard arguments except for a point that will be supplied.
We mention that in \cite{Le1} there is a different argument.

 Let $x\in \Rn$ and let
$Q=Q(x,r)$ be an arbitrary cube centered at $x$ and sidelength $r$.
It is enough to show that there exists $C>0$ such that for some
constant $c=c_{Q}$
\begin{equation}\label{1lemma}
\frac{1}{|Q|} \int_{Q} |Tf(y)- c| \,dy  \le C\, Mf(x).
\end{equation}

Let $f = f_{1}+f_{2}$, where $f_{1} = f\,  \chi_{2Q}$. If we pick
$c =   (T (f_{2}) ) _{Q} $ , we can estimate the left hand side of
\eqref{1lemma} by a multiple of
$$ \frac{1}{|Q|} \int_{Q} |T(f_1)(y)| \,dy
+ \frac{1}{|Q|} \int_{Q}  |T(f_{2}) - ( T(f_{2}) )_{Q}| \,dy = I +
II.
$$
To take care of II we use the regularity of the kernel in a standard
way as in \cite[p.~153]{GrMF}\,. We omit the details. Hence we have
$$
II \le C\, Mf(x).
$$
To control $I$ we use \eqref{localstrongcoifman-fefferman}. Hence,
since the support of $f_1$ is contained in $2Q$ we have
$$I\, \le \frac{C}{|Q|} \int_{4Q} |T(f_{1})(y)|\,dy \leq
\frac{C}{|Q|} \int_{4Q} M(f_{1})(y)\,dy \le C\, \frac{C}{|4Q|}
\int_{4Q} M(f)(y)\,dy \le C\, M^2(f)(x).
$$

\begin{proof}[Proof of Theorem \ref{teoremamaximal}]

To prove a) we use part a) of Coifman-Fefferman's Theorem
\ref{CoifFefferman} and part a)  Fefferman-Stein's Theorem
\ref{FeffermanStein}:
$$
\int_{\Rn} \Big(T^*_1\circ T_2(f)(x)\Big)^{p}\, w \le \int_{\Rn}
\Big(M\circ T_2f(x)\Big)^{p}\, w(x)dx
$$
$$
\le C\, \int_{\Rn} \Big(M^{\#}\circ T_2f(x)\Big)^{p}\, w(x)dx \le
C\, \int_{\Rn} (M^{2}f)^{p}\, w
$$
where in the last estimate we have used \eqref{fslogl} in Lemma
\ref{pointwise}\,.  This yields \eqref{coiffeffstrongM2} and
concludes the proof of the first part of the theorem.

To prove \eqref{coiffeffweakM2} we use similar arguments except that
we use part b) of both Theorems \ref{CoifFefferman} and
\ref{FeffermanStein}:
$$\sup_{ t >0}\frac{1}{ \Phi (\frac{1}{t}) }w(\{y\in \Rn\,:\,
|T^*_1\circ T_2f|>t \})\leq \sup_{ t >0}\frac{1}{ \Phi (\frac{1}{t})
} w( \{ y\in\mathbb R ^{n}: M(T_2f)(y) > t \} ).
$$
$$\leq \sup_{ t >0}\frac{1}{ \Phi (\frac{1}{t}) }w(\{y\in \Rn\,:\,
M^{\#}(T_2\textbf{}f)(y)>t \})\leq \sup_{ t >0}\frac{1}{ \Phi
(\frac{1}{t}) } w( \{ y\in\mathbb R ^{n}: M^2(f)(y) > t \} ).
$$

To prove b) in Theorem 2 we use a similar argument. The main
difference is that we use first Cotlar's improved estimate from
Theorem \ref{CotlarPuntual}. Indeed, this is used after an
application of Theorem \ref{CoifFefferman} of Coifman and Fefferman:
$$
\int_{\Rn} \Big(T^*_1\circ T^*_2(f)(x)\Big)^{p}\, w \le \int_{\Rn}
\Big(M\circ T^*_2f(x)\Big)^{p}\, w(x)dx
$$
$$
\leq \int_{\Rn} \Big(M\circ M_{\delta}\circ T_2f(x)\Big)^{p}\,
w(x)dx + \int_{\Rn} M^2f(x)^{p}\, w(x)dx= I+II.
$$
We just need to control $I$. For this we remark that
\begin{equation}\label{pointMdelta}
M\circ M_{\delta}f \leq c_{\delta} Mf(x). \end{equation}
Hence by  Fefferman-Stein's theorem \ref{FeffermanStein}
$$
I\leq C_{\delta}\,\int_{\Rn} \Big(M\circ T_2f(x)\Big)^{p}\, w(x)dx
\leq C_{\delta}\,\int_{\Rn} \Big(M^{\#}\circ T_2f(x)\Big)^{p}\,
w(x)dx
$$
$$
\le C\, \int_{\Rn} \Big(M^{2}f(x)\Big)^{p}\, w(x)dx
$$
where in the last estimate we have used \eqref{fslogl} from Lemma
\ref{pointwise}.

We are left with the proof of \eqref{pointMdelta}. Let $x\in \Rn$
and let $Q=Q(x,r)$ be an arbitrary cube centered at $x$ with
sidelength $r$. We have to show that
\begin{equation*}
\frac{1}{|Q|} \int_{Q} M_{\delta}f(y) \,dy  \le C\, Mf(x).
\end{equation*}
Let $f = f_{1}+f_{2}$, where $f_{1} = f\,  \chi_{2Q}\,.$ \,  We can
estimate the left hand side by a multiple of
$$ \frac{1}{|Q|} \int_{Q} M_{\delta}f_1(y) \,dy
+ \frac{1}{|Q|} \int_{Q} M_{\delta}f_2(y) \,dy = I + II.
$$
To take care of II we use that it is roughly constant on $Q$ by
\cite[p.~299]{GrMF}. Hence we have
$$
II \le C\,M_{\delta}f(x) \leq C\,Mf(x).
$$
To control $I$ we use that $\delta<1$ and that the maximal operator
is bounded on $L^{1/\delta}(\Rn)\,.$  We obtain
$$I\, \le \frac{C_{\delta}}{|Q|} \int_{2Q} |f(y)|\,dy \leq C\, M(f)(x).
$$
This concludes the proof of the first part of b) of Theorem
\ref{teoremamaximal}. The proof of the second part is similar to the
proof of part a)\,, except for the fact that one uses the method we
have just described. We leave the details to the interested reader.

\end{proof}

\begin{proof}[Proof of Corollary \ref{coroLlogLCZO}]

By homogeneity it is enough to assume $t=1$ and hence we just need
to prove
$$w (\{ y\in\Rn: |T^*_1\circ T_2f(y)|> 1 \})\leq C\, \int_{\mathbb
R^n} \Phi (|f(y)|)w(y)dy.$$

Now, $\Phi=t(\log(e+t))\approx t(1+\log^{+}t)$ is submultiplicative,
that is, $\Phi(ab)\le \Phi(a)\,\Phi(b)$, $a,b\geq 0$. In particular,
$\Phi$ is doubling. We have by Theorem \ref{teoremamaximal} and
\eqref{endpointM^2}
\begin{eqnarray*}
w( \{ y\in\mathbb R ^{n}:|T^*_1\circ T_2f(y)|> 1 \})&\leq& C\,
\sup_{t>0}\frac{1}{
\Phi(\frac{1}{t}) }w (\{ y\in\mathbb R ^{n}: |T^*_1\circ T_2f(y)|> t \})\\
&\leq &C\, \sup_{t>0}\frac{1}{
\Phi(\frac{1}{t}) }w (\{ y\in\mathbb R ^{n}:  M^2 f (y)> t \})\\
&\leq &C\, \sup_{t>0}\frac{1}{ \Phi(\frac{1}{t}) }\int_{\mathbb R^n}
\Phi (\frac{|f(y)|}{t})w(y)dy\\ &\leq &C\, \sup_{t>0}\frac{1}{
\Phi(\frac{1}{t}) }\int_{\mathbb R^n} \Phi (|f(y)|)\Phi
(\frac{1}{t})w(y)dy\\ &= &C\, \int_{\mathbb R^n} \Phi
(|f(y)|)w(y)dy\,,
\end{eqnarray*}
which completes the proof.

\end{proof}

\begin{proof}[Proof of inequality \eqref{compostionBeurlingpointwise}]

It is enough by translation invariance to consider $z=0$ in
\eqref{compostionBeurlingpointwise}, that is,
$$B^*(B(f))(0)\leq C\,(B^2)^*(f)(0) + C\,M(f)(0).
$$
Recall that
$$
B^{*}f(z)= \sup_{\epsilon > 0} | B_{\epsilon}f(z)|, \quad z \in \C
\,,
$$
with
$$
B_{\epsilon}f(z)= \int_{| w-z| > \epsilon} f(z-w) \frac{1}{w^2}
\,dw\, .
$$
To prove the inequality at $0$ we use that (see \cite{MV}) for any
$h$
$$B^*(h)= \widetilde{M} (Bh)\,,
$$
where
$$ \widetilde{M} (g)(z)= \sup_{\epsilon>0}\Big|\frac{1}{\pi
\epsilon^2}\int_{D(z,\epsilon)} g(w)\,dA(w)\Big|.
$$
Hence, it is enough to show that
\begin{equation*}\label{claim}
\widetilde{M} (B^2f)(0)\leq C\,(B^2)^*(f)(0) + C\,M(f)(0).
\end{equation*}
By dilation invariance is enough to estimate the integral of
$B^2f$ on the unit disc $D$\,. Clearly
$$
\int_{D} B^2f(w)\,dA(w)= \int f(w)\,B^2(\chi_D)(w)\,dA(w)\,,
$$
and so we need to compute $B^2(\chi_D)$\,. For this we use the basic
property of $B$, namely
$$\frac{\partial}{\partial z} \varphi = -\frac{1}{\pi}
B(\frac{\partial \varphi}{\partial \overline{z}})\,,$$
which holds
for appropriate classes of functions $\varphi$\,.

Integrating the function $\chi_D(z)$ in $\overline{z}$ one gets the
function
$$\varphi(z) = \overline{z}\chi_D(z) + \frac{1}{z} \chi_{D^c}(z) \,,$$
and so
 $$-\frac{1}{\pi} B(\chi_D)(z) = \frac{\partial\varphi}{\partial z} = -
\frac{1}{z^2}\chi_{D^c}(z)\,.$$ Following the same strategy for the
function $-\frac{1}{z^2}\chi_{D^c}(z)$ we get

$$ - \frac{1}{\pi} B^2(\chi_D)(z)=
(-2\frac{\bar{z}}{z^3}+\frac{3}{z^4})\chi_{D^c}$$
and so
$$
- \frac{1}{\pi} \int f(w)\,B^2(\chi_D)(w)\,dA(w) =-2\int_{D^c}
f(w)\,\frac{\bar{w}}{w^3}\,dA(w)+ 3\int_{D^c}
f(w)\,\frac{1}{w^4}\,dA(w).
$$
Last term is bounded by a multiple of $Mf(0)$ since, after putting
the  absolute value inside the integral, one is convolving with a
non-negative decreasing integrable kernel. Alternatively one may
just integrate in dyadic annuli centered at $0\,.$ For the first
term we simply observe that the function $ -2 \frac{\bar{w}}{w^3}$
is the kernel of $B^2$ and hence
$$
 -2 \,\int_{D^c} f(w)\,\frac{\bar{w}}{w^3}\,dA(w)
$$
is the truncation at level $1$ of $B^2f(0)$ (see the definition just
after \eqref{eq2})\,.
\end{proof}

\begin{proof}[Proof of Theorem \ref{higherorderRieszTransforms}]

By dilating and translating it is enough to prove that
$$
|R^1(T(f))(0)|\le C(|S^2f(0)|+ Mf(0)),
$$
where $R^1$ and $S^2$ are the truncations of $R$ and $S$ at levels
$1$ and $2$ respectively (see the definition just after
\eqref{eq2}).

Denote by $K_0, K$ and $K_1$  the kernels of $R$\,,\,\,$ T$ and
$S$ respectively. Let $B$ be the unit ball of $\Rn$\,. It was
shown in \cite{MOV} that, because $R$ is an even  higher order
Riesz transform, its kernel off the unit ball is in the range of
$R$. More precisely,  there exists a polynomial $b$ such that
$$K_0(y)\chi_{B^c}(y)=R(b\chi_{B})(y)\,, \quad y \in \Rn \,.$$
Thus, since $R \circ T = S + c \,I$ \,,
\begin{equation*}
\begin{split}
 R^1(Tf(0)) & = \int_{|y|\ge 1}K_0(y)Tf(y)dy
  \\
 & = \int R(b\chi_{B})(y)Tf(y)dy\\
 & = \int b(y)\chi_{B}(y)Sf(y)dy +c\int b(y)\chi_{B}(y)f(y)dy\\
 & = I+II.
\end{split}
\end{equation*}
Clearly, $II$ is bounded by $C\|b\|_{L^\infty(B)}\,(Mf)(0).$
On the
other hand,
\begin{equation*}
\begin{split}
I & = \int S(b\chi_{B})(y)f(y)dy  \\
& = \int_{2B} S(b\chi_{B})(y)f(y)dy + \int_{(2B)^c}
S(b\chi_{B})(y)f(y)dy \\
& =III + IV.
\end{split}
\end{equation*}
Using Lemma 5 in \cite{MOV} we get that  $III$ is bounded by
$$C Mf(0)
(\|b\|_{L^\infty(B)}+\|b\|_{\operatorname{Lip}(1,B)})\,,
$$
where $\|b \|_{\operatorname{Lip}(1,B)}$ is the Lipschitz semi-norm
of $b$ on $B$\,. Since the kernel $K_1$ of $S$ is smooth off the
origin we have
$$
S(b\chi_{B})(y)= K_1(y)\int b\chi_{B}+
\|b\|_{L^\infty(B)}\,O(\frac{1}{|y|^{n+1}})\,, \quad |y|> 2\,.
$$
Thus,
\begin{equation*}
\begin{split}
|IV| &\le C\|b\|_{\infty} | \int_{(2B)^c} K_1(y)f(y)dy |+ C
\|b\|_{\infty}
\int_{(2B)^c}\frac{|f(y)|}{|y|^{n+1}}dy \\
 & \le C\|b\|_{\infty}(|S^2f(0)|+Mf(0))\,,
 \end{split}
\end{equation*}
which completes the proof.

\end{proof}
%\end{document}

\begin{gracies}
 The first, second and fourth authors were partially supported by grants 2009SGR420
(Generalitat de Catalunya) and  MTM2007-60062 (Spanish Ministry of
Science). The third author was partially supported by grants
FQM-1509  (Junta de Andaluc\'{\i}a) and MTM2006-05622 (Spanish Ministry
of Science).

This paper was completed during a research programme in Analysis
held in the spring of 2009 at the ``Centre de Recerca Matem\`{a}tica" in
Barcelona. The authors are grateful to the CRM staff for the fine
organization of this event.
\end{gracies}

\begin{tabular}{l}
Joan Mateu\\
Departament de Matem\`{a}tiques\\
Universitat Aut\`{o}noma de Barcelona\\
08193 Bellaterra, Barcelona, Catalonia\\
{\it E-mail:} {\tt mateu@mat.uab.cat}\\ \\
Joan Orobitg\\
Departament de Matem\`{a}tiques\\
Universitat Aut\`{o}noma de Barcelona\\
08193 Bellaterra, Barcelona, Catalonia\\
{\it E-mail:} {\tt orobitg@mat.uab.cat}\\ \\
Carlos P\'{e}rez\\
Departamento de An\'{a}lisis Matem\'{a}tico \\
 Universidad de Sevilla\\
 41012 Sevilla, Spain\\
{\it E-mail:} {\tt carlosperez@us.es}\\ \\
Joan Verdera\\
Departament de Matem\`{a}tiques\\
Universitat Aut\`{o}noma de Barcelona\\
08193 Bellaterra, Barcelona, Catalonia\\
{\it E-mail:} {\tt jvm@mat.uab.cat}
\end{tabular}

\end{document}